\documentclass[a4paper,reqno]{amsart}
\usepackage[utf8]{inputenc}
\usepackage[T1]{fontenc}
\usepackage[british]{babel}
\usepackage{amsmath,amssymb}
\usepackage{graphicx}
\usepackage[all]{xy}
\usepackage{extarrows}
\usepackage{mathbbol}
\usepackage{mathabx}
\usepackage{enumerate}

\newdir{>>}{{}*!/3.5pt/:(1,-.2)@^{>}*!/3.5pt/:(1,+.2)@_{>}*!/7pt/:(1,-.2)@^{>}*!/7pt/:(1,+.2)@_{>}}
\newdir{ >>}{{}*!/8pt/@{|}*!/3.5pt/:(1,-.2)@^{>}*!/3.5pt/:(1,+.2)@_{>}}
\newdir{ |>}{{}*!/-3.5pt/@{|}*!/-8pt/:(1,-.2)@^{>}*!/-8pt/:(1,+.2)@_{>}}
\newdir{ >}{{}*!/-8pt/@{>}}
\newdir{>}{{}*:(1,-.2)@^{>}*:(1,+.2)@_{>}}
\newdir{<}{{}*:(1,+.2)@^{<}*:(1,-.2)@_{<}}

\theoremstyle{plain}

\newtheorem{theorem}[subsection]{Theorem}
\newtheorem{lemma}[subsection]{Lemma}
\newtheorem{proposition}[subsection]{Proposition}
\newtheorem{corollary}[subsection]{Corollary}

\theoremstyle{definition}
\newtheorem{definition}[subsection]{Definition}
\newtheorem{remark}[subsection]{Remark}

\newtheorem{example}[subsection]{Example}

\newtheorem{construction}[subsection]{Construction}

\newenvironment{tfae}
{
\begin{enumerate}}
{\end{enumerate}}

\numberwithin{figure}{section}

\newcommand{\pushoutcorner}[1][dr]{\save*!/#1+1.2pc/#1:(1,-1)@^{|-}\restore}
\newcommand{\pullbackcorner}[1][dr]{\save*!/#1-1.2pc/#1:(-1,1)@^{|-}\restore}

\renewcommand{\binom}{\genfrac{\lgroup}{\rgroup}{0pt}{1}}
\newcommand{\pushout}{\binom}
\newcommand{\morph}[2]{\bigl(\begin{smallmatrix}
	#1 \\ #2
\end{smallmatrix}\bigr)}

\renewcommand{\iff}{\Leftrightarrow}
\newcommand{\comp}{\raisebox{0.2mm}{\ensuremath{\scriptstyle{\circ}}}}
\newcommand{\LieAlg}{\mathbf{Lie}_R}
\newcommand{\Lie}{\mathit{Lie}}

\newcommand{\Coeq}{\mathit{Coeq}}
\newcommand{\Grp}{\mathbf{Grp}}
\newcommand{\PreXMod}{\mathbf{PreXMod}}
\newcommand{\XMod}{\mathbf{XMod}}

\newcommand{\Pt}{\mathbf{Pt}}
\newcommand{\Act}{\mathbf{Act}}

\newcommand{\Nil}{\mathbf{Nil}}
\newcommand{\nil}{\mathit{Nil}}

\newcommand{\Grpd}{\mathbf{Grpd}}
\newcommand{\RG}{\mathbf{RG}}
\newcommand{\XSqr}{\mathbf{XSqr}}
\newcommand{\cat}{\mathbf{cat}}
\newcommand{\A}{\mathbb{A}}

\newcommand{\Z}{\mathbb{Z}}

\newcommand{\Qq}{\mathcal{Q}}

\newcommand{\SH}{{\rm (SH)}}
\newcommand{\X}[1]{{\rm (X.#1)}}
\newcommand{\W}[1]{{\rm (W.#1)}}

\title[The non-abelian tensor product]{An intrinsic approach to the non-abelian tensor product via internal crossed squares}

\author{Davide di Micco}
\address[Davide di Micco]{Università degli Studi di Milano, Via Saldini 50, 20133 Milano, Italy}
\email[Davide di Micco]{davide.dimicco@unimi.it}

\author{Tim Van~der Linden}
\address[Tim Van~der Linden]{Institut de
Recherche en Math\'ematique et Physique, Universit\'e catholique
de Louvain, che\-min du cyclotron~2 bte~L7.01.02, B--1348
Louvain-la-Neuve, Belgium}
\email[Tim Van~der Linden]{tim.vanderlinden@uclouvain.be}

\thanks{The second author is a Research
Associate of the Fonds de la Recherche Scientifique--FNRS}

\keywords{Semi-abelian category, pair of compatible actions, internal action, crossed module, crossed square, commutator, non-abelian tensor product}

\subjclass[2020]{18D40, 18E13, 20J15}

\begin{document}

\begin{abstract}
We explain how, in the context of a semi-abelian category, the concept of an \emph{internal crossed square} may be used to set up an intrinsic approach to the Brown-Loday \emph{non-abelian tensor product}.
\end{abstract}

\maketitle

\section{Introduction}
The aim of this article is to explain how, in the context of a semi-abelian category~\cite{JMT02,BB04}, \emph{internal crossed squares} can be used to set up an intrinsic approach to the \emph{non-abelian tensor product}. Both concepts were originally introduced for groups (by Guin-Wal\'ery, Brown and Loday, in \cite{GWL80,Lod82,BL87}) and for Lie algebras (by Ellis, in \cite{Ell91}). 

Recall that a \emph{crossed module} is a group homomorphism $\partial\colon M\to L$ together with an action of $L$ on $M$, satisfying suitable compatibility conditions. The category $\XMod$ of crossed modules is equivalent to the category $\Grpd(\Grp)$ of internal groupoids in the category of groups, via the following constructions. The normalisation functor sends an internal groupoid (of which we only depict the underlying reflexive graph)
\begin{align*}
\xymatrix{
X \ar@<6pt>[r]^-{d} \ar@<-4pt>[r]_-{c} & L \ar@<-1pt>[l]|-{e}
}
\end{align*}
to the map $c\comp k_d\colon K_d\to L$ (where $k_d\colon {K_d}\to X$ denotes the kernel of $d$), together with the action $\xi$ determined by conjugation in $X$. Its pseudo-inverse sends a crossed module $(\partial\colon M\to L,\xi)$ to the semidirect product $M\rtimes_\xi L$ with the suitable projections on~$L$.

A \emph{crossed square} (of groups) is a \emph{two-dimensional crossed module}, in the following precise sense. The internal groupoid construction may be repeated, which gives us the category $\Grpd^2(\Grp)=\Grpd(\Grpd(\Grp))$ of internal \emph{double} groupoids in~$\Grp$. Given such an internal double groupoid as in Figure~\ref{Star}, 
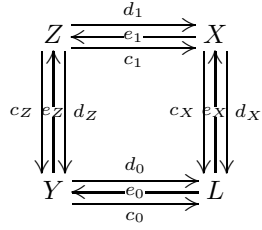
\begin{figure}
$\vcenter{
\xymatrix@=2em{
Z \ar@<1ex>[dd]^-{d_Z} \ar@<-1ex>[dd]_-{c_Z} \ar@<1ex>[rr]^-{d_1} \ar@<-1ex>[rr]_-{c_1} && X \ar[ll]|-{e_1} \ar@<1ex>[dd]^-{d_X} \ar@<-1ex>[dd]_-{c_X}\\
\\
Y \ar[uu]|-{e_Z} \ar@<1ex>[rr]^-{d_0} \ar@<-1ex>[rr]_-{c_0} && L \ar[uu]|-{e_X} \ar[ll]|-{e_0}
}
}$
\caption{An internal double groupoid, viewed as a double reflexive graph}\label{Star}
\end{figure}
viewed as a diagram in $\Grp$ (in which again the composition maps are omitted), we may take the normalisation functor vertically and horizontally to obtain a commutative square
\begin{equation}\label{Diag:Sq}
\vcenter{\xymatrix{P \ar[r]^{p_M} \ar[d]_{p_N} & M\ar[d]^\mu\\
N \ar[r]_{\nu} & L.}}
\end{equation}
The given double groupoid structure naturally induces actions of~$L$ on~$M$, $P$ and~$N$, of $M$ and~$N$ on~$P$, etc. One may now ask, whether it is possible to equip a given commutative square of group homomorphisms with suitable actions (and, possibly, additional maps), in such a way that an internal double groupoid may be recovered---thus extending the equivalence $\XMod\simeq \Grpd(\Grp)$ in order to capture double groupoids in~$\Grp$ as commutative squares with extra structure. The concept of a crossed square (\cite{GWL80,Lod82,BL87}, see Definition~\ref{defi:explicit defi of crossed square of groups} below) answers this question, and does indeed give rise to a category equivalence $\XSqr\simeq \Grpd^2(\Grp)$.

\emph{Internal crossed squares} answer the same question, now asked for a general base category $\A$, which we take to be semi-abelian (in the sense of~\cite{JMT02}; for the sake of simplicity, we ask our categories to satisfy a relatively mild extra condition called the \emph{Smith is Huq} condition \SH\ in~\cite{MFVdL12}---see Section~\ref{Sec:Actions} and Section~\ref{Sec:Xmod} for detailed explanations). The work of Janelidze~\cite{Jan03} provides an explicit description of internal crossed modules in~$\A$, together with an equivalence of categories $\XMod(\A)\simeq \Grpd(\A)$ which reduces to $\XMod\simeq\Grpd(\Grp)$ when $\A=\Grp$. Since the category of internal groupoids in a semi-abelian category is again semi-abelian~\cite{Bourn-Gran}, the category of internal crossed modules is semi-abelian as well. Hence this construction may be repeated as in~\cite{EG-honfg}, and thus we obtain an equivalence $\XMod^2(\A)\simeq \Grpd^2(\A)$. We may now write $\XSqr(\A)= \XMod^2(\A)$ and say that a \emph{crossed square in $\A$} is an internal crossed module of internal crossed modules in~$\A$. Indeed, any such double internal crossed module has an underlying commutative square in~$\A$, which the crossed module structures equip with suitable internal actions in such a way that an internal double groupoid may be recovered. The internal action structure is, however, far from being transparent, and thus merits further explicitation.

Yet, we shall see that even this tentative and very abstract general definition is concrete enough to serve as a basis for an intrinsic approach to the \emph{non-abelian tensor product}. Originally this tensor product (of two groups $M$ and $N$ acting on each other in a certain \lq\lq compatible\rq\rq\ way) was defined in~\cite{BL87} via a presentation in terms of generators and relations. In the article \cite{dMVdL19} we investigated how to extend the concept of a pair of compatible actions (of two given objects $M$ and $N$ acting on each other) to the semi-abelian setting. A~key feature (known already for groups and Lie algebras) of such a pair of compatible actions is that it is equivalent to the datum of a third object $L$ and two internal crossed module structures $\mu\colon M\to L$ and $\nu\colon N\to L$. According to another result of Brown and Loday~\cite{BL87}, given two $L$-crossed modules $\mu$ and $\nu$, the crossed module $\mu\comp p_M=\nu\comp p_N\colon P\to L$ in a crossed square of groups of the form~\eqref{Diag:Sq} happens to be the tensor product of $M$ and $N$ with respect to the actions of $M$ and $N$ on each other, induced by the crossed module structures of $\mu\colon M\to L$ and $\nu\colon N\to L$, if and only if the crossed square is the initial object in the category of all crossed squares over the given crossed modules $\mu $ and $\nu$. This property of course determines the tensor product, and it may actually be taken as a definition.

Concretely this means that in a semi-abelian category (satisfying the condition \SH), the non-abelian tensor product of two objects acting compatibly on one another may be constructed as follows.
\begin{enumerate}
\item Consider the internal $L$-crossed modules $\mu\colon M\to L$ and $\nu\colon N\to L$ corresponding to the given actions.
\item Use the equivalence $\XMod(\A)\simeq \Grpd(\A)$ to obtain internal groupoids
\begin{equation*}
\vcenter{
\xymatrix{
Y \ar@<1ex>[rr]^-{d_0} \ar@<-1ex>[rr]_-{c_0} && L \ar[rr]|-{e_X} \ar[ll]|-{e_0} && X. \ar@<1ex>[ll]^-{d_X} \ar@<-1ex>[ll]_-{c_X}
}
}
\end{equation*}
\item Take the pushout of $e_0$ and $e_X$ to find the double reflexive graph in Figure~\ref{Star}.
\item This double reflexive graph is not yet an internal double groupoid; reflect it into $\Grpd^2(\A)$ by taking the quotient of $Z$ by the join of commutators $[K_{c_Z},K_{d_Z}]\vee [K_{c_1},K_{d_1}]$.
\item The resulting internal double groupoid normalises to an internal crossed square 
\[
\xymatrix{M\otimes N \ar[r]^-{p_M} \ar[d]_-{p_N} & M\ar[d]^-\mu\\
N \ar[r]_-{\nu} & L,}
\]
whose structure involves a crossed module $M\otimes N\to L$. By definition, this is the non-abelian tensor product of the given pair of compatible actions.
\end{enumerate}
By known properties of the non-abelian tensor product for groups and Lie algebras, this reduces to the classical definitions in those cases (Proposition~\ref{prop:prop2.15 in BL87} and Proposition~\ref{Tensor for Lie algebras}). In Section~\ref{Sec:Examples} we give further concrete information in the case of a pair of inclusions of normal subobjects (where we obtain a crossed module whose image is a commutator) and the case of a pair of abelian objects acting trivially upon one another (then we regain the \emph{bilinear product} of \cite{HVdL2}).

In the forthcoming article \cite{dMVdL19.4}, we use this general version of the non-abelian tensor product to prove a result on the existence of universal central extensions of internal crossed modules over a fixed base object. Our present article is devoted to exploring some basic properties of the definition, and showing that in certain cases, the tensor product may be used to give an explicit description of an object of $\XSqr(\A)$ as a square~\eqref{Diag:Sq} in $\A$ equipped with suitable actions and a morphism $h\colon M\otimes N\to P$. This extends the explicit descriptions for groups and Lie algebras to the general setting. It is, however, not yet clear to us whether this description is always valid---see Section~\ref{section:towards crossed squares through the non-abelian tensor product}.

We start by recalling basic notions and techniques of commutators and internal actions (Section~\ref{Sec:Actions}) and crossed modules (Section~\ref{Sec:Xmod}) in semi-abelian categories. Section~\ref{Section:Double groupoids and double reflexive graphs} discusses double reflexive graphs and internal double groupoids, and Section~\ref{Section:crossed squares} is devoted to the basic theory of crossed squares. In Section~\ref{Section Non-Abelian Tensor} we explain how this may be used in an intrinsic approach to the non-abelian tensor product. Section~\ref{Sec:Examples} gives examples. Section~\ref{section:towards crossed squares through the non-abelian tensor product} treats a (partial) description of crossed squares in terms of the tensor product.

\section{Commutators and internal actions}\label{Sec:Actions}
Here we recall basic properties of commutators and internal actions, needed in what follows. We start with the equivalence between internal actions and split extensions. 

A \emph{point} $(p,s)$ in a category $\A$ is a split epimorphism $p\colon A\to B$ together with a chosen splitting $s\colon B\to A$, so that $p\comp s=1_B$. The category $\Pt(\A)$ of \emph{points in~$\A$} has, as objects, points in $\A$, and as morphisms, natural transformations between such (as in Lemma~\ref{lemma:epimorphism of points is pushout}). 

If $\A$ is a semi-abelian category, then a point $(p,s)$ with a chosen kernel $k$ of $p$ is the same thing as a \emph{split extension} in $\A$: a split short exact sequence
\[
\xymatrix{0 \ar[r] & K \ar@{{ |>}->}[r]^k & A \ar@{-{ >>}}@<.5ex>[r]^-p & B \ar@<.5ex>[l]^-s \ar[r] & 0,}
\]
which means that $k$ is the kernel of $p$, $p$ is the cokernel of $k$, and $p\comp s=1_B$. In such a split extension, $k$ and $s$ are jointly extremal-epimorphic.

Via a semi-direct product construction~\cite{BJ98}, we have an equivalence $\Pt(\A)\simeq\Act(\A)$, where the latter category of \emph{(internal) actions} in $\A$ consists of the algebras of the monad $(A\flat (-),\eta^A,\mu^A)$ defined through 
\begin{equation}\label{Def Flat}
\xymatrix{0 \ar[r]& A\flat B\ar@{{ |>}->}[r]^-{k_{A,B}} & A+B \ar@<.5ex>@{-{ >>}}[r]^-{\binom{1_A}{0}} & A\ar[r] \ar@<.5ex>[l]^-{\iota_A} & 0.}	
\end{equation}
One of the functors in the equivalence sends a point $(p,s)$ to the action $(B,K_p,\xi)$ in
\begin{align*}
\xymatrix{
0 \ar[r] & B\flat K_p \ar@{.>}[d]_-{\xi} \ar@{{ |>}->}[r]^-{k_{B,K_p}} & B+K_p \ar[d]^-{\binom{s}{k_p}} \ar@{-{ >>}}[r]^-{\binom{1_B}{0}} & B\ar@{=}[d] \ar[r] & 0\\
0 \ar[r] & K_p \ar@{{ |>}->}[r]_-{k_p} & A \ar@{-{ >>}}[r]_-{p} & B\ar[r] & 0.
}
\end{align*}
The other functor sends an action $(A,X,\xi)$ to the induced semidirect product, which is the point $({\pi_\xi\colon X\rtimes_{\xi} A \to A}, {i_\xi\colon A\to X\rtimes_{\xi} A })$, where $X\rtimes_{\xi} A$ is the coequaliser
\[
\xymatrixcolsep{4pc}
\xymatrixrowsep{3pc}
\xymatrix{
A\flat X \ar@<4pt>[r]^-{i_X\circ\xi} \ar@<-1pt>[r]_-{k_{A,X}} & A+X \ar[r]^{\sigma_{\xi}} & X\rtimes_{\xi} A,
}
\]
the map $\pi_\xi\colon X\rtimes_{\xi} A\to A$ is the unique map such that $\binom{1_A}{0}=\pi_{\xi}\comp \sigma_{\xi}$, and finally $i_\xi=\sigma_\xi\comp i_A$. 

We will denote $X\rtimes_{\xi} A$ as $X\rtimes A$ if there is no risk of confusion regarding the action involved. The map $k\coloneq\sigma_{\xi}\comp i_X\colon X\to X\rtimes_{\xi} A $ is always the kernel of $\pi_{\xi}$: it is easy to see that $\pi_{\xi}\comp k=0$, whereas for the universal property some work needs to be done. 

\begin{example}
\label{ex:semidirectproduct induced by trivial action}
The \emph{trivial action} $(A,X,\tau^A_X)$ is $\tau^A_X=\binom{0}{1_X}\comp k_{A,X}\colon A\flat X\to X$. We have 
\begin{align*}
(X\rtimes_{\tau^A_X} A,\sigma_{\tau^A_X})& \cong \Coeq(i_X\comp(\binom{0}{1_X}\comp k_{A,X}),k_{A,X}).
\end{align*}
Both $\binom{1_A}{0}$ and $\binom{0}{1_X}$ coequalise the two maps, and it is indeed not hard to see that 
\[
\Coeq(i_X\comp\binom{0}{1_X}\comp k_{A,X},k_{A,X})\cong (A\times X,\langle\binom{1_A}{0},\binom{0}{1_X}\rangle\colon A+X\to A\times X). 
\]
\end{example}

\begin{example}
For each object $A$ we can define the conjugation action $(A,A,\chi_A)$ through $\chi_A=\binom{1_A}{1_A}\comp k_{A,A}\colon A\flat A\to A$. Then we have that 
\begin{align*}
(A\rtimes_{\chi_A} A,\sigma_{\chi_A})& \cong \Coeq(i_2\comp(\binom{1_A}{1_A}\comp k_{A,A}),k_{A,A})\\&\cong (A\times A,\langle\binom{1_A}{0},\binom{1_A}{1_A}\rangle\colon A+A\to A\times A).
\end{align*}
\end{example}

\begin{remark}
\label{rmk:acting is like conjugating in the semidirect product}
From the definition, it follows that the square on the left
\[
\xymatrix{
A\flat X \ar@{{ |>}->}[r]^-{k_{A,X}} \ar[d]_-{\xi} & A+X \ar[d]^-{\sigma_{\xi}}\\
X \ar@{{ |>}->}[r]_-{k_{\pi_{\xi}}} & X\rtimes_{\xi}A
}
\qquad\qquad
\xymatrix{
A\flat X \ar[d]_-{i_{\xi}\flat k_{\pi_{\xi}}} \ar[r]^-{\xi} & X \ar[d]^-{k_{\pi_{\xi}}}\\
(X\rtimes_{\xi}A)\flat(X\rtimes_{\xi}A) \ar[r]_-{\chi_{(X\rtimes_{\xi}A)}} & X\rtimes_{\xi}A
}
\]
is both a pushout and a pullback. In fact, also the square on the right commutes, which means that \lq\lq computing an action\rq\rq\ is the same as \lq\lq computing the conjugation in the induced semidirect product\rq\rq.
\end{remark}

\begin{lemma}
\label{lemma:epimorphism of points is pushout}
Consider a morphism of points $(f,g)\colon(p_0,s_0)\to (p_1,s_1)$
\[
\xymatrix{
A_0 \ar[d]_-{f} \ar@<.5ex>[r]^-{p_0} & B_0 \ar@<.5ex>[l]^-{s_0} \ar[d]^-{g}\\
A_1 \ar@<.5ex>[r]^-{p_1} & B_1. \ar@<.5ex>[l]^-{s_1}
}
\]
If $f$ is an epimorphism then the right pointing square is a pushout. Dually, if~$f$ is a monomorphism, then the left pointing square is a pullback.
\end{lemma}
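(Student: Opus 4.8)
The plan is to verify the two universal properties by hand, directly from the defining equations $p_i\comp s_i=1_{B_i}$ of the two points together with the two commutativities $g\comp p_0=p_1\comp f$ and $f\comp s_0=s_1\comp g$ expressing that $(f,g)$ is a morphism of points; the epi/mono hypothesis on $f$ will enter at exactly one place in each case. No feature of the semi-abelian context is needed here—the argument is purely formal and holds in any pointed category.

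First I would treat the case where $f$ is an epimorphism and show that the right pointing (that is, the $p$-)square is a pushout. Given a cocone $u\colon B_0\to C$, $v\colon A_1\to C$ with $u\comp p_0=v\comp f$, I would \emph{guess} the comparison map to be $w\coloneq v\comp s_1\colon B_1\to C$. The first triangle is immediate: $w\comp g=v\comp s_1\comp g=v\comp f\comp s_0=u\comp p_0\comp s_0=u$, using $s_1\comp g=f\comp s_0$, then $v\comp f=u\comp p_0$, then $p_0\comp s_0=1_{B_0}$. For the second triangle I would chase $w\comp p_1$ precomposed with $f$: successively applying $p_1\comp f=g\comp p_0$, $s_1\comp g=f\comp s_0$, $v\comp f=u\comp p_0$ and $p_0\comp s_0=1_{B_0}$ gives $w\comp p_1\comp f=v\comp f$, and here I would invoke that $f$ is epic to cancel it and conclude $w\comp p_1=v$. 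Uniqueness is free: $p_1$ is a split epimorphism, hence an epimorphism, so any competitor is determined by the equation $w\comp p_1=v$.

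The second assertion is formally dual, and I would prove it by the same bookkeeping with the roles of $p_i$ and $s_i$ interchanged (and pushouts, epimorphisms replaced by pullbacks, monomorphisms). Assuming $f$ monic, for a cone $u\colon T\to A_0$, $v\colon T\to B_1$ over the $s$-square with $f\comp u=s_1\comp v$, the comparison map into $B_0$ should be $w\coloneq p_0\comp u$. One triangle is direct, $g\comp w=g\comp p_0\comp u=p_1\comp f\comp u=p_1\comp s_1\comp v=v$; for the other, the analogous chase yields $f\comp s_0\comp p_0\comp u=f\comp u$, and cancelling the monomorphism $f$ gives $s_0\comp w=u$. Uniqueness now uses $p_0\comp s_0=1_{B_0}$: any competitor $w'$ with $s_0\comp w'=u$ satisfies $w'=p_0\comp s_0\comp w'=p_0\comp u=w$.

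There is no genuine obstacle here; the whole content is organisational. The one point deserving attention—and the only place the hypothesis is used—is the observation that epi-ness (resp. mono-ness) of $f$ is precisely what upgrades the \lq\lq$f$-precomposed\rq\rq\ equalities $w\comp p_1\comp f=v\comp f$ and $f\comp s_0\comp p_0\comp u=f\comp u$ into the equalities $w\comp p_1=v$ and $s_0\comp w=u$ required by the respective universal properties. Everything else is forced by the four identities $p_i\comp s_i=1_{B_i}$, $g\comp p_0=p_1\comp f$ and $f\comp s_0=s_1\comp g$.
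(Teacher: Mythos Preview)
Your proof is correct. The paper actually states this lemma without proof, treating it as an elementary observation; your direct verification of the two universal properties is exactly the kind of routine argument one would supply if asked, and every step checks out. One tiny remark: the argument does not even require the ambient category to be pointed---it works in any category---so your parenthetical could be sharpened, but this has no bearing on correctness.
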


\subsection{Basic commutator theory}
We turn to the definition and stability properties of binary and ternary Higgins commutators.

\begin{definition}[\cite{MM10, CJ03, HVdL11}]
\label{defi:cosmash products}
Given two objects $A$ and $B$ in $\A$, the morphism
\[
\Sigma_{A,B}\coloneq\binom{\langle 1_A,0\rangle}{\langle0,1_B\rangle}=\langle \binom{1_A}{0}\binom{0}{1_B}\rangle\colon A+B \longrightarrow A\times B
\]
is a regular epimorphism. Hence taking its kernel we find the short exact sequence
\[
\xymatrixcolsep{3pc}
\xymatrix{
0 \ar[r] & A\diamond B \ar@{{ >}->}[r]^-{h_{A,B}} & A+B \ar[r]^-{\Sigma_{A,B}} & A\times B \ar[r] & 0
}
\]
where $A\diamond B$ is called the \emph{cosmash product} of $A$ and $B$.
\end{definition}

\begin{definition}[\cite{Hig56,MM10}]
Given two subobjects $(M,m)$ and $(N,n)$ of an object $X$, we define their \emph{Higgins commutator} as the image of the map $\binom{m}{n}\comp h_{M,N}$, that is the subobject of $X$ given by the factorisation
\[
\xymatrix{
M\diamond N \ar@{.{ >>}}[d] \ar@{{ |>}->}[r]^-{h_{M,N}} & M+N \ar[d]^-{\binom{m}{n}}\\
[M,N] \ar@{{ >}.>}[r] & X.
}
\]
\end{definition}

The \emph{Huq commutator} ${[M,N]}^{\Qq}_{X}$ of $M$ and $N$ can be seen as the normal closure in~$X$ of their Higgins commutator. Note that one vanishes if and only if so does the other. An object $X$ is said to be \emph{abelian} when $[X,X]$ is trivial; this happens precisely when $X$ admits a (necessarily unique) internal abelian group structure---see~\cite{BB04}. 

\begin{definition}[\cite{CJ03,HVdL11,HL13}]
Given three objects $A$, $B$ and $C$ in $\A$, consider the map 
\[
\Sigma_{A,B,C}=
\begin{pmatrix}
i_A & i_A & 0 \\
i_B & 0 & i_B \\
0 & i_C & i_C
\end{pmatrix}
\colon A+B+C \longrightarrow (A+B)\times(A+C)\times(B+C)
\]
and its kernel $h_{A,B,C}\colon A\diamond B\diamond C \to A+B+C$. The object $A\diamond B\diamond C$ is called the \emph{cosmash product} of $A$, $B$ and $C$. 

Given three subobjects $(K,k)$, $(M,m)$ and $(N,n)$ of an object $X$, we define their \emph{Higgins commutator} as the subobject of $X$ given by the factorisation 
\[
\xymatrix{
K\diamond M\diamond N \ar@{.{ >>}}[d] \ar@{{ |>}->}[r]^-{h_{K,M,N}} & K+M+N \ar[d]^-{\Bigl\lgroup\begin{smallmatrix}
	k \\ m\\ n
\end{smallmatrix}\Bigr\rgroup}\\
[K,M,N] \ar@{{ >}.>}[r] & X.
}
\]
We call $[K,M,N]$ the \emph{ternary Higgins commutator} of $K$, $M$ and $N$ in $X$.
\end{definition}

\begin{proposition}[\cite{HVdL11, HL13}]\label{Higgins properties}
Suppose $K_1$, $K_2$, $K_3\leq X$. Then we have the following (in)equalities of subobjects of $X$:
\begin{enumerate}
\item[0.] if $K_1=0$ then $[K_1,K_2]=0=[K_1,K_2,K_3]$;
\item $[K_1,K_2]=[K_2,K_1]$ and for $\sigma\in S_3$, $[K_1,K_2,K_3]=[K_{\sigma(1)},K_{\sigma(2)},K_{\sigma(3)}]$;
\item for $f\colon {X\to Y}$ any regular epimorphism, $f[K_1,K_2 ]=[f(K_1),f(K_2)]\leq Y$ and $f[K_1,K_2,K_3 ]=[f(K_1),f(K_2),f(K_3)]\leq Y$;
\item $[L_1,K_2]\leq [K_1,K_2]$ and $[L_1,K_2,K_3]\leq [K_1,K_2,K_3]$ when $L_1\leq K_1$;
\item $[[K_1,K_{2}],K_{3}]\leq [K_1,K_2,K_{3}]$;
\item $[K_1,K_1,K_2]\leq [K_1,K_2]$;
\item $[K_1,K_{2}\vee K_3]=[K_1,K_2]\vee [K_1,K_3]\vee [K_1,K_{2},K_3]$.
\end{enumerate}
\end{proposition}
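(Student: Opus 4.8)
The plan is to reduce every statement to a structural property of the cosmash product functors $-\diamond-$ and $-\diamond-\diamond-$, and then to read off the commutator (in)equalities from the fact that $[K_1,K_2]$ and $[K_1,K_2,K_3]$ are, by definition, the \emph{images} in $X$ of $h_{K_1,K_2}$ and $h_{K_1,K_2,K_3}$ composed with the inclusions of the $K_i$. Since taking images is monotone, preserves joins of subobjects (being a left adjoint to pullback), and is compatible with composition, any comparison morphism between cosmash products that is compatible with the maps into $X$ immediately yields the corresponding comparison of commutator subobjects.

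First I would dispose of the functorial statements. The assignment $(A,B)\mapsto A\diamond B$ is a subfunctor of $(A,B)\mapsto A+B$ because $\Sigma_{A,B}$ is natural in each variable, and similarly for the ternary cosmash. Property~(0) is then immediate: $\Sigma_{0,K_2}$ is an isomorphism, so $0\diamond K_2=0$. Property~(1) follows from the symmetry isomorphisms of $+$ and $\times$ (respectively the $S_3$-action permuting the summands of $A+B+C$), which restrict to $\diamond$ and commute with the maps into $X$. Property~(3) follows by applying functoriality to the monomorphism $L_1\hookrightarrow K_1$, producing a morphism $L_1\diamond K_2\to K_1\diamond K_2$ over $X$ and hence an inclusion of images. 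For the regular-epi statement~(2) the extra ingredient is that in a semi-abelian category coproducts and finite products of regular epimorphisms are again regular epimorphisms; restricting a regular epi $f$ to the $K_i$ and applying the short five lemma to the defining short exact sequences, naturality of $h$ yields a regular epimorphism $K_1\diamond K_2\twoheadrightarrow f(K_1)\diamond f(K_2)$, from which the defining square of $[K_1,K_2]$ gives $f[K_1,K_2]=[f(K_1),f(K_2)]$.

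The technical heart is the decomposition underlying~(6). I would first establish, at the level of cosmash products, an isomorphism
\[
A\diamond(B+C)\;\cong\;(A\diamond B)\vee(A\diamond C)\vee(A\diamond B\diamond C)
\]
of subobjects of $A+B+C$, by exhibiting the three displayed pieces as jointly extremal-epimorphic onto $A\diamond(B+C)=\ker\Sigma_{A,B+C}$ and checking that the induced comparison morphism is a monomorphism; this uses only the defining short exact sequences and the way the coproduct projections of $A+B+C$ restrict to the various cosmash products. Passing to images in $X$ gives the coproduct version of~(6); the join version is recovered by noting that $-\diamond-$ carries the regular epimorphism $K_2+K_3\twoheadrightarrow K_2\vee K_3$ to a regular epimorphism, so that the images in $X$ agree. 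As a bonus, the case $B=C$ yields property~(5): since $K\vee K=K$ the left-hand side collapses to $[A,K]$ while the right-hand side still contains $[A,K,K]$, forcing $[A,K,K]\le[A,K]$ and hence, by~(1), $[K_1,K_1,K_2]\le[K_1,K_2]$.

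Property~(4) is the subtlest point and I expect it to be the main obstacle, since it does \emph{not} follow from~(6) and~(3): monotonicity only gives $[[K_1,K_2],K_3]\le[K_1\vee K_2,K_3]$, whose right-hand side still contains the unwanted binary terms $[K_1,K_3]$ and $[K_2,K_3]$. The key is a natural comparison morphism $(A\diamond B)\diamond C\to A\diamond B\diamond C$. To build it, observe that both coproduct projections $\binom{1_A}{0}$ and $\binom{0}{1_B}$ out of $A+B$ vanish on $A\diamond B=\ker\Sigma_{A,B}$ (these are exactly the components of $\Sigma_{A,B}$); applying functoriality of $-\diamond C$ and composing, one checks that each of the three components of $\Sigma_{A,B,C}$ is annihilated on the image of $(A\diamond B)\diamond C$ in $A+B+C$, so that this inclusion factors through $\ker\Sigma_{A,B,C}=A\diamond B\diamond C$. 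Since $[K_1,K_2]$ is a regular-epi quotient of $K_1\diamond K_2$, the commutator $[[K_1,K_2],K_3]$ is the image of $(K_1\diamond K_2)\diamond K_3\to X$, and routing this map through the comparison morphism exhibits that image as contained in $[K_1,K_2,K_3]$. The delicate work throughout is to carry out these cosmash manipulations intrinsically, without elements, so the real content sits in the two (co)kernel computations: the decomposition behind~(6) and this factorisation through the ternary kernel.
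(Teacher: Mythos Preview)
The paper does not prove this proposition: it is stated with citation to \cite{HVdL11,HL13} and no argument is given. Your proposal is therefore not being compared against an in-paper proof but against the approach of those references, and it matches them closely. The reduction of every item to a structural statement about cosmash products, the decomposition of $A\diamond(B+C)$ as a join of $A\diamond B$, $A\diamond C$ and $A\diamond B\diamond C$ for~(6), the specialisation $K_2=K_3$ to obtain~(5), and the comparison morphism $(A\diamond B)\diamond C\to A\diamond B\diamond C$ for~(4) are exactly the ingredients used in the cited papers. Your sketch of the factorisation for~(4)---checking that the three components of $\Sigma_{A,B,C}$ vanish on the image of $(A\diamond B)\diamond C$ in $A+B+C$ by combining the vanishing of the two components of $\Sigma_{A,B}$ on $A\diamond B$ with the vanishing of the two components of $\Sigma_{A\diamond B,C}$ on $(A\diamond B)\diamond C$---is correct, and the passage from $(K_1\diamond K_2)\diamond K_3$ to $[K_1,K_2]\diamond K_3$ uses that $-\diamond K_3$ preserves regular epimorphisms, which follows from the defining short exact sequence and the regular Short Five Lemma as you indicate. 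In short: your plan is sound and is essentially the argument of the references the paper defers to.
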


A semi-abelian category is said to satisfy the \emph{Smith is Huq condition} \SH\ when the Smith-Pedicchio commutator~\cite{Ped95b} of two internal equivalence relations vanishes if and only if so does the Huq commutator of their associated normal subobjects~\cite{BB04,MFVdL12}. As explained in~\cite{HVdL11}, in terms of Higgins commutators, this amounts to the condition that whenever $M$, $N\lhd L$ are normal subobjects, $[M,N]=0$ implies $[M,N,L]=0$. As a consequence, under \SH, Higgins commutators suffice for the description of internal groupoids. Furthermore, the characterisation of internal crossed modules given in~\cite{Jan03} simplifies---see below. From now on, unless mentioned otherwise, this is the context we shall work in. Examples of semi-abelian categories that satisfy \SH\ include the categories of groups, (commutative) rings (not necessarily unitary), Lie algebras over a commutative ring with unit, Poisson algebras and associative algebras, as are all varieties of such algebras, and crossed modules over those. In fact, all \emph{Orzech categories of interest}~\cite{Orz72,CGVdL15b} are examples. On the other hand, the category of loops is semi-abelian but does not satisfy \SH.

\section{Internal reflexive graphs, groupoids and (pre-)crossed modules}\label{Sec:Xmod}
Here we recall how to characterise internal reflexive graphs and internal groupoids as internal precrossed modules and internal crossed modules in a semi-abelian category $\A$ that satisfies the \emph{Smith is Huq condition} \SH. When the context is clear, we sometimes drop the adjective \emph{internal}.
\begin{definition}
A \emph{reflexive graph} $(C_1,C_0,d,c,e)$ in $\A$ is given by a diagram
\begin{align*}
\xymatrix{
C_1 \ar@<6pt>[r]^-{d} \ar@<-4pt>[r]_-{c} & C_0 \ar@<-1pt>[l]|-{e}
}
\end{align*}
such that $d\comp e=1_{C_0}=c\comp e$. A morphism of reflexive graphs is a natural transformation between two such diagrams. This determines a category $\RG(\A)$.
\end{definition}

\begin{lemma}[\cite{MFVdL12}] 
\label{lemma: mult struc <=> HUQ=0}
Let $\A$ be a semi-abelian category with \SH. Given a reflexive graph $(C_1,C_0,d,c,e)$, it admits a (unique) internal groupoid structure if and only if ${[K_{d},K_{c}]}=0$.
\end{lemma}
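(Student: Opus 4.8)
The plan is to reduce the existence of the groupoid structure to a centralisation condition on a pair of equivalence relations, and then to invoke \SH\ to rephrase that condition in terms of the Higgins commutator of the kernels. The uniqueness part is automatic: a semi-abelian category is in particular a Mal'tsev category, so any reflexive graph carries at most one internal category structure, and every internal category in a Mal'tsev category is a groupoid. Thus only the existence of the structure, under the stated commutator condition, requires an argument.

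First I would pass from the reflexive graph to its two kernel pairs. Writing $R[d]$ and $R[c]$ for the kernel pairs of $d$ and $c$, these are internal equivalence relations on $C_1$ sharing the same underlying object. By the theory of connectors in Mal'tsev categories (Bourn and Pedicchio), a reflexive graph $(C_1,C_0,d,c,e)$ admits an internal groupoid structure if and only if $R[d]$ and $R[c]$ admit a connector, and the existence of such a connector is equivalent to the vanishing of their Smith--Pedicchio commutator $[R[d],R[c]]$. This is the geometric heart of the statement: the partial composition of the groupoid is recovered precisely from the connector, and the Mal'tsev property guarantees that this composition is unique whenever it exists.

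It then remains to translate the condition $[R[d],R[c]]=0$ into the asserted form. The normal subobject of $C_1$ associated with the equivalence relation $R[d]$ is exactly its normalisation, namely the kernel $K_d$ of $d$; likewise $R[c]$ normalises to $K_c$. By the very definition of the \SH\ condition, the Smith--Pedicchio commutator $[R[d],R[c]]$ vanishes if and only if the Huq commutator ${[K_d,K_c]}^{\Qq}_{C_1}$ of the associated normal subobjects vanishes. Finally, since the Huq commutator is the normal closure of the Higgins commutator, one vanishes exactly when the other does, so ${[K_d,K_c]}^{\Qq}_{C_1}=0$ is equivalent to $[K_d,K_c]=0$. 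Chaining these equivalences yields the claim.

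The main obstacle is the first reduction: verifying that an internal groupoid structure on the reflexive graph is the same datum as a connector between $R[d]$ and $R[c]$, and that such connectors are classified by the vanishing of the Smith commutator. Once this Mal'tsev-theoretic input is in place, the remaining steps are a direct application of the definition of \SH\ together with the already-recorded relationship between the Huq and Higgins commutators.
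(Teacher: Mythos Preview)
The paper does not actually prove this lemma: it is stated with a citation to~\cite{MFVdL12} and no argument is given. Your proposal is correct and is precisely the standard proof one finds in the cited reference---reduce to a connector between the kernel pairs, identify the existence of a connector with the vanishing of the Smith--Pedicchio commutator, and then use \SH\ (together with the equivalence of the Huq and Higgins commutators' vanishing) to translate this into $[K_d,K_c]=0$. One small point worth making explicit: one direction of the equivalence between the Smith and Huq/Higgins commutator conditions holds in any semi-abelian category (Smith trivial implies Huq trivial), so \SH\ is only needed for the implication from $[K_d,K_c]=0$ to the existence of the groupoid structure; you might separate the two directions accordingly.
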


The forgetful functor $\Grpd(\A)\to \RG(\A)$ has a left adjoint ${\RG(\A)\to \Grpd(\A)}$. The image of a reflexive graph $(C_1,C_0,d,c,e)$ through this functor is the reflexive graph $(C'_1,C'_0,d',c',e')$ where $C'_0=C_0$, $C'_1={C_1}/{{[K_{d},K_{c}]}^{\Qq}_{C_1}}$ and $d'$, $c'$, $e'$ are induced by $d$, $c$ and $e$, respectively.
By Lemma~\ref{lemma: mult struc <=> HUQ=0} this is indeed an internal groupoid.

\begin{definition}[\cite{Jan03,MM10.2}]
\label{defi:definition of internal pre-crossed modules}
An \emph{internal pre-crossed module} $(X\xrightarrow{\partial}A,\xi)$ in a semi-abelian category $\A$ with \SH\ is given by an internal action $(A,X,\xi)$ and a morphism $\partial\colon X\to A$ such that the diagram
\[
\vcenter{
\xymatrix{
A\flat X\ar[d]_-{1_A\flat\partial} \ar[r]^-{\xi} & X\ar[d]^-{\partial} \\
A\flat A \ar[r]_-{\chi_A} & A
}
}
\]
commutes. We write $\PreXMod(\A)$ for the category of precrossed modules with the suitable morphisms between them, which are morphisms of arrows that preserve the action.
\end{definition}

\begin{construction}
\label{constr:particular point associated to a crossed module}
By using the correspondence between $\Pt(\A)$ and $\Act(\A)$ we can map each internal pre-crossed module to a particular reflexive graph
\begin{align*}
\xymatrix{
X \ar@{{ |>}->}[rr]^-{k_d} && X\rtimes_{\xi} A \ar@<1ex>[rr]^-{d} \ar@<-1ex>[rr]_-{c} && A, \ar[ll]|-{e}
}
\end{align*}
where $c\comp e=1_A=d\comp e$. Details of this construction are as follows: first we obtain $X\rtimes_{\xi} A$ and the maps $d$, $e$ and $k_d$ by computing the point associated to the action $\xi$. Then we define the map $c$, so that $c\comp\sigma_\xi=\binom{1_A}{\partial}\colon A+X\to A$. Notice that $\binom{1_A}{\partial}\comp(i_X\comp \xi)=\binom{1_A}{\partial}\comp k_{A,X}$ due to the fact that $(A,X,\xi,\partial)$ is a pre-crossed module. Finally we deduce that $c\comp k=\partial$ and that $c\comp e=1_A$. This determines a category equivalence between internal reflexive graphs and internal pre-crossed modules.
\end{construction}

\begin{definition}[\cite{Jan03,MM10.2,HVdL11}]
An \emph{internal crossed module} in a semi-abelian category $\A$ with \SH\ is an internal pre-crossed module $(X\xrightarrow{\partial}A,\xi)$ satisfying the so-called \emph{Peiffer condition}, which is the commutativity of the diagram
\begin{align*}
\xymatrix{
X\flat X\ar[d]_-{\partial\flat 1_X} \ar[r]^-{\chi_X} & X\ar@{=}[d]\\
A\flat X \ar[r]_-{\xi} & X.
}
\end{align*}
\end{definition}
As follows from the results of \cite{Jan03,MM10.2,HVdL11}, the equivalence $\PreXMod(\A)\simeq\RG(\A)$ restricts to an equivalence $\XMod(\A)\simeq\Grpd(\A)$.

\begin{example}
\label{ex:crossed point associated to trivial crossed module}
Consider the pre-crossed module $(X\xrightarrow{0}A,\tau^A_X)$ given by the trivial action. Then the situation simplifies, and the associated reflexive graph is
\begin{align*}
\xymatrix{
X \ar@{{ |>}->}[rr]^-{\langle 0,1_X\rangle} && A\times X \ar@<1ex>[rr]^-{\pi_A} \ar@<-1ex>[rr]_-{\pi_A} && A.\ar[ll]|-{\langle 1_A,0\rangle}
}
\end{align*}
Furthermore, from Lemma~\ref{lemma: mult struc <=> HUQ=0} it follows that $(X\xrightarrow{0}A,\tau^A_X)$ is a crossed module if and only if $X$ is an abelian object.
\end{example}

\section{Double groupoids and double reflexive graphs}
\label{Section:Double groupoids and double reflexive graphs}

We recall the categories of internal double groupoids and internal double reflexive graphs, and describe how one is embedded into the other as a reflective subcategory. In this section, $\A$ is a semi-abelian category that satisfies \SH.

\begin{definition}
A \emph{double reflexive graph in $\A$} is a reflexive graph in $\RG(\A)$. This means that the category $\RG^2(\A)$ is defined as $\RG(\RG(\A))$.
\end{definition}

\begin{lemma}
A double reflexive graph can be depicted as a diagram in $\A$ of the form in Figure~\ref{Star} in which each pair of adjacent vertices forms a reflexive graph.
\end{lemma}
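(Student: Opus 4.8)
The plan is to unwind the two nested definitions and read off what the data of a reflexive graph internal to $\RG(\A)$ amounts to concretely. By definition $\RG^2(\A)=\RG(\RG(\A))$, so an object is a reflexive graph $(\mathcal{C}_1,\mathcal{C}_0,\mathbf{d},\mathbf{c},\mathbf{e})$ in which $\mathcal{C}_1$ and $\mathcal{C}_0$ are themselves objects of $\RG(\A)$ and $\mathbf{d},\mathbf{c},\mathbf{e}$ are morphisms of reflexive graphs satisfying $\mathbf{d}\comp\mathbf{e}=1_{\mathcal{C}_0}=\mathbf{c}\comp\mathbf{e}$. First I would name the components: write $\mathcal{C}_0=(L,Y,d_0,c_0,e_0)$ and $\mathcal{C}_1=(X,Z,d_1,c_1,e_1)$, where I have oriented things so that the ``vertical'' graph structure matches Figure~\ref{Star}. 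Since a morphism in $\RG(\A)$ is by definition a natural transformation between the two parallel-pair-plus-section diagrams, each of $\mathbf{d},\mathbf{c},\mathbf{e}$ consists of a pair of maps in $\A$ (one on the arrow-objects, one on the vertex-objects) making the evident squares commute with the source, target and identity maps.

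The key step is to display these components as the two horizontal layers of Figure~\ref{Star}. Unpacking $\mathbf{d}=(d_X,d_0)$, $\mathbf{c}=(c_X,c_0)$ and $\mathbf{e}=(e_X,e_0)$, the naturality conditions become exactly the commuting squares
\begin{equation*}
d_X\comp d_1=d_0\comp d_Z,\quad c_X\comp d_1=d_0\comp c_Z,\quad\text{and so on,}
\end{equation*}
which say precisely that each of the four pairs of adjacent vertices in Figure~\ref{Star} forms a reflexive graph, and that the horizontal structure maps commute with the vertical ones. I would verify that the outer (horizontal) reflexive-graph axioms come from $\mathcal{C}_1,\mathcal{C}_0$ being objects of $\RG(\A)$, that the inner (vertical) axioms $d_X\comp e_X=1_L$ etc.\ come from $\mathbf{d}\comp\mathbf{e}=1=\mathbf{c}\comp\mathbf{e}$ read componentwise, and that the remaining commutativities are the naturality of $\mathbf{d},\mathbf{c},\mathbf{e}$. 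Conversely, any diagram of the shape of Figure~\ref{Star} in which every adjacent pair is a reflexive graph reassembles into a reflexive graph in $\RG(\A)$ by the same bookkeeping, so the correspondence is a bijection (indeed an isomorphism of categories once morphisms are matched).

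The main obstacle is purely organisational rather than mathematical: there is a choice of which of the two reflexive-graph directions in $\RG(\RG(\A))$ is drawn horizontally and which vertically, and one must be careful that the ``section of a morphism of graphs'' $\mathbf{e}$ really does decompose into the two identity-type maps $e_X,e_0$ and that all twelve structure maps are accounted for without duplication. I would therefore spend most of the care on exhibiting the dictionary between the four nested axioms of a graph-in-$\RG(\A)$ and the eight adjacent-pair reflexivity conditions of Figure~\ref{Star}, checking that the symmetric roles of $\mathbf{d}$ and $\mathbf{c}$ produce the two parallel horizontal maps and that no compatibility is left over. Once the components are named as above, each required equation is an instance either of a single-graph axiom or of naturality, so no genuine computation is needed; the content is the observation that the two descriptions carry exactly the same data.
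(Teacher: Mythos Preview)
Your approach is correct and is in fact the only reasonable one: the paper states this lemma without proof, treating it as a routine unwinding of the definition $\RG^2(\A)=\RG(\RG(\A))$, and what you have written is precisely that unwinding. A couple of your component labels are transposed relative to the paper's conventions (for instance, with the paper's ordering $(C_1,C_0,d,c,e)$ one should have $\mathcal{C}_0=(Y,L,d_0,c_0,e_0)$, $\mathcal{C}_1=(Z,X,d_1,c_1,e_1)$, and $\mathbf d=(d_Z,d_X)$ rather than $(d_X,d_0)$), but your naturality square $d_X\comp d_1=d_0\comp d_Z$ and the remainder of the argument show you have the structure right; these are purely notational slips.
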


\begin{definition}
An \emph{internal double groupoid in $\A$} is a groupoid in $\Grpd(\A)$. This means that the category $\Grpd^2(\A)$ is defined as $\Grpd(\Grpd(\A))$.
\end{definition}

\begin{proposition}
Double groupoids are diagrams as in Figure~\ref{Star} in which each reflexive graph has an internal groupoid structure.
\end{proposition}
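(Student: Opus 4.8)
The plan is to apply the characterisation of internal groupoid structures from Lemma~\ref{lemma: mult struc <=> HUQ=0} twice: once in the base $\A$ for the four edge graphs, and once in $\Grpd(\A)$ for the outer vertical graph. Recall first that $\Grpd(\A)$ is again semi-abelian and, as is needed below, continues to satisfy \SH, so Lemma~\ref{lemma: mult struc <=> HUQ=0} is available there too; recall also that finite limits (in particular kernels) in $\Grpd(\A)$ are computed componentwise in $\A$. I would begin by unwinding the definition: an object of $\Grpd(\Grpd(\A))$ has an underlying reflexive graph in $\Grpd(\A)$ with source $d$, target $c$ and unit $e$ between two horizontal groupoids $\mathcal{G}_1=(Z,X,d_1,c_1,e_1)$ and $\mathcal{G}_0=(Y,L,d_0,c_0,e_0)$, the outer maps having components $d=(d_Z,d_X)$, $c=(c_Z,c_X)$, $e=(e_Z,e_X)$. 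This is exactly a diagram of the shape of Figure~\ref{Star} read as a double reflexive graph, and since $\mathcal{G}_1$ and $\mathcal{G}_0$ are \emph{by definition} objects of $\Grpd(\A)$, the top and bottom horizontal graphs already carry their (unique) internal groupoid structures. It remains to match the existence of the outer multiplication with the requirement that the two vertical graphs $(Z,Y,d_Z,c_Z,e_Z)$ and $(X,L,d_X,c_X,e_X)$ be groupoids.

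By Lemma~\ref{lemma: mult struc <=> HUQ=0} applied in $\Grpd(\A)$, the outer reflexive graph admits a groupoid structure if and only if $[K_d,K_c]=0$ there. Computed componentwise, the kernels are $K_d=(K_{d_Z},K_{d_X},\dots)$ and $K_c=(K_{c_Z},K_{c_X},\dots)$. The core of the argument is then to prove that this commutator vanishes in $\Grpd(\A)$ if and only if both $[K_{d_X},K_{c_X}]=0$ and $[K_{d_Z},K_{c_Z}]=0$ hold in $\A$; by a second application of Lemma~\ref{lemma: mult struc <=> HUQ=0} this says precisely that the right and left vertical graphs are groupoids. I would pass to the equivalent formulation through the Huq commutator, which vanishes exactly when the Higgins commutator does: $[K_d,K_c]=0$ means that $K_d$ and $K_c$ cooperate, that is, there is a (necessarily unique) morphism $\varphi\colon K_d\times K_c\to\mathcal{G}_1$ in $\Grpd(\A)$ restricting to the two inclusions.

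Since products in $\Grpd(\A)$ are componentwise, the forward implication is immediate: evaluating such a $\varphi$ at objects and at morphisms yields cooperators $\varphi_0\colon K_{d_X}\times K_{c_X}\to X$ and $\varphi_1\colon K_{d_Z}\times K_{c_Z}\to Z$ for the two component pairs, whence $[K_{d_X},K_{c_X}]=0$ and $[K_{d_Z},K_{c_Z}]=0$. The main obstacle is the converse: assembling two given componentwise cooperators $\varphi_0$ and $\varphi_1$ into a single morphism of internal groupoids, i.e. verifying their compatibility with the horizontal source, target, unit and multiplication of $\mathcal{G}_1$. Here I would exploit the fact that in a semi-abelian, hence Mal'cev, category a cooperator is \emph{unique} when it exists: each required compatibility identity equates two morphisms which are both cooperators for one and the same pair of subobjects of $X$ (respectively of $Z$), so they must coincide. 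This forces $(\varphi_1,\varphi_0)$ to be a morphism in $\Grpd(\A)$ and hence the desired cooperator, giving $[K_d,K_c]=0$.

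Combining the two directions identifies double groupoids with those double reflexive graphs all four of whose edge graphs are internal groupoids. Finally, to see that this identification is full---that morphisms of double groupoids are nothing but morphisms of the underlying double reflexive graphs---I would again invoke the uniqueness of internal groupoid structures in the Mal'cev setting, by which any morphism of reflexive graphs between internal groupoids automatically preserves the (unique) multiplications. I expect the compatibility step in the converse to be the only genuinely delicate point, and the uniqueness of cooperators to be exactly what dispatches it cleanly.
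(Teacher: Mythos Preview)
Your argument is correct, but it takes a considerably longer route than the paper's one-sentence proof. The paper argues directly with the multiplication: since $\Grpd(\A)$ is a limit-closed full subcategory of the functor category $\RG(\A)$, limits---and hence the pullback $\mathcal{G}_1\times_{\mathcal{G}_0}\mathcal{G}_1$ and all the diagrams expressing the groupoid axioms---are computed pointwise, so a groupoid multiplication in $\Grpd(\A)$ is the same as a pair of componentwise multiplications satisfying the axioms componentwise. For the converse one only has to check that two given vertical multiplications assemble into a morphism of \emph{horizontal} groupoids; by uniqueness of internal groupoid structures in a Mal'cev category, any morphism of reflexive graphs between groupoids automatically preserves the multiplication, and this disposes of the compatibility at once. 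You instead invoke Lemma~\ref{lemma: mult struc <=> HUQ=0} in $\Grpd(\A)$, reduce to the vanishing of $[K_d,K_c]$, and then use uniqueness of \emph{cooperators} to glue the componentwise cooperators into a morphism of groupoids---so your uniqueness principle plays exactly the role the paper's does, one level removed. The cost of the detour is the extra hypothesis that \SH\ lifts to $\Grpd(\A)$: this is true, but it is neither established in the paper nor completely immediate, whereas the paper's direct argument needs only the Mal'cev property and avoids the issue entirely.
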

\begin{proof}
This combines two facts: internal groupoid structures are necessarily unique, and limits in functor categories are computed pointwise.
\end{proof}

\subsection{Double groupoids induced by particular double reflexive graphs.}
\label{subsection:double groupoids induced by particular double reflexive graphs}

Consider a double reflexive graph as in Figure~\ref{Fig:BigDiagram}, such that the reflexive graph on the right and the one at the bottom are already groupoids; in other words (Lemma~\ref{lemma: mult struc <=> HUQ=0}), ${[K_{d_R},K_{c_R}]}$ and ${[K_{d_D},K_{c_D}]}$ are trivial. We want to construct a double groupoid by dividing the join
\[
{[K_{d_L},K_{c_L}]}^{\Qq}_{A}\vee {[K_{d_U},K_{c_U}]}^{\Qq}_{A}
\]
out of $A$. This does indeed work, and is part of the construction in the following:

\begin{proposition}\label{Proposition Induced Double Groupoid}
The forgetful inclusion $\Grpd^2(\A)\to \RG^2(\A)$ has a left adjoint $\RG^2(\A)\to \Grpd^2(\A)$.
\end{proposition}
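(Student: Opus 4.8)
The plan is to construct the left adjoint by applying the one-dimensional reflection $\RG(\A)\to\Grpd(\A)$ recalled just after Lemma~\ref{lemma: mult struc <=> HUQ=0} twice, once in each of the two directions of a double reflexive graph, and to use that a composite of reflections is again a reflection. Reading $\RG^2(\A)=\RG(\RG(\A))$ and $\Grpd^2(\A)=\Grpd(\Grpd(\A))$, I regard a double reflexive graph as in Figure~\ref{Star} as an outer (say vertical) reflexive graph $(Z\rightrightarrows X)\rightrightarrows(Y\rightrightarrows L)$ internal to $\RG(\A)$, whose object of arrows is the top horizontal graph $Z\rightrightarrows X$ and whose object of objects is the bottom horizontal graph $Y\rightrightarrows L$. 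Conceptually, since the inclusion $U\colon\Grpd(\A)\hookrightarrow\RG(\A)$ has a left adjoint $F$, post-composition yields $\RG(F)\dashv\RG(U)$, so $\RG(\Grpd(\A))$ is reflective in $\RG^2(\A)$; composing with the reflection of $\Grpd(\Grpd(\A))$ into $\RG(\Grpd(\A))$ then gives the assertion. I will, however, carry this out concretely, so as to exhibit the unit as an explicit quotient and to connect it with the construction set up just before the statement.

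First I would reflect the bottom and right graphs into groupoids. These two graphs share only the corner $L$ and otherwise involve the distinct objects $Y$ and $X$, so this amounts to dividing $[K_{d_0},K_{c_0}]^{\Qq}_{Y}$ out of $Y$ and $[K_{d_X},K_{c_X}]^{\Qq}_{X}$ out of $X$, keeping $Z$ and $L$ fixed and letting every structure map descend by post-composition. The resulting double reflexive graph $D_1$ has its bottom and right graphs in $\Grpd(\A)$, and $D\to D_1$ is readily seen to be the reflection of $D$ into the full subcategory of double reflexive graphs whose bottom and right graphs are groupoids: any morphism from $D$ into such an object kills $[K_{d_0},K_{c_0}]$ and $[K_{d_X},K_{c_X}]$, hence factors uniquely through $D\to D_1$.

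Second, $D_1$ is of the shape treated in the subsection preceding the statement (Figure~\ref{Fig:BigDiagram}, with corner $A=Z$), so I would divide out of $Z$ the join $[K_{d_Z},K_{c_Z}]^{\Qq}_{Z}\vee[K_{d_1},K_{c_1}]^{\Qq}_{Z}$ of the left and upper commutators, keeping $X$, $Y$ and $L$ fixed. This produces the candidate $D'$, together with a unit $\eta_D\colon D\to D'$ whose components are the evident quotient maps (the identity on $L$). For the universal property, let $f\colon D\to E$ be a morphism of double reflexive graphs with $E$ a double groupoid. Since $f$ is in particular a morphism of each component graph, $f(K_{d})\le K_{d^{E}}$ for each of the relevant domain and codomain maps; and because Higgins commutators are compatible with morphisms (Proposition~\ref{Higgins properties}), each component of $f$ sends the commutator it must annihilate into the corresponding commutator in $E$, which vanishes as $E$ is a double groupoid. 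Hence each component of $f$ factors through the corresponding component of $\eta_D$, uniquely since $\eta_D$ is (componentwise) a regular epimorphism; naturality of $\eta$ is immediate from the naturality of the quotients.

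The step I expect to be the main obstacle is verifying that $D'$ genuinely lies in $\Grpd^2(\A)$, i.e. that all four of its component graphs are groupoids---this is precisely the content of the claim \lq\lq this does indeed work\rq\rq\ made before the statement. The delicate point is that dividing the corner $Z$ by the join above must force the two remaining commutator conditions (for the upper and left graphs) to vanish without disturbing the bottom and right conditions already secured in $D_1$; this is where the commuting relations of the double graph together with the stability properties of Proposition~\ref{Higgins properties}---the regular-epimorphism formula in~(2) and the distributivity over joins in~(6)---are needed. Conceptually, the same fact is explained by the observation that $\Grpd(\A)$ is again semi-abelian~\cite{Bourn-Gran} and inherits the condition~\SH\ (as in~\cite{EG-honfg}), so that this second step is literally the one-dimensional reflection of Lemma~\ref{lemma: mult struc <=> HUQ=0} applied inside $\Grpd(\A)$ and therefore lands in $\Grpd(\Grpd(\A))$ by construction; establishing this inheritance, or equivalently performing the direct commutator bookkeeping, is the crux of the argument.
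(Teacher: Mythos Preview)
Your approach is essentially the paper's: first reduce to the case where the bottom and right graphs are already groupoids, then quotient the top-left corner by the join of the two remaining commutators and verify the universal property (this is exactly the content of the paper's Proposition~\ref{prop:the previous construction is an instance of the reflector}, and the paper derives Proposition~\ref{Proposition Induced Double Groupoid} from it in the same two-step fashion you describe).

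One small imprecision worth flagging: in your Step~1 you cannot literally keep $Z$ fixed while only quotienting $Y$ and $X$, because the sections $e_Z\colon Y\to Z$ and $e_1\colon X\to Z$ do not descend along those quotients by post-composition. However, since $(e_Z,e_X)$ and $(e_1,e_0)$ are morphisms of reflexive graphs one has $e_Z([K_{d_0},K_{c_0}])\leq[K_{d_1},K_{c_1}]$ and $e_1([K_{d_X},K_{c_X}])\leq[K_{d_Z},K_{c_Z}]$, so these obstructions are absorbed by the quotient you take in Step~2; the two steps performed together therefore yield the correct unit. The paper is equally informal about this point. Your closing conceptual argument---that $\Grpd(\A)$ is again semi-abelian with \SH, so the one-dimensional reflection may simply be applied inside it---is a clean way to make the whole thing rigorous, and is not spelled out in the paper.
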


Let us explain in detail how this works. Recall that in a semi-abelian category, the join of two subobjects $A$, $B$ of an object $X$ may be obtained via the image factorisation 
\[
\xymatrix{A+B \ar@{-{ >>}}[r] & A\vee B \ar@{{ >}->}[r]^-{a\vee b} & X}
\]
of the map $\binom{a}{b}\colon A+B\to X$ induced by representing monomorphisms $a$, $b$. When both $a$ and $b$ are normal monomorphisms, then so is $a\vee b$. In particular, we have:

\begin{lemma}[\cite{BB04}]
\label{lemma:the kernel of the diagonal of a pushout of two regular epimorphisms is the join of the two kernels}
Given two regular epimorphisms and their pushout
\[
\xymatrix{
A \ar@{-{ >>}}[r]^-{f} \ar@{-{ >>}}[d]_-{g} \ar@{.{ >>}}[rd]|-{h} & B \ar@{-{ >>}}[d]^-{f_*(g)}\\
C \ar@{-{ >>}}[r]_-{g_*(f)} & D
}
\]
the kernel of the diagonal $h$ is the join of the kernels of $f$ and $g$:  $K_{h}= K_{f}\vee K_{g}$.
\end{lemma}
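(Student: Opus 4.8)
The plan is to establish the two inclusions $K_f\vee K_g\leq K_h$ and $K_h\leq K_f\vee K_g$ separately, relying throughout on the order anti-isomorphism between normal subobjects of $A$ and regular quotients of $A$ that holds in any semi-abelian category. First I would note that $h$, being a composite of regular epimorphisms in a regular category, is again a regular epimorphism, so that $K_h$ is a genuine normal subobject and $h$ is (up to isomorphism) the cokernel of $k_h$; this makes the statement meaningful.

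For the inclusion $K_f\vee K_g\leq K_h$: since $h=f_*(g)\comp f=g_*(f)\comp g$, the diagonal $h$ annihilates both $K_f$ and $K_g$, so $K_f\leq K_h$ and $K_g\leq K_h$, and taking the join yields $K_f\vee K_g\leq K_h$.

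The reverse inclusion is the heart of the argument. Write $N\coloneq K_f\vee K_g$; as recalled before the statement, $N$ is a normal subobject of $A$ (a join of the normal subobjects $K_f$ and $K_g$), so the cokernel $q\colon A\to A/N$ is a regular epimorphism with kernel exactly $N$. Because $K_f\leq N=K_q$, we have $q\comp k_f=0$, so $q$ factors through the cokernel $f$ of $k_f$, say $q=\bar f\comp f$; symmetrically $q=\bar g\comp g$. The universal property of the pushout then supplies a unique $u\colon D\to A/N$ with $u\comp f_*(g)=\bar f$ and $u\comp g_*(f)=\bar g$, whence $u\comp h=u\comp f_*(g)\comp f=\bar f\comp f=q$. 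Thus $q$ factors through $h$, which forces $K_h\leq K_q=N=K_f\vee K_g$. Combining the two inclusions gives $K_h=K_f\vee K_g$.

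The only delicate point is the reverse inclusion, and there the crux is the factorisation of the canonical quotient $q$ through both $f$ and $g$: this is precisely where the equivalence between regular quotients and their kernels (valid in the semi-abelian setting) is used, converting the inclusions $K_f\leq N$ and $K_g\leq N$ into the comparison maps $\bar f$ and $\bar g$, after which the pushout property does the rest.
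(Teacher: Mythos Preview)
Your argument is correct and is exactly the standard proof of this fact. Note, however, that the paper does not supply its own proof of this lemma: it is quoted from~\cite{BB04} and stated without proof, so there is nothing to compare against beyond observing that your reasoning matches the classical argument one finds in that reference.
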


We apply this to the situation depicted in Figure~\ref{Fig:BigDiagram}, where $
S\coloneq[K_{d_L},K_{c_L}]$ and $T\coloneq[K_{d_U},K_{c_U}]$. All dotted arrows are universally induced: to see this, use Lemma~\ref{lemma: mult struc <=> HUQ=0} and the fact that the reflexive graph on the right and the one at the bottom are groupoids. 
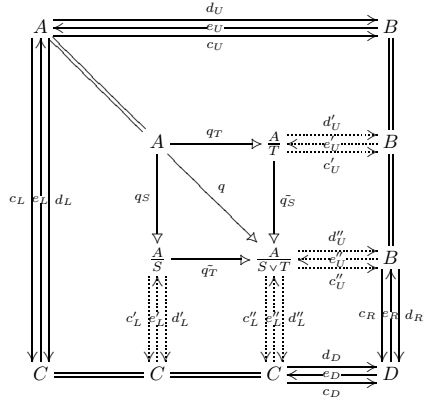
\begin{figure}
\resizebox{.45\textwidth}{!}{
$\vcenter{\xymatrix@!0@=6em{
A \ar@<1ex>[ddd]^-{d_L} \ar@<-1ex>[ddd]_-{c_L} \ar@<1ex>[rrr]^-{d_U} \ar@<-1ex>[rrr]_-{c_U} \ar@{=}[rd] &&& B \ar[lll]|-{e_U} \ar@{=}[d]\\
& A \ar@{-{ >>}}[rd]^-{q} \ar@{-{ >>}}[r]^-{q_T} \ar@{-{ >>}}[d]_-{q_S} & \frac{A}{T} \ar@{.>}@<1ex>[r]^-{d'_U} \ar@{.>}@<-1ex>[r]_-{c'_U} \ar@{-{ >>}}[d]^-{\Tilde{q_S}} & B \ar@{=}[d] \ar@{.>}[l]|-{e'_U}\\
& \frac{A}{S} \ar@{-{ >>}}[r]_-{\Tilde{q_T}} \ar@{.>}@<1ex>[d]^-{d'_L} \ar@<-1ex>@{.>}[d]_-{c'_L} & \frac{A}{S\vee T} \ar@{.>}@<1ex>[d]^-{d''_L} \ar@{.>}@<-1ex>[d]_-{c''_L} \ar@{.>}@<1ex>[r]^-{d''_U} \ar@{.>}@<-1ex>[r]_-{c''_U} & B \ar@{.>}[l]|-{e''_U} \ar@<1ex>[d]^-{d_R} \ar@<-1ex>[d]_-{c_R}\\
C \ar[uuu]|-{e_L} \ar@{=}[r] & C \ar@{.>}[u]|-{e'_L} \ar@{=}[r] & C \ar@{.>}[u]|-{e''_L} \ar@<1ex>[r]^-{d_D} \ar@<-1ex>[r]_-{c_D} & D \ar[l]|-{e_D} \ar[u]|-{e_R}
}}
$}
\caption{Reflecting a particular double reflexive graph to a double groupoid}\label{Fig:BigDiagram} 
\end{figure}
Clearly, 
\[
\text{$({A}/{(S\vee T)}, B, d''_U,c''_U,e''_U)$\qquad and\qquad $({A}/{(S\vee T)}, C, d''_L,c''_L,e''_L)$}
\]
are reflexive graphs. We need to prove that they are internal groupoids, that is
\[
\label{eq: the induced graphs are multiplicative}
\text{${[K_{d_U''},K_{c_U''}]}^{\Qq}_{\frac{A}{S\vee T}}=0$\qquad and\qquad ${[K_{d_L''},K_{c_L''}]}^{\Qq}_{\frac{A}{S\vee T}}=0$.}
\]
We shall only prove the first equality, since the strategy for the second one is the same. 
Consider the following diagram, which has vertical and horizontal short exact sequences by the $3\times 3$-Lemma.
\[
\xymatrix{
K_{\Tilde{q_S}} \ar@{{ |>}->}[r] \ar@{=}[d] & K_{d_U'} \ar@{-{ >>}}[r] \ar@{{ |>}->}[d] & K_{d_U''} \ar@{{ |>}->}[d]\\
K_{\Tilde{q_S}} \ar@{{ |>}->}[r] \ar@{-{ >>}}[d] & \frac{A}{T} \ar@{-{ >>}}[r]^-{\Tilde{q_S}} \ar@{-{ >>}}[d]_-{d'_U} & \frac{A}{S\vee T} \ar@{-{ >>}}[d]^-{d''_U}\\
0 \ar@{{ |>}->}[r] & B \ar@{=}[r] & B
}
\]
In precisely the same way it is possible to describe the image factorisation of $\Tilde{q_S}\comp k_{c_U'}$. Now we can apply Proposition~\ref{Higgins properties} to the diagram 
\[
\xymatrix{
K_{d_U'} \ar@{-{ >>}}[d] \ar@{{ |>}->}[r] & \frac{A}{T} \ar@{-{ >>}}[d]^-{\Tilde{q_S}} & K_{c_U'} \ar@{-{ >>}}[d] \ar@{{ |>}->}[l]\\
K_{d_U''} \ar@{{ |>}->}[r] & \frac{A}{S\vee T} & K_{c_U''} \ar@{{ |>}->}[l]
}
\]
to see that if ${[K_{d_U'},K_{c_U'}]}$ is trivial, then ${[K_{d_U''},K_{c_U''}]}$ is trivial as well.

\begin{proposition}
\label{prop:the previous construction is an instance of the reflector}
Given a double reflexive graph as in Figure~\ref{Fig:BigDiagram} where the reflexive graphs on the bottom and on the right are internal groupoids, the morphism of double reflexive graphs in Figure~\ref{Fig:Unit} 
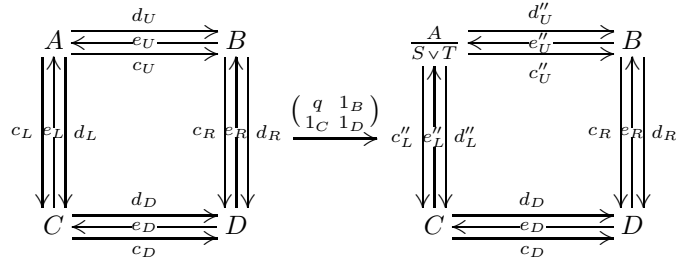
\begin{figure}
$\vcenter{
\xymatrix{
A \ar@<1ex>[dd]^-{d_L} \ar@<-1ex>[dd]_-{c_L} \ar@<1ex>[rr]^-{d_U} \ar@<-1ex>[rr]_-{c_U} && B \ar[ll]|-{e_U} \ar@<1ex>[dd]^-{d_R} \ar@<-1ex>[dd]_-{c_R} && \frac{A}{S\vee T} \ar@<1ex>[dd]^-{d_L''} \ar@<-1ex>[dd]_-{c_L''} \ar@<1ex>[rr]^-{d_U''} \ar@<-1ex>[rr]_-{c_U''} && B \ar[ll]|-{e_U''} \ar@<1ex>[dd]^-{d_R} \ar@<-1ex>[dd]_-{c_R}\\
&& \ar@{}[rr]^(.25){}="a"^(.75){}="b" \ar "a";"b"^-{\morph{q & 1_B}{1_C& 1_D}} && \\
C \ar[uu]|-{e_L} \ar@<1ex>[rr]^-{d_D} \ar@<-1ex>[rr]_-{c_D} & & D \ar[uu]|-{e_R} \ar[ll]|-{e_D} && C \ar[uu]|-{e_L''} \ar@<1ex>[rr]^-{d_D} \ar@<-1ex>[rr]_-{c_D} && D \ar[uu]|-{e_R} \ar[ll]|-{e_D}
}
}
$\caption{The unit of the adjunction, when we already have a groupoid on the right and on the bottom}\label{Fig:Unit}
\end{figure}
coincides with the unit of the adjunction between double groupoids and double reflexive graphs induced by the adjunction between groupoids and reflexive graphs.
\end{proposition}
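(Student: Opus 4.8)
The plan is to realise the reflection $\RG^2(\A)\to\Grpd^2(\A)$ of Proposition~\ref{Proposition Induced Double Groupoid} as a composite of two one-step reflections and to evaluate the resulting composite unit on the double reflexive graph $\mathbb{G}$ of Figure~\ref{Fig:BigDiagram}. Writing $L\dashv U$ for the reflection between reflexive graphs and groupoids in $\A$, I would factor the induced adjunction as
\[
\RG^2(\A)=\RG(\RG(\A)) \xrightarrow{\RG(L)} \RG(\Grpd(\A)) \xrightarrow{L'} \Grpd(\Grpd(\A))=\Grpd^2(\A),
\]
where $\RG(L)$ reflects the inner graphs by applying $L$ to the two objects of the outer graph, and $L'$ is the same reflection but internal to the semi-abelian category $\Grpd(\A)$ (which again satisfies \SH), reflecting the outer graph structure. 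Since the unit of a composite adjunction is the composite of the two units, it suffices to compute each stage.

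First stage. Regarding the horizontal graphs as the inner structure, the bottom graph $(C,D,d_D,c_D,e_D)$ is already an internal groupoid, so $\RG(L)$ fixes it, while the top graph $(A,B,d_U,c_U,e_U)$ is reflected by quotienting $A$ by $T^{\Qq}_A$ and leaving $B$ untouched (Lemma~\ref{lemma: mult struc <=> HUQ=0} together with the explicit reflector recalled in Section~\ref{Sec:Xmod}). Thus the first unit is $q_T\colon A\to A/T$ on the top-left object and the identity on $B$, $C$ and $D$, which is the upper-left portion of Figure~\ref{Fig:BigDiagram}.

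Second stage. Here I would use that the inclusion $\Grpd(\A)\hookrightarrow\RG(\A)$ creates kernels (being a right adjoint) and that Higgins commutators and normal closures in $\Grpd(\A)$ are computed componentwise. Writing $\bar d_L,\bar c_L\colon A/T\to C$ for the induced left-hand maps, the outer domain and codomain morphisms of $\RG(L)\mathbb{G}$ have kernels with arrow-components $K_{\bar d_L},K_{\bar c_L}$ and object-components $K_{d_R},K_{c_R}$; their commutator therefore has arrow-component $[K_{\bar d_L},K_{\bar c_L}]$ and object-component $[K_{d_R},K_{c_R}]$. The latter vanishes because the right graph $(B,D,d_R,c_R,e_R)$ is a groupoid, so $L'$ only quotients the arrow-object $A/T$ by $[K_{\bar d_L},K_{\bar c_L}]^{\Qq}$ and fixes $B$, $C$ and $D$. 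Since $K_{\bar d_L}=q_T(K_{d_L})$ and $K_{\bar c_L}=q_T(K_{c_L})$, Proposition~\ref{Higgins properties}(2) gives $[K_{\bar d_L},K_{\bar c_L}]=q_T(S)$, so its normal closure is $q_T(S^{\Qq}_A)$; composing the two successive quotients then yields $A\to A/(S^{\Qq}_A\vee T^{\Qq}_A)$. Hence the composite unit is exactly $\morph{q & 1_B}{1_C & 1_D}$ of Figure~\ref{Fig:Unit}.

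The main obstacle is the componentwise computation underlying the second stage. Creation of kernels is immediate, but that regular epimorphisms---and hence image factorisations and Higgins commutators---in $\Grpd(\A)$ are levelwise requires knowing that $\Grpd(\A)$ is a (semi-abelian) Birkhoff subcategory of $\RG(\A)$: it is closed under subobjects, since $[K_{d'},K_{c'}]\le[K_d,K_c]=0$ for a subgraph by Proposition~\ref{Higgins properties}(3), and under regular quotients. As a verification that avoids this point, one can instead check the universal property directly: for any morphism $f\colon\mathbb{G}\to\mathbb{H}$ of double reflexive graphs with $\mathbb{H}$ a double groupoid, compatibility of $f$ with the structure maps gives $f_A(K_{d_L})\le K_{d_L^{\mathbb{H}}}$ and $f_A(K_{c_L})\le K_{c_L^{\mathbb{H}}}$, whence $f_A(S)\le[K_{d_L^{\mathbb{H}}},K_{c_L^{\mathbb{H}}}]=0$ and likewise $f_A(T)=0$ by Proposition~\ref{Higgins properties}; as $\ker f_A$ is normal this forces $S^{\Qq}_A\vee T^{\Qq}_A\le\ker f_A$, so $f$ factors uniquely through the quotient. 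Because $\Grpd^2(\A)\to\RG^2(\A)$ is fully faithful, this exhibits Figure~\ref{Fig:Unit} as a reflection unit, which coincides with the induced one by uniqueness of units.
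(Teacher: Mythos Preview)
Your fallback verification in the final paragraph is correct and is essentially the paper's argument. The paper organises it slightly differently: writing $\alpha$ for your $f_A$, it invokes the one-dimensional reflection unit to obtain factorisations $\phi_T\colon A/T\to A'$ and $\phi_S\colon A/S\to A'$ of $\alpha$, and then uses the pushout description $A/(S\vee T)\cong (A/S)+_A(A/T)$ (Lemma~\ref{lemma:the kernel of the diagonal of a pushout of two regular epimorphisms is the join of the two kernels}) to produce the comparison $\phi$, rather than arguing directly that $\alpha$ annihilates $S$ and~$T$ via commutator monotonicity. The two are interchangeable.

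Your principal approach, however, has a genuine gap. You assert that Higgins commutators in $\Grpd(\A)$ are computed componentwise and offer the Birkhoff-subcategory status of $\Grpd(\A)\subseteq\RG(\A)$ as justification. That status does ensure that regular epimorphisms---and hence image factorisations---in $\Grpd(\A)$ are levelwise; but the Higgins commutator $[K_d,K_c]$ is the image of the map out of the cosmash product $K_d\diamond K_c$, which is built from the \emph{coproduct} in $\Grpd(\A)$. Coproducts in $\Grpd(\A)$ are not levelwise: the levelwise coproduct of two internal groupoids is in general only a reflexive graph and must itself be reflected. So nothing you have said forces the arrow-component of $[K_d,K_c]_{\Grpd(\A)}$ to coincide with $[K_{\bar d_L},K_{\bar c_L}]_{\A}$. (The object-component is unproblematic, since the object-of-objects functor $\Grpd(\A)\to\A$ has both a left and a right adjoint and therefore preserves cosmash products and images; but this says nothing about the arrow-part.) Filling this gap would in the end amount to the direct universal-property check you already give.
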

\begin{proof}
Consider another morphism of double reflexive graphs as in Figure~\ref{Fig:Morphism}
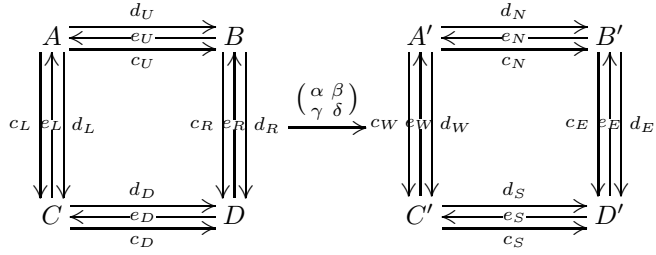
\begin{figure}
$\vcenter{
\xymatrix{
A \ar@<1ex>[dd]^-{d_L} \ar@<-1ex>[dd]_-{c_L} \ar@<1ex>[rr]^-{d_U} \ar@<-1ex>[rr]_-{c_U} && B \ar[ll]|-{e_U} \ar@<1ex>[dd]^-{d_R} \ar@<-1ex>[dd]_-{c_R} && A' \ar@<1ex>[dd]^-{d_W} \ar@<-1ex>[dd]_-{c_W} \ar@<1ex>[rr]^-{d_N} \ar@<-1ex>[rr]_-{c_N} && B' \ar[ll]|-{e_N} \ar@<1ex>[dd]^-{d_E} \ar@<-1ex>[dd]_-{c_E}\\
&& \ar@{}[rr]^(.25){}="a"^(.75){}="b" \ar "a";"b"^-{\morph{\alpha & \beta}{\gamma & \delta}} && \\
C \ar[uu]|-{e_L} \ar@<1ex>[rr]^-{d_D} \ar@<-1ex>[rr]_-{c_D} & & D \ar[uu]|-{e_R} \ar[ll]|-{e_D} && C' \ar[uu]|-{e_W} \ar@<1ex>[rr]^-{d_S} \ar@<-1ex>[rr]_-{c_S} && D' \ar[uu]|-{e_E} \ar[ll]|-{e_S}
}
}
$\caption{A morphism of double reflexive graphs}\label{Fig:Morphism}
\end{figure}
in which the codomain is a double groupoid. We want to define a morphism $\phi\colon{A}/{(S\vee T)}\to A'$ such that $\phi\comp q=\alpha$. In order to do this, consider the diagrams
\begin{align*}
\xymatrix{
A \ar[dd]_-{\alpha} \ar@{-{ >>}}[rd]^-{q_T} \ar@<1ex>[rrr]^-{d_U} \ar@<-1ex>[rrr]_-{c_U} &&& B \ar@{=}[d] \ar[lll]|-{e_U}\\
& \frac{A}{T} \ar@{.>}[dl]^-{\phi_T} \ar@<1ex>[rr]^-{d_U'} \ar@<-1ex>[rr]_-{c_U'} && B \ar[d]^-{\beta} \ar[ll]|-{e_U'}\\
A' \ar@<1ex>[rrr]^-{d_N} \ar@<-1ex>[rrr]_-{c_N} &&& B' \ar[lll]|-{e_N}
}
&&
\xymatrix{
A \ar[dd]_-{\alpha} \ar@{-{ >>}}[rd]^-{q_S} \ar@<1ex>[rrr]^-{d_L} \ar@<-1ex>[rrr]_-{c_L} &&& C \ar@{=}[d] \ar[lll]|-{e_L}\\
& \frac{A}{S} \ar@{.>}[dl]^-{\phi_S} \ar@<1ex>[rr]^-{d_L'} \ar@<-1ex>[rr]_-{c_L'} && C \ar[d]^-{\gamma} \ar[ll]|-{e_L'}\\
A' \ar@<1ex>[rrr]^-{d_W} \ar@<-1ex>[rrr]_-{c_W} &&& C' \ar[lll]|-{e_W}
}
\end{align*}
using the notation of in Figure~\ref{Fig:BigDiagram}.
Here the dotted maps are defined through the universal property of the unit of the adjunction between $\RG(\A)$ and $\Grpd(\A)$. Now we define $\phi$ via the universal property of the pushout in the diagram 
\[
\xymatrix{
A \ar@{-{ >>}}[rd]|-{q} \ar@{-{ >>}}[r]^-{q_T} \ar@{-{ >>}}[d]_-{q_S} & \frac{A}{T} \ar@{-{ >>}}[d]^-{\Tilde{q_S}} \ar@/^1pc/[rdd]^-{\phi_T}\\
\frac{A}{S} \ar@{-{ >>}}[r]_-{\Tilde{q_T}} \ar@/_1pc/[rrd]_-{\phi_S} & \frac{A}{S\vee T} \ar@{.>}[rd]|-{\phi}\\
&& A'.
}
\]
Obviously, $\morph{\phi & \beta}{\gamma & \delta}$ is a morphism of double groupoids, and it is the only one where $\morph{\phi & \beta}{\gamma & \delta}\comp\morph{q & 1_B}{1_C & 1_D}=\morph{\alpha & \beta}{\gamma & \delta}$.
\end{proof}

Proposition~\ref{Proposition Induced Double Groupoid} follows as a straightforward consequence of this result; the main thing to be done is to reflect the bottom and the right reflexive graph in Figure~\ref{Fig:BigDiagram} to internal groupoids before applying Proposition~\ref{prop:the previous construction is an instance of the reflector}.

\section{Crossed squares of groups and internal crossed squares}
\label{Section:crossed squares}
Crossed squares are to double groupoids what crossed modules are to groupoids. Before studying this principle in the semi-abelian context, we recall the definition in the category of groups. The case of Lie algebras shall be treated much later, in Subsection~\ref{SubsecLie}.

\begin{definition}[\cite{GWL80,Lod82,BL87}]
\label{defi:explicit defi of crossed square of groups}
A \emph{crossed square (of groups)} is a commutative square
\[
\xymatrix{
P \ar[r]^-{p_M} \ar[d]_-{p_N} & M \ar[d]^-{\mu}\\
N \ar[r]_-{\nu} & L
}
\]
in $\Grp$, together with actions of $L$ on $M$, $N$ and $P$ (and hence actions of $M$ on $P$ and $N$ via $\mu$, and of $N$ on $M$ and $P$ via $\nu$) and a function (not a group morphism!) $h\colon M\times N\to P$ such that the following axioms hold:
\begin{enumerate}[{\rm (X.1)}]
\setcounter{enumi}{-1}
\item $h(mm',n)={}^mh(m',n)h(m,n)$ and $h(m,nn')=h(m,n){}^nh(m,n')$;
\item the maps $p_M$ and $p_N$ preserve the actions of $L$, furthermore with the given actions $(M\xrightarrow{\mu}L)$, $(N\xrightarrow{\nu}L)$ and $(P\xrightarrow{\mu\circ p_M=\nu\circ p_N}L)$ are crossed modules;
\item $p_M(h(m,n))=m{}^{n}m^{-1}$ and $p_N(h(m,n))={}^{m}nn^{-1}$;
\item $h(p_M(p),n)=p{}^{n}p^{-1}$ and $h(m,p_N(p))={}^{m}pp^{-1}$;
\item ${}^lh(m,n)=h({}^lm,{}^ln)$;
\end{enumerate}
for all $l\in L$, $m$, $m'\in M$, $n$, $n'\in N$ and $p\in P$.

A map of crossed squares is given by four group morphisms which are compatible with the actions and with the map $h$. Crossed squares and morphisms between them form the category $\XSqr(\Grp)$.
\end{definition}

\begin{definition}[\cite{BL87}]
\label{defi:crossed pairing}
Given a pair of $L$-crossed modules $(M\xrightarrow{\mu}L,\xi_M)$ and $(N\xrightarrow{\nu}L,\xi_N)$ in $\Grp$, we have an action $\xi^M_N$ of $M$ on $N$ induced via $\mu$ and an action $\xi^N_M$ of $N$ on $M$ induced via $\nu$. We say that a map $h\colon M\times N \to P$ is a \emph{crossed pairing} if the following hold, for each $m$, $m'\in M$ and $n$, $n'\in N$:
\[
\text{$h(mm',n)=h({}^mm',{}^mn)h(m,n)$,\qquad\qquad $h(m,nn')=h(m,n)h({}^nm,{}^nn')$.}
\]
\end{definition}

\begin{remark}
\label{rmk:h of a crossed square is a crossed pairing}
Notice that if we have a crossed square, then the map $h\colon M\times N\to P$ is actually a crossed pairing. Indeed, by using \X4 and the fact that the actions involved are induced from the actions of $L$, we can show the equivalence between condition \X0 and $h$ being a crossed pairing, through the equalities
\begin{align*}
{}^mh(m',n)&={}^{\mu(m)}h(m',n)=h({}^{\mu(m)}m',{}^{\mu(m)}n)=h({}^mm',{}^mn),\\
{}^nh(m,n')&={}^{\nu(n)}h(m,n')=h({}^{\nu(n)}m,{}^{\nu(n)}n')=h({}^nm,{}^nn').
\end{align*}
\end{remark}

In Proposition~5.2 in~\cite{Lod82}, it is proved that Definition~\ref{defi:explicit defi of crossed square of groups} is equivalent to the concept of a \emph{$cat^2$-group}. Using that crossed modules are equivalent to $cat^1$-groups, that is using the equivalences of categories
\[
\text{$\cat^1$-$\Grp$}\simeq\Grpd(\Grp)\simeq\XMod(\Grp),
\]
we may view a crossed square as an internal crossed module in the category of crossed modules of groups. This means that we have equivalences
\[
\XSqr(\Grp)\simeq\text{$\cat^2$-$\Grp$}\simeq\Grpd^2(\Grp)\simeq\XMod(\XMod(\Grp)).
\]
In particular, the functor from $\Grpd^2(\Grp)$ to $\XSqr(\Grp)$ is given by normalisation; that is, given a double groupoid as in Figure~\ref{Figure:Normalisation} 
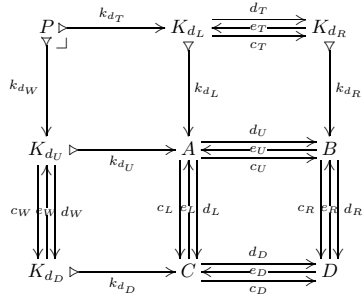
\begin{figure}

\resizebox{.4\textwidth}{!}
{
\xymatrixcolsep{4pc}
\xymatrixrowsep{4pc}
\xymatrix{
P \pullbackcorner \ar@{{ |>}->}[r]^-{k_{d_T}} \ar@{{ |>}->}[d]_-{k_{d_W}} & K_{d_L} \ar@<1ex>[r]^-{d_T} \ar@<-1ex>[r]_-{c_T} \ar@{{ |>}->}[d]^-{k_{d_L}} & K_{d_R} \ar[l]|-{e_T} \ar@{{ |>}->}[d]^-{k_{d_R}}\\
K_{d_U} \ar@<1ex>[d]^-{d_W} \ar@<-1ex>[d]_-{c_W} \ar@{{ |>}->}[r]_-{k_{d_U}} & A \ar@<1ex>[d]^-{d_L} \ar@<-1ex>[d]_-{c_L} \ar@<1ex>[r]^-{d_U} \ar@<-1ex>[r]_-{c_U} & B \ar[l]|-{e_U} \ar@<1ex>[d]^-{d_R} \ar@<-1ex>[d]_-{c_R}\\
K_{d_D} \ar[u]|-{e_W} \ar@{{ |>}->}[r]_-{k_{d_D}} & C \ar[u]|-{e_L} \ar@<1ex>[r]^-{d_D} \ar@<-1ex>[r]_-{c_D} & D \ar[u]|-{e_R} \ar[l]|-{e_D}
}
}

\caption{The normalisation of a double groupoid}\label{Figure:Normalisation}
\end{figure}
where the outer square is obtained by taking kernels of the domain morphisms, the induced maps admit suitable internal actions induced by the conjugation in $A$, making it a crossed square. Similarly, a morphism of internal crossed squares is the (unique) normalisation of a morphism of double groupoids.

We return to the context of semi-abelian categories.

\begin{definition}
\label{defi:implicit defi of crossed square}
In a semi-abelian category $\A$ that satisfies \SH, an \emph{internal crossed square} is an internal crossed module in $\XMod(\A)$. This means that the category $\XSqr(\A)$ is defined as $\XMod(\XMod(\A))$.
\end{definition}

We would like to have an explicit description of an internal crossed square \emph{as a diagram in the underlying category $\A$} like in the case of groups, but this is far from straightforward. Certainly any double groupoid can be normalised to a commutative square as in Figure~\ref{Figure:Normalisation}, and it is also possible to deduce suitable actions. The normalisation is the \lq\lq underlying commutative square\rq\rq\ of the given crossed module of crossed modules, so we have a forgetful functor. This raises the question, what kind of structure needs to be added to the square so that this forgetful functor can be lifted to an equivalence. In other words, we are confronted with a kind of \emph{descent problem}. Part of the aim of the paper is to answer this question, and we actually manage to provide partial answers in several special cases: the concept of a weak crossed square---see Section~\ref{section:towards crossed squares through the non-abelian tensor product}---does it for groups, Lie algebras, and the case where we find a pairing that happens to be a suitable regular epimorphism. 

For now, let us consider a basic example and prove some preliminary results. In Section~\ref{Section Non-Abelian Tensor} we use the idea of a crossed square in the definition of the non-abelian tensor product.

\begin{example}\label{Example intersection}
Given two normal subobjects $M$, $N\lhd L$ of an object $L$ in a semi-abelian category with \SH, the square induced by taking their intersection (the pullback of their representing monomorphisms) carries a canonical crossed square structure. Indeed, by taking cokernels, any pullback square of normal monomorphisms is seen to be part of a $3\times 3$-diagram; replacing the kernels by kernel pairs we find a \lq\lq denormalised $3\times 3$-diagram\rq\rq\ as in~\cite{Bou03}. If~$(R,r_1,r_2)$ and $(S,s_1,s_2)$ denote the respective denormalisations of $M$ and $N$ (the kernel pairs of the cokernels of their inclusions into $L$), then the upper left corner of this diagram is a double equivalence relation as in Figure~\ref{Figure Square} on the left, which may be constructed as the pullback on the right. 
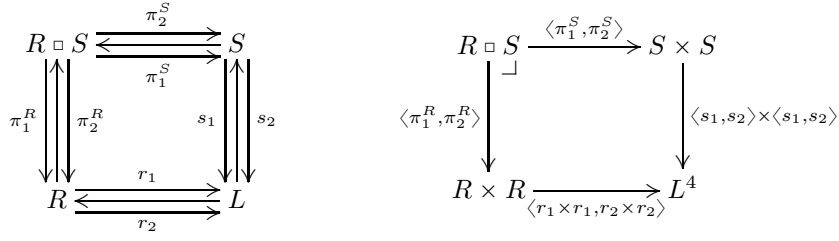
\begin{figure}
\[
\vcenter{\xymatrix@=2em{R\square S \ar@<1ex>[rr]^-{\pi_2^S} \ar@<-1ex>[rr]_-{\pi_1^S}\ar@<1ex>[dd]^-{\pi_2^R}\ar@<-1ex>[dd]_-{\pi_1^R}&&S\ar[ll]\ar@<1ex>[dd]^-{s_2}\ar@<-1ex>[dd]_-{s_1}\\\\
R \ar[uu]\ar@<1ex>[rr]^-{r_1}\ar@<-1ex>[rr]_-{r_2}&& L\ar[ll]\ar[uu]}}
\qquad\qquad
\vcenter{\xymatrix@=4em{R\square
S \pullbackcorner \ar[r]^-{\langle\pi_{1}^{S},\pi_{2}^{S}\rangle} \ar[d]_-{\langle\pi_{1}^{R},\pi_{2}^{R}\rangle} & S \times S\ar[d]^-{\langle s_1,s_2\rangle\times\langle s_1,s_2\rangle}\\
R\times R\ar[r]_-{\langle r_1\times r_1
,r_2\times r _2\rangle}& L^4 }}
\]
\caption{The parallelistic double equivalence relation $R\square S$ and its construction}\label{Figure Square}
\end{figure}
Hence it is a double groupoid, which forgets by normalisation to the given intersection of normal subobjects, viewed as crossed modules. 
\end{example}

\subsection{The diagonal internal crossed module in an internal crossed square.}\label{rmk:definitions of diagonal action}

We find ourselves in a semi-abelian category that satisfies \SH. Referring to Figure~\ref{Figure:Normalisation} we will write $j$ for the diagonal of the upper left square, with $(D,A,c,d,e)$ the reflexive graph structure induced diagonally in the lower right square ($c=c_D\comp c_L$, $d=d_D\comp d_L$, $e=e_L\comp e_D$) and $\lambda=c\comp j$.

Given an internal double groupoid as in Figure~\ref{Figure:Normalisation} we can define an action of $D$ on $P$ in the following different ways:
\begin{itemize}
\item First of all we can define it as the dotted arrow in the diagram
\begin{equation}
\label{diag:first definition of diagonal action}
\vcenter{
\xymatrixcolsep{5pc}
\xymatrix{
D\flat P \ar@{.>}[d]_-{\xi} \ar[r]^-{e\flat k} & A\flat A \ar[d]^-{\chi_A} \ar[r]^-{\langle d_U,d_L\rangle\flat \langle d_U,d_L\rangle} & (B\times C)\flat(B\times C) \ar[d]^-{\chi_{(B\times C)}}\\
P \ar@{{ |>}->}[r]_-{j} & A \ar[r]_-{\langle d_U,d_L\rangle} & B\times C
}
}
\end{equation}
where $j=k_{d_L}\comp k_{d_T}=k_{d_U}\comp k_{d_W}$ is the kernel of $\langle d_U,d_L\rangle$;
\item next we induce it through any of the diagrams
\[
\vcenter{
\xymatrixcolsep{3pc}
\xymatrix{
D\flat P \ar@{.>}[d]_-{\xi} \ar[r]^-{e_R\flat k_{d_W}} & B\flat K_{d_U} \ar[d]^-{\psi_U} \ar[r]^-{d_R\flat d_W} & D\flat K_{d_D} \ar[d]^-{\psi_D}\\
P \ar@{{ |>}->}[r]_-{k_{d_W}} & K_{d_U} \ar[r]_-{d_W} & K_{d_D}
}
}
\quad
\vcenter{
\xymatrixcolsep{3pc}
\xymatrix{
D\flat P \ar@{.>}[d]_-{\xi} \ar[r]^-{e_D\flat k_{d_T}} & C\flat K_{d_L} \ar[d]^-{\psi_L} \ar[r]^-{d_D\flat d_T} & D\flat K_{d_R} \ar[d]^-{\psi_R}\\
P \ar@{{ |>}->}[r]_-{k_{d_T}} & K_{d_L} \ar[r]_-{d_T} & K_{d_R}.
}
}
\]
\end{itemize}
Notice that these three actions are uniquely determined by the universal property of the kernels and that they are actually the same: indeed it suffices to show that if such a $\xi$ makes one of the previous diagrams commute, then also the other does. This is easily seen via the diagrams
\begin{align*}
\xymatrixcolsep{3pc}
\vcenter{\xymatrix{
D\flat P \ar@{.>}[d]_-{\xi} \ar[r]^-{e_R\flat k_{d_W}} & B\flat K_{d_U} \ar[d]^-{\psi_U} \ar[r]^-{e_U\flat k_{d_U}} & A\flat A \ar[d]^-{\chi_A}\\
P \ar@{{ |>}->}[r]_-{k_{d_W}} & K_{d_U} \ar@{{ |>}->}[r]_-{k_{d_U}} & A
}}
\;{\text{and}}\;
\xymatrixcolsep{3pc}
\vcenter{\xymatrix{
D\flat P \ar@{.>}[d]_-{\xi} \ar[r]^-{e_D\flat k_{d_T}} & C\flat K_{d_L} \ar[d]^-{\psi_L} \ar[r]^-{e_D\flat k_{d_L}} & A\flat A \ar[d]^-{\chi_A}\\
P \ar@{{ |>}->}[r]_-{k_{d_T}} & K_{d_L} \ar@{{ |>}->}[r]_-{k_{d_L}} & A.
}}
\end{align*}
In each rectangle the square on the right commutes by Remark~\ref{rmk:acting is like conjugating in the semidirect product}. Therefore, since $k_{d_U}$ and $k_{d_L}$ are monomorphisms, the square on the left in each rectangle commutes if and only if the corresponding outer rectangle does. These, however, coincide with the left hand square in~\eqref{diag:first definition of diagonal action}. Hence the three definitions are the same.

We can also define an action of $D$ on $P$ in the following, \emph{a priori} different, way. Consider the diagram
\[
\xymatrix{
P \ar@{{ |>}->}[r]^-{k_{d_T}} \ar@{.>}[rd]|l \ar@{{ |>}->}[d]_-{k_{d_W}} & K_{d_L} \ar[d] \ar@{=}[r] & K_{d_L} \ar@{{ |>}->}[dd]^-{k_{d_L}}\\
K_{d_U} \ar[r] \ar@{=}[d] & K_{d_L}\vee K_{d_U} \ar@{{ |>}->}[rd]|-{k_{d_L}\vee k_{d_U}}\\
K_{d_U} \ar@{{ |>}->}[rr]_-{k_{d_U}} && A.
}
\]
Consider the diagonal split extension
\[
\xymatrixcolsep{3pc}
\xymatrix{
K_{d_L}\vee K_{d_U} \ar@{{ |>}->}[r]^-{k_d} & A \ar@<.5ex>[r]^-{d} & D \ar@<.5ex>[l]^-{e}
}
\]
defined through Figure~\ref{Figure:Normalisation}. Notice that $K_{d_L}\vee K_{d_U}$ is the kernel of $d$ (and $k_d={k_{d_L}\vee k_{d_U}}$) because of Lemma~\ref{lemma:the kernel of the diagonal of a pushout of two regular epimorphisms is the join of the two kernels} and Lemma~\ref{lemma:epimorphism of points is pushout}. Since $k_d\comp l$ is a normal monomorphism, we can construct the diagram
\begin{equation}
\label{diag:definition of diagonal groupoid}
\vcenter{
\xymatrixcolsep{3pc}
\xymatrix{
P \ar@{{ |>}->}[d]_-{l} \ar@{{ |>}.>}[r]^-{k_{\Hat{d}}} & \Hat{A} \ar@{.>}[d]_-{l\rtimes 1_D} \ar@<.5ex>@{.>}[r]^-{\Hat{d}} & D \ar@{=}[d] \ar@<.5ex>@{.>}[l]^-{\Hat{e}}\\
K_{d_L}\vee K_{d_U} \ar@{{ |>}->}[r]_-{k_d} & A \ar@<.5ex>[r]^-{d} & D \ar@<.5ex>[l]^-{e}\\
}
}
\end{equation}
through Lemma~2.6 in~\cite{CGVdL15a}, which gives us an action of $D$ on $P$.

\begin{lemma}
\label{lemma:those actions are the same}
The two internal actions defined above coincide.
\end{lemma}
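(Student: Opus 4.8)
The plan is to show that both actions, once composed with the monomorphism $j\colon P\to A$, agree with the restriction of the conjugation action $\chi_A$; since $j$ is a kernel and hence a monomorphism, this forces the two actions to coincide. For the first action $\xi$ of~\eqref{diag:first definition of diagonal action} there is nothing to do: commutativity of the left-hand square of that diagram is exactly the identity $j\comp\xi=\chi_A\comp(e\flat j)$, which exhibits $\xi$ as the restriction of conjugation in $A$ along $e\colon D\to A$ and $j\colon P\to A$.

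The real content is to identify the second action, call it $\xi'$, with the same restriction. By its construction through Lemma~2.6 in~\cite{CGVdL15a}, $\xi'$ is the action associated to the split extension $0\to P\xrightarrow{k_{\hat d}}\hat A\xrightarrow{\hat d}D\to 0$ appearing in~\eqref{diag:definition of diagonal groupoid}; hence, by Remark~\ref{rmk:acting is like conjugating in the semidirect product}, it satisfies $k_{\hat d}\comp\xi'=\chi_{\hat A}\comp(\hat e\flat k_{\hat d})$, that is, it is conjugation inside $\hat A$. I would then transport this identity along the comparison monomorphism $m\coloneq l\rtimes 1_D\colon\hat A\to A$ of~\eqref{diag:definition of diagonal groupoid}. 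Reading off that diagram, $m$ is a morphism of points over $D$, so $m\comp\hat e=e$ and $m\comp k_{\hat d}=k_d\comp l=j$.

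The only general fact required is the naturality of conjugation: for every morphism $f\colon A\to A'$ one has $f\comp\chi_A=\chi_{A'}\comp(f\flat f)$, which follows at once from $\chi_A=\binom{1_A}{1_A}\comp k_{A,A}$ together with the naturality of $k_{-,-}$. Applying this to $m$ and using bifunctoriality of $\flat$ gives
\begin{align*}
j\comp\xi'=m\comp k_{\hat d}\comp\xi' &= m\comp\chi_{\hat A}\comp(\hat e\flat k_{\hat d})\\
&=\chi_A\comp(m\flat m)\comp(\hat e\flat k_{\hat d})=\chi_A\comp(e\flat j),
\end{align*}
which is precisely the defining equation of $\xi$. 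Cancelling the monomorphism $j$ then yields $\xi=\xi'$.

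The main obstacle I anticipate is bookkeeping rather than conceptual. One has to check carefully that the map $m=l\rtimes 1_D$ really is a morphism of the two split extensions in~\eqref{diag:definition of diagonal groupoid}, so that $m\comp\hat e=e$ and $m\comp k_{\hat d}=j$ hold as claimed, and that the action delivered by Lemma~2.6 in~\cite{CGVdL15a} is indeed the one characterised as conjugation in $\hat A$ through Remark~\ref{rmk:acting is like conjugating in the semidirect product}. Once these identifications are pinned down, the remaining argument is the short naturality computation above.
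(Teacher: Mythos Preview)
Your proof is correct and follows essentially the same route as the paper: both compose with the monomorphism $l\rtimes 1_D$ (your $m$) to reduce the comparison of the two actions to the single identity $j\comp\xi'=\chi_A\comp(e\flat j)$, and then cancel the mono $j$. The only cosmetic difference is that you invoke Remark~\ref{rmk:acting is like conjugating in the semidirect product} and naturality of $\chi$ to obtain this identity, whereas the paper verifies directly that $\xi$ satisfies the defining equation $k_{\hat d}\comp\xi=\binom{\hat e}{k_{\hat d}}\comp k_{D,P}$ of the action associated to the point $(\hat d,\hat e)$---which, after applying $l\rtimes 1_D$, unfolds to the very same computation.
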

\begin{proof}
In order to show this, it suffices to prove that the equivalence $\Act(\A)\simeq \Pt(\A)$ sends the point constructed in~\eqref{diag:definition of diagonal groupoid} into the action $\xi$ uniquely defined through the commutativity of~\eqref{diag:first definition of diagonal action}. To do this, consider the diagram
\[
\xymatrixcolsep{3pc}
\xymatrix{
D\flat P \ar@{.>}[d] \ar@{{ |>}->}[r]^-{k_{D,P}} & D+P \ar[d]_-{\binom{\Hat{e}}{k_{\Hat{d}}}} \ar@<.5ex>[r]^-{\binom{1_D}{0}} & D \ar@{=}[d] \ar@<.5ex>[l]^-{i_D}\\
P \ar@{{ |>}->}[r]_-{k_{\Hat{d}}} & \Hat{A} \ar@<.5ex>[r]^-{\Hat{d}} & D. \ar@<.5ex>[l]^-{\Hat{e}}
}
\]
Let us prove that $k_{\Hat{d}}\comp\xi=\binom{\Hat{e}}{k_{\Hat{d}}}\comp k_{D,P}$. The map $l\rtimes 1_D$ is a monomorphism since $l$ is so, therefore we need to show that $(l\rtimes 1_D)\comp k_{\Hat{d}}\comp\xi=(l\rtimes 1_D)\comp\binom{\Hat{e}}{k_{\Hat{d}}}\comp k_{D,P}$. The left hand side is equal to $k\comp \xi$ which in turn (by definition of $\xi$) is $\chi_A\comp (e\flat k)$, whereas the chain of equalities
\begin{align*}
(l\rtimes 1_D)\comp\binom{\Hat{e}}{k_{\Hat{d}}}\comp k_{D,P}&=\binom{(l\rtimes 1_D)\circ\Hat{e}}{(l\rtimes 1_D)\circ k_{\Hat{d}}}\comp k_{D,P}
=\binom{e}{k}\comp k_{D,P}=\binom{1_A}{1_A}\comp(e+k)\comp k_{D,P}\\
&=\binom{1_A}{1_A}\comp k_{A,A}\comp (e\flat k)=\chi_A\comp (e\flat k)
\end{align*}
gives us the right hand side.
\end{proof}

\begin{proposition}
\label{prop:the diagonal is a crossed module}
In the situation above, $(P\xrightarrow{\lambda}D,\xi)$ is an internal crossed module. 
\end{proposition}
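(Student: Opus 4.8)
The plan is to invoke the equivalence $\XMod(\A)\simeq\Grpd(\A)$ together with Lemma~\ref{lemma: mult struc <=> HUQ=0}: the precrossed module $(P\xrightarrow{\lambda}D,\xi)$ is a crossed module precisely when its associated reflexive graph is an internal groupoid, which in turn holds if and only if the Higgins commutator of the kernels of its domain and codomain maps is trivial. So I would first pin down this reflexive graph. By Lemma~\ref{lemma:those actions are the same}, the point underlying $(P\xrightarrow{\lambda}D,\xi)$ is exactly the one appearing in~\eqref{diag:definition of diagonal groupoid}, namely $(\Hat{d}\colon\Hat{A}\to D,\Hat{e})$ with $K_{\Hat{d}}=P$, sitting inside the diagonal reflexive graph $(D,A,c,d,e)$ through the monomorphism $l\rtimes 1_D$. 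Its codomain map must then be $\Hat{c}=c\comp(l\rtimes 1_D)$, since both sides restrict to $1_D$ along $\Hat{e}$ and to $\lambda=c\comp j$ along $k_{\Hat{d}}$; thus $(D,\Hat{A},\Hat{d},\Hat{c},\Hat{e})$ is a sub-reflexive-graph of the diagonal one, and it remains to show $[K_{\Hat{d}},K_{\Hat{c}}]=0$.

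Next I would transport this commutator into $A$. As $l\rtimes 1_D$ is monic with $\Hat{d}=d\comp(l\rtimes 1_D)$ and $\Hat{c}=c\comp(l\rtimes 1_D)$, the subobjects $K_{\Hat{d}}=P$ and $K_{\Hat{c}}$ are carried into $K_d$ and $K_c$; computing the image of the Higgins commutator along the monomorphism and applying Proposition~\ref{Higgins properties}(3) reduces the claim to the single equality $[P,K_c]=0$ inside $A$. Here $j$ factors through both $k_{d_L}$ and $k_{d_U}$, so $P\leq K_{d_L}$ and $P\leq K_{d_U}$, while $K_c=K_{c_L}\vee K_{c_U}$ by the same argument that identifies $K_d$ with $K_{d_L}\vee K_{d_U}$.

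I would then expand $[P,K_{c_L}\vee K_{c_U}]$ using the distributivity law Proposition~\ref{Higgins properties}(6) into $[P,K_{c_L}]\vee[P,K_{c_U}]\vee[P,K_{c_L},K_{c_U}]$. The two binary summands vanish by monotonicity: from $P\leq K_{d_L}$ we get $[P,K_{c_L}]\leq[K_{d_L},K_{c_L}]=0$ because the left reflexive graph is a groupoid, and from $P\leq K_{d_U}$ we get $[P,K_{c_U}]\leq[K_{d_U},K_{c_U}]=0$ because the top one is. For the ternary term, again $P\leq K_{d_L}$ gives $[P,K_{c_L},K_{c_U}]\leq[K_{d_L},K_{c_L},K_{c_U}]\leq[K_{d_L},K_{c_L},A]$, and since $K_{d_L},K_{c_L}\lhd A$ satisfy $[K_{d_L},K_{c_L}]=0$, the condition \SH\ forces $[K_{d_L},K_{c_L},A]=0$. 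Hence $[P,K_c]=0$, and since $l\rtimes 1_D$ is monic this yields $[K_{\Hat{d}},K_{\Hat{c}}]=0$, so the reflexive graph is a groupoid and $(P\xrightarrow{\lambda}D,\xi)$ is an internal crossed module.

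I expect the ternary commutator $[P,K_{c_L},K_{c_U}]$ to be the main obstacle and the decisive point of the argument. The \lq\lq cross\rq\rq\ commutators mixing horizontal and vertical kernels, such as $[K_{d_U},K_{c_L}]$, need not vanish, so the full diagonal graph $(D,A,c,d,e)$ is in general \emph{not} a groupoid; it is precisely the passage to the intersection $P$---which lies simultaneously in $K_{d_L}$ and $K_{d_U}$---combined with the Smith-is-Huq hypothesis that tames this term and makes the diagonal \emph{crossed module} (as opposed to the diagonal graph) work out.
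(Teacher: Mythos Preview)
Your argument is correct and follows essentially the same route as the paper: pass to the reflexive graph $(\Hat{A},D,\Hat{d},\Hat{c},\Hat{e})$ induced by~\eqref{diag:definition of diagonal groupoid}, use $K_{\Hat{c}}\leq K_c$ to reduce to $[P,K_c]=0$ in $A$, expand via the join formula, and kill the ternary term with \SH. Your treatment of the ternary commutator $[P,K_{c_L},K_{c_U}]\leq[K_{d_L},K_{c_L},A]=0$ via \SH\ is in fact more transparent than the paper's displayed chain, which bounds it by $[K_{c_U},K_{c_U},A]$---an apparent slip, since the intended bound (needed for the ``$=0$'') is $[K_{d_U},K_{c_U},A]$, vanishing by \SH\ exactly as you argue.
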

\begin{proof}
Notice that if we define $\Hat{c}\coloneq c\comp (l\rtimes 1_D)$, then we have that $\Hat{c}\comp k_{\Hat{d}}=c\comp j=\lambda$. Therefore it suffices to show that the first row in~\eqref{diag:definition of diagonal groupoid} is actually a groupoid, once it is endowed with $\Hat{c}$ as a second leg. In order to prove this, since $P=K_{\Hat{d}}$, by Lemma~\ref{lemma: mult struc <=> HUQ=0} we only need to show that $[P,K_{\Hat{c}}]$ is trivial. But $K_{\Hat{c}}\leq K_c$ implies $[P,K_{\Hat{c}}]\leq [P,K_c]$, hence it suffices that $[P,K_c]=0$. This follows from the chain of inequalities of subobjects
\begin{align*}
[P,K_c]&= [P,K_{c_U}\vee K_{c_L}]
= [P,K_{c_U}]\vee [P,K_{c_L}]\vee [P,K_{c_U},K_{c_L}]\\
&\leq [K_{d_U},K_{c_U}]\vee [K_{d_L},K_{c_L}]\vee [K_{c_U},K_{c_U},A]
= 0\vee 0\vee 0= 0
\end{align*}
which we have by Lemma~\ref{lemma:the kernel of the diagonal of a pushout of two regular epimorphisms is the join of the two kernels} and Proposition~\ref{Higgins properties}.
\end{proof}

\begin{proposition}
\label{prop:every morphism of internal crossed squares has an underlying morphism of internal crossed modules between the diagonals}
Given a morphism of internal double groupoids
\[
\xymatrix@!0@=2em{
A \ar@<1ex>[dd] \ar@<-1ex>[dd] \ar@<1ex>[rr] \ar@<-1ex>[rr] && B \ar[ll] \ar@<1ex>[dd] \ar@<-1ex>[dd] &&&& A' \ar@<1ex>[dd] \ar@<-1ex>[dd] \ar@<1ex>[rr] \ar@<-1ex>[rr] && B' \ar[ll] \ar@<1ex>[dd] \ar@<-1ex>[dd]\\
&& \ar@{}[rrrr]^(.25){}="a"^(.75){}="b" \ar "a";"b"^-{\morph{\alpha & \beta}{\gamma & \delta}} &&&& \\
C \ar[uu] \ar@<1ex>[rr] \ar@<-1ex>[rr] & & D \ar[uu] \ar[ll] &&&& C' \ar[uu] \ar@<1ex>[rr] \ar@<-1ex>[rr] && D' \ar[uu] \ar[ll]
}
\]
consider the unique morphism of internal crossed squares induced between their normalisations, and denote $\rho\colon P\to P'$ the upper-left component. Then 
\[
(P\xrightarrow{\lambda}D,\xi)\xlongrightarrow{(\rho,\delta)}(P'\xrightarrow{\lambda'}D',\xi')
\]
is a morphism of internal crossed modules.
\end{proposition}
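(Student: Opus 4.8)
The plan is to verify the two defining conditions for $(\rho,\delta)$ to be a morphism of internal crossed modules: commutativity of the underlying square, $\lambda'\comp\rho=\delta\comp\lambda$, and compatibility with the actions, $\rho\comp\xi=\xi'\comp(\delta\flat\rho)$. The key structural fact I would use throughout is that, by the way the normalisation functor acts on morphisms, $\rho$ is nothing but the restriction of the upper-left component $\alpha\colon A\to A'$ to the kernels of the diagonal domain maps. Writing $j\colon P\to A$ and $j'\colon P'\to A'$ for the diagonal inclusions---so that $j$ is the kernel of $\langle d_U,d_L\rangle$ and $P=K_{d_U}\cap K_{d_L}$---this fact reads $j'\comp\rho=\alpha\comp j$, which is where both conditions will ultimately be anchored.

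For the commutativity of the square I would run a direct diagram chase. Since $\lambda=c\comp j=c_D\comp c_L\comp j$ and $\lambda'=c'_D\comp c'_L\comp j'$, it suffices to move $\delta$ past the diagonal codomain map using that $\morph{\alpha&\beta}{\gamma&\delta}$ respects all four structure directions: the bottom-horizontal identity $\delta\comp c_D=c'_D\comp\gamma$ and the left-vertical identity $\gamma\comp c_L=c'_L\comp\alpha$ combine to give $\delta\comp c=c'\comp\alpha$, and then $\delta\comp\lambda=c'_D\comp c'_L\comp\alpha\comp j=c'\comp j'\comp\rho=\lambda'\comp\rho$.

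The compatibility with the actions is the real content, and here I would exploit the conjugation description of the diagonal action from~\eqref{diag:first definition of diagonal action}, which characterises $\xi$ by $j\comp\xi=\chi_A\comp(e\flat j)$, and likewise $j'\comp\xi'=\chi_{A'}\comp(e'\flat j')$. As $j'$ is a monomorphism, it is enough to check $j'\comp\rho\comp\xi=j'\comp\xi'\comp(\delta\flat\rho)$. The left-hand side unfolds to $\alpha\comp j\comp\xi=\alpha\comp\chi_A\comp(e\flat j)$. For the right-hand side I would first record that the diagonal unit is respected, $e'\comp\delta=\alpha\comp e$, which follows from $e'_D\comp\delta=\gamma\comp e_D$ and $e'_L\comp\gamma=\alpha\comp e_L$ together with $e=e_L\comp e_D$ and $e'=e'_L\comp e'_D$; combining this with $j'\comp\rho=\alpha\comp j$ yields $j'\comp\xi'\comp(\delta\flat\rho)=\chi_{A'}\comp\bigl((\alpha\comp e)\flat(\alpha\comp j)\bigr)=\chi_{A'}\comp(\alpha\flat\alpha)\comp(e\flat j)$.

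The two sides therefore agree as soon as $\chi_{A'}\comp(\alpha\flat\alpha)=\alpha\comp\chi_A$, i.e. the naturality of conjugation in the object, and this is the one step I expect to require care---though it is in the end purely formal. Unfolding $\chi_A=\binom{1_A}{1_A}\comp k_{A,A}$, using that $\alpha\flat\alpha$ is the restriction of $\alpha+\alpha$ to $A\flat A$ so that $k_{A',A'}\comp(\alpha\flat\alpha)=(\alpha+\alpha)\comp k_{A,A}$, and invoking the naturality of the codiagonal $\binom{1_{A'}}{1_{A'}}\comp(\alpha+\alpha)=\alpha\comp\binom{1_A}{1_A}$, one obtains $\chi_{A'}\comp(\alpha\flat\alpha)=\alpha\comp\chi_A$ immediately. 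Assembling the three computations shows that $(\rho,\delta)$ preserves both the underlying square and the action, hence is a morphism of internal crossed modules.
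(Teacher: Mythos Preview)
Your proof is correct and follows essentially the same route as the paper: both arguments use the conjugation description of the diagonal action from~\eqref{diag:first definition of diagonal action}, reduce the action compatibility to the naturality square $\chi_{A'}\comp(\alpha\flat\alpha)=\alpha\comp\chi_A$, and conclude via the fact that $j'$ is a monomorphism. The paper packages this as a cube diagram whose five known faces force the sixth, whereas you unfold the same chase equationally and supply more detail (the explicit verification of $\delta\comp\lambda=\lambda'\comp\rho$, of $e'\comp\delta=\alpha\comp e$, and of the naturality of $\chi$), but the underlying argument is identical.
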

\begin{proof}
We want to show the commutativity of the diagrams
\begin{align*}
\vcenter{\xymatrix{
P \ar[d]_-{\rho} \ar[r]^-{\lambda} & D \ar[d]^-{\delta}\\
P' \ar[r]_-{\lambda'} & D'
}}
\qquad\text{and}\qquad
\vcenter{\xymatrix{
D\flat P \ar[d]_-{\delta\flat\rho} \ar[r]^-{\xi} & P \ar[d]^-{\rho}\\
D'\flat P' \ar[r]_-{\xi'} & P'.
}}
\end{align*}
The first one is obvious by construction of the map $\rho$. For the second one we need to use one of the explicit constructions for the actions $\xi$ and $\xi'$, in particular the one depicted in~\eqref{diag:first definition of diagonal action}. From this we construct the cube
\[
\resizebox{.3\textwidth}{!}
{\xymatrix@!0@=4em{D\flat P 
\ar[rr]^-{e\flat k}
\ar[dd]_-{\xi}
\ar[rd]|-{\delta\flat\rho}
&&
A\flat A
\ar[dd]_(.25){\chi_A}|-{\hole}
\ar[rd]^-{\alpha\flat\alpha}\\
&
D'\flat P'
\ar[rr]_(.33){e'\flat k'}
\ar[dd]_(.33){\xi'}
&&
A'\flat A'
\ar[dd]^-{\chi_{A'}}
\\
P
\ar@{{ |>}->}[rr]_(.25){k'}|-{\hole}
\ar[rd]_-{\rho}
&&
A
\ar[rd]^-{\alpha}
\\
&
P'
\ar@{{ |>}->}[rr]_-{k'}
&&
A'.}}
\]
We want to prove that the face on the left commutes. Since we already know that every other face commutes, this follows from the fact that $k'$ is a monomorphism.
\end{proof}

\section{Construction of the non-abelian tensor product}\label{Section Non-Abelian Tensor}

\subsection{The case of groups.}\label{subsection:groups case}

First of all, let us examine what happens in the category $\Grp$. The aim of this subsection is to explain how to obtain the non-abelian tensor product of two coterminal crossed modules of groups, without passing through set-theoretical constructions.

Let $M$ and $N$ be two groups acting on each other via $\xi^M_N\colon M\flat N\to N$ and $\xi^N_M\colon {N\flat M\to M}$. Denote ${}^mn$ the action of $m\in M$ on $n\in N$, and ${}^nm$ the action of $n\in N$ on $m\in M$.

\begin{definition}[\cite{BL87}]
\label{defi:non-abelian tensor product of groups}
Given two groups $M$ and $N$ acting on each other (and on themselves by conjugation) we define their \emph{non-abelian tensor product} $M\otimes N$ as the group generated by the symbols $m\otimes n$ for $m\in M$ and $n\in N$, subject to the relations
\[
(mm')\otimes n=({}^mm'\otimes {}^mn)(m\otimes n)
\qquad\qquad
m\otimes (nn')=(m\otimes n)({}^nm\otimes {}^nn')
\]
for all $m$, $m'\in M$ and $n$, $n'\in N$.
\end{definition}

Although the above definition works for arbitrary actions, the main results of~\cite{BL87} that we are interested in, always assume that those actions are \emph{compatible} in a precise sense. Such a pair of compatible actions $(\xi^M_N,\xi^N_M)$ is equivalent to the datum of a third object $L$ and two crossed module structures $\mu\colon M\to L$ and $\nu\colon N\to L$; by~\cite{dMVdL19}, this is true both in the case of groups and in the general semi-abelian setting. For the sake of simplicity, from now on we will do the same: we shall always deal with non-abelian tensor products of pairs of compatible actions, and we shall always assume that those actions are induced by a pair of coterminal crossed modules. In particular, we formulate all definitions and results in terms of crossed modules. For instance:

\begin{remark}
The tensor product of two $L$-crossed modules carries a natural $L$-crossed module structure. Thus it may be seen as a bifunctor
\[
\otimes\colon {\XMod_L(\Grp )\times \XMod_L(\Grp )\to \XMod_L(\Grp )}.
\]
\end{remark}

The non-abelian tensor product satisfies a universal property which determines it: let us recall Proposition~2.15 from~\cite{BL87}.

\begin{proposition}\label{prop:prop2.15 in BL87}
Let $(M\xrightarrow{\mu}L,\xi_M)$ and $(N\xrightarrow{\nu}L,\xi_N)$ be crossed modules, so that $M$ and $N$ act on both $M$ and $N$ via $L$. Then there is a crossed square as on the left
\[
\xymatrix{
M\otimes N \ar[r]^-{\pi_M} \ar[d]_-{\pi_N} & M \ar[d]^-{\mu}\\
N \ar[r]_-{\nu} & L
}
\qquad\qquad
\xymatrix{
P \ar[r]^-{p_M} \ar[d]_-{p_N} & M \ar[d]^-{\mu}\\
N \ar[r]_-{\nu} & L
}
\]
where $\pi_M(m\otimes n)=m{}^{n}m^{-1}$, $\pi_N(m\otimes n)={}^{m}nn^{-1}$ and $h(m,n)=m\otimes n$. This crossed square is universal in the sense that it satisfies the following two equivalent conditions:
\begin{tfae}
\item If the square on the right is another crossed square (with the same $\mu$ and $\nu$), then there is a unique morphism of crossed squares $\morph{\phi & 1_M}{1_N & 1_L}$ from the left-hand to the right-hand crossed square which is the identity on $M$, $N$ and $L$ and where $\phi\colon {M\otimes N\to P}$.
\item The diagram of crossed squares in Figure~\ref{Fig:pushout of crossed square in Prop 2.15} 
\begin{figure}

\resizebox{.45\textwidth}{!}
{
\xymatrix@=1em{
0 \ar[rr] \ar[dd] && 0 \ar[dd] &&&& 0 \ar[rr] \ar[dd] && M \ar[dd]^-{\mu}\\
&&& \ar[rr]^-{\morph{1_0 & 0}{1_0 & 1_L}} && \\
0 \ar[rr] && L &&&& 0 \ar[rr] && L \\
& \ar[dd]_-{\morph{1_0 & 1_0}{0 & 1_L}} &&&&&& \ar[dd]^-{\morph{0 & 1_M}{0 & 1_L}} \\
\\
& &&&& && \\
0 \ar[rr] \ar[dd] && 0 \ar[dd] &&&& M\otimes N \ar[rr]^-{\pi^{M\otimes N}_M} \ar[dd]_-{\pi^{M\otimes N}_N} && M \ar[dd]^-{\mu}\\
&&& \ar[rr]_-{\morph{0 & 0}{1_N & 1_L}} && \\
N \ar[rr]_-{\nu} && L &&&& N \ar[rr]_-{\nu} && L \\
}}
\caption{A pushout of crossed squares}\label{Fig:pushout of crossed square in Prop 2.15}
\end{figure}
is a pushout in $\XSqr(\Grp)$.
\end{tfae}
\end{proposition}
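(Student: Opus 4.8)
The plan is to treat the three assertions packaged in the statement one at a time: first that the left-hand square genuinely carries a crossed square structure, then that it satisfies the universal property (i), and finally that (i) and (ii) are equivalent.

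For the first assertion I would begin by checking that $\pi_M$ and $\pi_N$ are well-defined group homomorphisms, i.e.\ that the assignments $m\otimes n\mapsto m\,{}^{n}m^{-1}$ and $m\otimes n\mapsto {}^{m}n\,n^{-1}$ respect the two families of defining relations of $M\otimes N$; this is a direct computation exploiting that $M$ and $N$ act through $L$ via $\mu$ and $\nu$. The action of $L$ on $M\otimes N$ I would define on generators by ${}^{l}(m\otimes n)={}^{l}m\otimes{}^{l}n$, verify that it is well defined, and check that it turns $\pi_M$, $\pi_N$ and their common composite $\mu\comp\pi_M=\nu\comp\pi_N$ into $L$-crossed modules, which is (X.1). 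Axiom (X.4) holds by the very definition of the action; (X.0) is exactly the pair of defining relations once we set $h(m,n)=m\otimes n$; and (X.2)--(X.3) follow by evaluating $\pi_M$ and $\pi_N$ on generators and on the elements $h(p_M(p),n)$ and $h(m,p_N(p))$. The one subtlety is that (X.0) as written refers to the $L$-actions, so I would invoke Remark~\ref{rmk:h of a crossed square is a crossed pairing} to pass freely between (X.0) and the assertion that $h$ is a crossed pairing.

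For (i) the decisive point is that the defining relations of $M\otimes N$ are precisely the crossed-pairing identities. Given any competing crossed square with top-left corner $P$ and comparison map $h\colon M\times N\to P$, Remark~\ref{rmk:h of a crossed square is a crossed pairing} shows $h$ is a crossed pairing, so $m\otimes n\mapsto h(m,n)$ respects the relations and extends to a homomorphism $\phi\colon M\otimes N\to P$. I would then confirm that $\phi$ is compatible with $\pi_M$, $\pi_N$, with the $L$-action, and with the two pairings, so that $\morph{\phi & 1_M}{1_N & 1_L}$ is a morphism of crossed squares; uniqueness is immediate, since $M\otimes N$ is generated by the symbols $m\otimes n$, on which $\phi$ is forced.

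For the equivalence (i)$\iff$(ii) I would first unwind what a cocone under the span top-right~$\leftarrow$~top-left~$\rightarrow$~bottom-left into a test crossed square $T$ amounts to: group morphisms $M\to M_T$ and $N\to N_T$ together with a common $L\to L_T$, all compatible with the crossed square structure of $T$. Here one records at the outset that $h_T$ is trivial as soon as one of its arguments is the unit, which is exactly what makes the two legs from the trivial corner compatible. Specialising to cocones whose components are the identities on $M$, $N$, $L$ and whose apex has corners exactly $(P,M,N,L)$ recovers precisely the crossed squares over $\mu$ and $\nu$ appearing in (i), with the induced map out of the apex being the $\phi$ of (i); this yields (ii)$\Rightarrow$(i). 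Conversely, for a general cocone the universal map $M\otimes N\to P_T$ is forced on generators by $m\otimes n\mapsto h_T(f_M(m),g_N(n))$, and the same crossed-pairing computation as in (i) shows it is well defined and a morphism of crossed squares, establishing the pushout property. I expect the genuine obstacle to be exactly this pushout analysis: the colimit in $\XSqr(\Grp)$ is emphatically not computed corner by corner, since the cornerwise pushout would leave a trivial top-left corner, and the entire force of the statement is that the $h$-pairing axioms compel the top-left corner of the pushout to be $M\otimes N$.
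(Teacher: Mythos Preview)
The paper does not actually prove this proposition: it is quoted verbatim from~\cite{BL87} (``let us recall Proposition~2.15 from~\cite{BL87}'') and used as input for the subsequent construction, so there is no proof in the paper to compare against. Your outline is the standard direct argument and is essentially correct; the three stages you describe (verify the crossed square axioms on generators, extend an arbitrary crossed pairing $h$ to a homomorphism $\phi$ via the universal crossed pairing, then identify cocones under the span with crossed squares over $\mu$ and $\nu$) are exactly the ingredients of the original Brown--Loday argument.

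One small point worth tightening in your (i)$\Rightarrow$(ii) direction: for a general cocone with apex $T=(P_T,M_T,N_T,L_T)$ and component maps $f_M\colon M\to M_T$, $g_N\colon N\to N_T$, $\ell\colon L\to L_T$, you need $(m,n)\mapsto h_T(f_M(m),g_N(n))$ to be a crossed pairing \emph{with respect to the $L$-actions on $M$ and $N$}, not the $L_T$-actions. This holds because $f_M$, $g_N$ and $\ell$ are morphisms of crossed modules, hence equivariant, but it deserves a line. You should also record that the resulting $\phi$ intertwines the $L$-action on $M\otimes N$ with the $L_T$-action on $P_T$ (pulled back along $\ell$), which again follows from axiom \X4 for $T$ and equivariance of $f_M$, $g_N$; this is the one compatibility that is not automatic from the generator-level definition of $\phi$.
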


We can reinterpret this result as a way to construct the non-abelian tensor product $M\otimes N$---namely, as the upper-left group in the pushout crossed square of  Figure~\ref{Fig:pushout of crossed square in Prop 2.15}. This process does not involve generators and relations and hence completely avoids the use of set-theoretical tools. In order to generalise this construction to $\XSqr(\A)$ we need the description in Section~\ref{Section:Double groupoids and double reflexive graphs} of pushouts of this kind in the category $\Grpd^2(\A)\simeq \XSqr(\A)$. Again, here and in what follows, $\A$ is a semi-abelian category that satisfies \SH.

\begin{construction}\label{Construction Tensor}
In a semi-abelian category with \SH, consider internal $L$-crossed modules $(M\xrightarrow{\mu}L,\xi_M)$ and ${(N\xrightarrow{\nu}L,\xi_N)}$ and their induced internal groupoid structures
\begin{align*}
\label{diag:two coterminal groupoid structures}
\vcenter{
\xymatrix{
 N \ar@{{ |>}->}[r]^-{k_N} & N\rtimes L \ar@<1ex>[r]^-{d_N} \ar@<-1ex>[r]_-{c_N} & L \ar[r]|-{e_M} \ar[l]|-{e_N} & M\rtimes L \ar@<1ex>[l]^-{d_M} \ar@<-1ex>[l]_-{c_M} & M.\ar@{{ |>}->}[l]_-{k_M}
}
}
\end{align*}
In $\Grpd^2(\A)$, we construct the span of Figure~\ref{Fig:induced span of double groupoids}; 
\begin{figure}

\resizebox{.75\textwidth}{!}{
\xymatrix{
C \ar@<1ex>[rr]^-{d_D} \ar@<-1ex>[rr]_-{c_D} \ar@<1ex>[dd]^-{1} \ar@<-1ex>[dd]_-{1} & & D \ar[ll]|-{e_D} \ar@<1ex>[dd]^-{1} \ar@<-1ex>[dd]_-{1} && D \ar@<1ex>[dd]^-{1} \ar@<-1ex>[dd]_-{1} \ar@<1ex>[rr]^-{1} \ar@<-1ex>[rr]_-{1} && D \ar[ll]|-{1} \ar@<1ex>[dd]^-{1} \ar@<-1ex>[dd]_-{1} && B \ar@<1ex>[dd]^-{d_R} \ar@<-1ex>[dd]_-{c_R} \ar@<1ex>[rr]^-{1} \ar@<-1ex>[rr]_-{1} && B \ar[ll]|-{1} \ar@<1ex>[dd]^-{d_R} \ar@<-1ex>[dd]_-{c_R}\\
&& \ar@{}[rr]^(.25){}="c"^(.75){}="d" \ar "d";"c"_-{\morph{e_D& 1_D}{e_D& 1_D}} && && \ar@{}[rr]^(.25){}="a"^(.75){}="b" \ar "a";"b"^-{\morph{e_R & e_R}{1_D & 1_D}} && \\
C \ar[uu]|-{1} \ar@<1ex>[rr]^-{d_D} \ar@<-1ex>[rr]_-{c_D} && D \ar[ll]|-{e_D} \ar[uu]|-{1} && D \ar[uu]|-{1} \ar@<1ex>[rr]^-{1} \ar@<-1ex>[rr]_-{1} & & D \ar[uu]|-{1} \ar[ll]|-{1} && D \ar[uu]|-{e_R} \ar@<1ex>[rr]^-{1} \ar@<-1ex>[rr]_-{1} && D \ar[uu]|-{e_R} \ar[ll]|-{1}
}
}
\caption{The span induced by a pair of groupoids}\label{Fig:induced span of double groupoids}
\end{figure}
in order to compute its pushout, we use the fact that the reflector preserves colimits. This means that we see it as a diagram in $\RG^2(\A)$ and compute its pushout there, via the pushout in $\A$ depicted in Figure~\ref{Fig:pushout in RG^2(A)}. 
\begin{figure}
$\xymatrix@!0@=3em{
Q \pullbackcorner \ar@<1ex>[dd]^-{d_L} \ar@<-1ex>[dd]_-{c_L} \ar@<1ex>[rr]^-{d_U} \ar@<-1ex>[rr]_-{c_U} && M\rtimes L \ar[ll]|-{e_U} \ar@<1ex>[dd]^-{d_M} \ar@<-1ex>[dd]_-{c_M}\\
\\
N\rtimes L \ar[uu]|-{e_L} \ar@<1ex>[rr]^-{d_N} \ar@<-1ex>[rr]_-{c_N} && L \ar[uu]|-{e_M} \ar[ll]|-{e_N}
}$
\caption{Pushout in $\RG(\A)$ computed in $\A$}\label{Fig:pushout in RG^2(A)}
\end{figure}
In other words, $Q\coloneq(M\rtimes L)+_L(N\rtimes L)$ is the pushout of $e_M$ along $e_N$, and the maps $d_U$, $c_U$, $d_L$ and $c_L$ are defined via the universal property of the pushout:
\begin{align*}
d_U\coloneq\pushout{1_{M\rtimes L}}{e_M\comp d_N} \qquad d_L\coloneq \pushout{e_N\comp d_M}{1_{N\rtimes L}}\qquad
c_U\coloneq\pushout{1_{M\rtimes L}}{e_M\comp c_N} \qquad c_L\coloneq \pushout{e_N\comp c_M}{1_{N\rtimes L}}.
\end{align*}
Applying the left adjoint to Figure~\ref{Fig:pushout in RG^2(A)} we obtain the desired double groupoid in Figure~\ref{Fig:CrSqWithTensor}, indeed the pushout in $\Grpd^2(\A)$ of the span in Figure~\ref{Fig:induced span of double groupoids}. Note that $Q_{M\otimes N}$ is given by
\begin{equation}\label{Q}
\frac{(M\rtimes L)+_L(N\rtimes L)}{[K_{d_L},K_{c_L}]\vee[K_{d_U},K_{c_U}]}.	
\end{equation}
By normalising this double groupoid (that is, by computing the kernels of the \lq\lq domain\rq\rq\ morphisms and of the induced maps), we go back from $\Grpd^2(\A)$ to $\XSqr(\A)$ obtaining the internal crossed square in Figure~\ref{Fig:CrSqWithTensor}. 
\begin{figure}

\resizebox{.35\textwidth}{!}
{\xymatrix@!0@=6em{
M\otimes N \pullbackcorner \ar@{{ |>}->}[r] \ar@{{ |>}->}[d] & K_{\overline{d_L}} \ar@<1ex>[r] \ar@<-1ex>[r] \ar@{{ |>}->}[d] & M \ar[l] \ar@{{ |>}->}[d]\\
K_{\overline{d_U}} \ar@<1ex>[d] \ar@<-1ex>[d] \ar@{{ |>}->}[r] & Q_{M\otimes N} \ar@<1ex>[d]^-{\overline{d_L}} \ar@<-1ex>[d]_-{\overline{c_L}} \ar@<1ex>[r]^-{\overline{d_U}} \ar@<-1ex>[r]_-{\overline{c_U}} & M\rtimes L \ar[l]|-{\overline{e_U}} \ar@<1ex>[d]^-{d_M} \ar@<-1ex>[d]_-{c_M}\\
N \ar[u] \ar@{{ |>}->}[r] & N\rtimes L \ar[u]|-{\overline{e_L}} \ar@<1ex>[r]^-{d_N} \ar@<-1ex>[r]_-{c_N} & L \ar[u]|-{e_M} \ar[l]|-{e_N}
}}
\caption{Crossed square involving tensor}\label{Fig:CrSqWithTensor}
\end{figure}
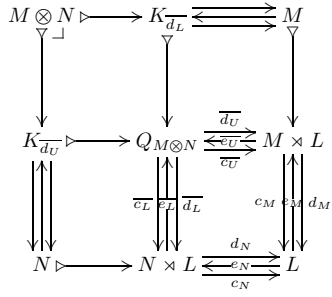
Using the equivalence $\XSqr(\A)\simeq\Grpd^2(\A)$ we now have that this crossed square is the pushout in $\XSqr(\A)$ depicted in Figure~\ref{Fig:pushout of crossed square in Prop 2.15}.
\end{construction}

\begin{definition}
\label{defi:abstract non-abelian tensor product}
Given internal $L$-crossed modules $(M\xrightarrow{\mu}L,\xi_M)$ and $({N\xrightarrow{\nu}L},\xi_N)$ we define their \emph{non-abelian tensor product} $M\otimes N$ as the top left object in the square
\[
\xymatrix{
M\otimes N \ar[r]^-{\pi^{M\otimes N}_M} \ar[d]_-{\pi^{M\otimes N}_N} \ar[rd]|-{\lambda} & M \ar[d]^-{\mu}\\
N \ar[r]_-{\nu} & L
}
\]
constructed above.
\end{definition}

\begin{proposition}
The non-abelian tensor product $M\otimes N$ has an internal $L$-crossed module structure, namely $(M\otimes N\xrightarrow{\lambda}L,\xi)$, where the action $\xi$ is defined as in~\ref{rmk:definitions of diagonal action}.
\end{proposition}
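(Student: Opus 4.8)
The plan is to recognise this proposition as a direct instance of Proposition~\ref{prop:the diagonal is a crossed module}. By Construction~\ref{Construction Tensor}, the crossed square of Definition~\ref{defi:abstract non-abelian tensor product} arises as the normalisation of the internal double groupoid displayed in Figure~\ref{Fig:CrSqWithTensor}. First I would match this double groupoid against the generic one of Figure~\ref{Figure:Normalisation} underlying Subsection~\ref{rmk:definitions of diagonal action}, reading off the dictionary $A=Q_{M\otimes N}$, $B=M\rtimes L$, $C=N\rtimes L$, $D=L$ and $P=M\otimes N$. The crucial observation is that here the lower-right object is precisely $L$, so that $D=L$.

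With this identification in place, Proposition~\ref{prop:the diagonal is a crossed module} immediately yields that the diagonal $(P\xrightarrow{\lambda}D,\xi)=(M\otimes N\xrightarrow{\lambda}L,\xi)$ is an internal crossed module, where $\xi$ is the diagonal action of $D=L$ on $P=M\otimes N$ constructed in Subsection~\ref{rmk:definitions of diagonal action} (and shown to be independent of the chosen description by Lemma~\ref{lemma:those actions are the same}). Since the base object $D$ coincides with $L$, this internal crossed module is exactly an internal $L$-crossed module, and its action is the one named in the statement. I would also record that the diagonal map $\lambda=c\comp j$ agrees with $\mu\comp\pi^{M\otimes N}_M=\nu\comp\pi^{M\otimes N}_N$, so that $\lambda$ is indeed the diagonal already drawn in Definition~\ref{defi:abstract non-abelian tensor product}.

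The only content beyond invoking the earlier proposition is this bookkeeping: checking that the generic labels of Figure~\ref{Figure:Normalisation} instantiate correctly to the tensor situation, and in particular that the two descriptions of $\lambda$ coincide. This amounts to comparing the diagonal structure maps $d=d_N\comp\overline{d_L}=d_M\comp\overline{d_U}$ and $c=c_N\comp\overline{c_L}=c_M\comp\overline{c_U}$ of Figure~\ref{Fig:CrSqWithTensor} with the definitions of $\pi^{M\otimes N}_M$ and $\pi^{M\otimes N}_N$. I expect no genuine obstacle here, since the crossed-module condition for the diagonal, together with the well-definedness of the action $\xi$, has already been fully established in Proposition~\ref{prop:the diagonal is a crossed module} and Lemma~\ref{lemma:those actions are the same}; the work is entirely in confirming that the present setup is a special case of that one.
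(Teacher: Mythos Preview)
Your proposal is correct and takes essentially the same approach as the paper, which simply states that the result follows immediately from Proposition~\ref{prop:the diagonal is a crossed module}. The bookkeeping you outline (matching Figure~\ref{Fig:CrSqWithTensor} against Figure~\ref{Figure:Normalisation} and identifying $D=L$, $P=M\otimes N$) is exactly the implicit content of that one-line proof, and your verification that $\lambda$ agrees with the diagonal of Definition~\ref{defi:abstract non-abelian tensor product} is a helpful elaboration the paper leaves to the reader.
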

\begin{proof}
This follows immediately from Proposition~\ref{prop:the diagonal is a crossed module}.
\end{proof}

\begin{proposition}\label{Proto Tensor Functor}
Consider two $L$-crossed modules $(M\xrightarrow{\mu}L,\xi^L_M)$, $(N\xrightarrow{\nu}L,\xi^L_N)$, two $L'$-crossed modules $(M'\xrightarrow{\mu'}L',\xi'^{L'}_{M'})$, $(N'\xrightarrow{\nu'}L',\xi'^{L'}_{N'})$ and two morphisms of internal crossed modules
\begin{align*}
(M\xrightarrow{\mu}L,\xi^L_M)\xlongrightarrow{(f,l)} (M'\xrightarrow{\mu'}L',\xi'^{L'}_{M'}),\qquad
(N\xrightarrow{\nu}L,\xi^L_N)\xlongrightarrow{(g,l)} (N'\xrightarrow{\nu'}L',\xi'^{L'}_{N'}).
\end{align*}
Then there exists a unique morphism $f\otimes g\colon M\otimes N\to M'\otimes N'$ such that $\morph{f\otimes g & f}{g & l}$ is a morphism of internal crossed squares.
\end{proposition}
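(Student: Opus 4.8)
The plan is to exploit the description of the tensor product crossed square as a pushout, established in Construction~\ref{Construction Tensor}. Recall from there that the crossed square with corner $M\otimes N$ is the pushout in $\XSqr(\A)$ of the span of degenerate crossed squares displayed in Figure~\ref{Fig:pushout of crossed square in Prop 2.15}: the apex $X_0$ has only $L$ nonzero, the two feet $X_M$ and $X_N$ carry the crossed modules $\mu\colon M\to L$ and $\nu\colon N\to L$, and the legs are $\morph{1_0 & 0}{1_0 & 1_L}$ and $\morph{1_0 & 1_0}{0 & 1_L}$. Once this is in place, the functoriality asserted in the statement becomes a formal consequence of the universal property of pushouts.

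First I would check that the given data determine a morphism from the span for $(M,N,L)$ to the span for $(M',N',L')$. Since $(f,l)$ and $(g,l)$ are morphisms of internal crossed modules, they induce morphisms $X_M\to X_M'$ and $X_N\to X_N'$ between the feet (acting as $f$, respectively $g$, on the nonzero off-diagonal corner and as $l$ on $L$), together with the evident morphism $X_0\to X_0'$ acting as $l$ on $L$; this is just the image of $(f,l)$ and $(g,l)$ under the functor embedding crossed modules as degenerate crossed squares. The only point left to verify is that these three morphisms commute with the legs of the two spans, which is immediate because the legs consist solely of identities, zero maps and copies of $l$.

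Next I would apply the pushout functor to this morphism of spans. This produces a unique morphism $\morph{f\otimes g & f}{g & l}$ between the two pushout crossed squares, whose $M$-, $N$- and $L$-components are inherited from the feet and the apex, hence equal $f$, $g$ and $l$, while its remaining top-left component is by definition the sought map $f\otimes g\colon M\otimes N\to M'\otimes N'$. Uniqueness of $f\otimes g$ subject to $\morph{f\otimes g & f}{g & l}$ being a morphism of crossed squares then coincides with the uniqueness clause of the pushout universal property: precomposing any such morphism with the two legs of the source span yields the cocone already determined by $f$, $g$ and $l$, so the induced map out of the pushout---and in particular its top-left component---is forced.

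The step I expect to be the main obstacle is the bookkeeping needed to confirm that the abstractly produced morphism of pushouts really does carry $f$, $g$ and $l$ as its three boundary components. To settle this I would trace the construction through the equivalence $\XSqr(\A)\simeq\Grpd^2(\A)$ and the reflector $\RG^2(\A)\to\Grpd^2(\A)$ of Proposition~\ref{Proposition Induced Double Groupoid}. Under $\XMod(\A)\simeq\Grpd(\A)$ the pairs $(f,l)$ and $(g,l)$ correspond to morphisms of the internal groupoids on $L$ with objects of arrows $M\rtimes L$ and $N\rtimes L$; these agree on $L$ via $l$ and hence glue to a morphism out of the pushout $Q=(M\rtimes L)+_L(N\rtimes L)$ of Figure~\ref{Fig:pushout in RG^2(A)}. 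Reflecting this morphism of double reflexive graphs into $\Grpd^2(\A)$ and normalising it, as in Figure~\ref{Fig:CrSqWithTensor}, returns precisely the boundary $f$, $g$, $l$; all compatibilities with the internal actions are then automatic, being transported by the equivalence.
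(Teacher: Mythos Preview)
Your argument is correct and follows essentially the same route as the paper: both use the universal property of the tensor-product crossed square as a pushout (the paper works directly at the level of the double groupoids $Q_{M\otimes N}$ and $Q_{M'\otimes N'}$, obtaining a morphism $\phi$ of double groupoids and then normalising to recover $f\otimes g$, which is exactly what your final paragraph does). Your first two paragraphs phrase this a bit more abstractly via the pushout functor applied to a morphism of spans, but this is only a cosmetic difference.
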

\begin{proof}
Consider Figure~\ref{Fig:Functoriality of the tensor product}, 
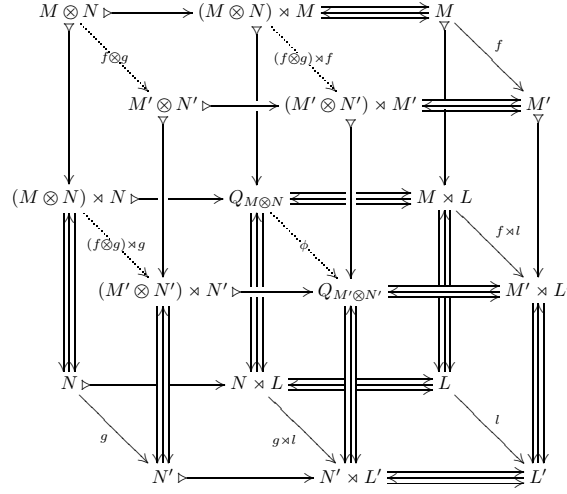
\begin{figure}

\resizebox{.6\textwidth}{!}{
\xymatrix@!0@=5em{
M\otimes N 
\ar@{{ |>}->}[rr]
\ar@{{ |>}->}[dd]
\ar@{.>}[rd]|-{f\otimes g}
&&
(M\otimes N)\rtimes M
\ar@<3pt>[rr]
\ar@<-3pt>[rr]
\ar@{{ |>}->}[dd]|-{\hole}
\ar@{.>}[rd]|-{(f\otimes g)\rtimes f}
&&
M
\ar[ll]
\ar@{{ |>}->}[dd]|-{\hole}
\ar[rd]^-{f}
\\
&
M'\otimes N'
\ar@{{ |>}->}[rr]
\ar@{{ |>}->}[dd]
&&
(M'\otimes N')\rtimes M'
\ar@<3pt>[rr]
\ar@<-3pt>[rr]
\ar@{{ |>}->}[dd]
&&
M'
\ar[ll]
\ar@{{ |>}->}[dd]
\\
(M\otimes N)\rtimes N
\ar@<3pt>[dd]
\ar@<-3pt>[dd]
\ar@{{ |>}->}[rr]|(.50){\hole}
\ar@{.>}[rd]|-{(f\otimes g)\rtimes g}
&&
Q_{M\otimes N}
\ar@<3pt>[dd]|(.50){\hole}
\ar@<-3pt>[dd]|(.50){\hole}
\ar@<3pt>[rr]|(.50){\hole}
\ar@<-3pt>[rr]|(.50){\hole}
\ar@{.>}[rd]|-{\phi}
&&
M\rtimes L
\ar@<3pt>[dd]|(.50){\hole}
\ar@<-3pt>[dd]|(.50){\hole}
\ar[ll]|(.50){\hole}
\ar[rd]^-{f\rtimes l}
\\
&
(M'\otimes N')\rtimes N'
\ar@<3pt>[dd]
\ar@<-3pt>[dd]
\ar@{{ |>}->}[rr]
&&
Q_{M'\otimes N'}
\ar@<3pt>[dd]
\ar@<-3pt>[dd]
\ar@<3pt>[rr]
\ar@<-3pt>[rr]
&&
M'\rtimes L'
\ar@<3pt>[dd]
\ar@<-3pt>[dd]
\ar[ll]
\\
N
\ar[uu]
\ar@{{ |>}->}[rr]|(.50){\hole}
\ar[rd]_-{g}
&&
N\rtimes L
\ar[uu]|(.50){\hole}
\ar[rd]_-{g\rtimes l}
\ar@<3pt>[rr]|(.50){\hole}
\ar@<-3pt>[rr]|(.50){\hole}
&&
L
\ar[uu]|(.50){\hole}
\ar[ll]|(.50){\hole}
\ar[rd]^-{l}
\\
&
N'
\ar[uu]
\ar@{{ |>}->}[rr]
&&
N'\rtimes L'
\ar[uu]
\ar@<3pt>[rr]
\ar@<-3pt>[rr]
&&
L'
\ar[uu]
\ar[ll]
}}
\caption{Functoriality of the tensor product}\label{Fig:Functoriality of the tensor product}
\end{figure}
where $\phi$ is determined by the universal property of the diagram in Figure~\ref{Fig:CrSqWithTensor}: in particular $\phi$ is the only morphism which makes $\morph{\phi & f\rtimes l}{g\rtimes l & l }$ a morphism of double groupoids. Since the other dotted maps are uniquely induced by taking kernels, $f\otimes g$ is automatically the unique morphism such that $\morph{f\otimes g &f}{g& l}$ is a morphism of internal crossed squares.
\end{proof}

\begin{corollary}
\label{cor:the non-abelian tensor product is a bifunctor}
In the situation of Proposition~\ref{Proto Tensor Functor},
\[
(M\otimes N\xrightarrow{\lambda}L,\xi)\xrightarrow{(f\otimes g,l)}(M'\otimes N'\xrightarrow{\lambda'}L',\xi')
\]
is a morphism of internal crossed modules. Hence the non-abelian tensor product is a bifunctor $\otimes\colon \XMod_L(\A)\times \XMod_L(\A)\to \XMod_L(\A)$.
\end{corollary}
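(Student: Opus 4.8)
The plan is to deduce both assertions formally from results already in hand, with essentially no new computation. The heart of the matter is that the map $f\otimes g$ of Proposition~\ref{Proto Tensor Functor} is, by construction, the upper-left component of the normalisation of a morphism of internal double groupoids, so the crossed-module statement is exactly what Proposition~\ref{prop:every morphism of internal crossed squares has an underlying morphism of internal crossed modules between the diagonals} delivers when applied to that morphism.

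First I would identify the diagonal internal crossed module of the tensor crossed square. By Definition~\ref{defi:abstract non-abelian tensor product}, together with the diagonal construction of Subsection~\ref{rmk:definitions of diagonal action}, the crossed square of Figure~\ref{Fig:CrSqWithTensor}---whose upper-left corner is $Q_{M\otimes N}$ and whose lower-right corner is $L$---has as its diagonal internal crossed module precisely $(M\otimes N\xrightarrow{\lambda}L,\xi)$, where $M\otimes N$ is the kernel of the diagonal domain map and $\lambda$, $\xi$ are the data described there. The same applies to the primed data, yielding $(M'\otimes N'\xrightarrow{\lambda'}L',\xi')$.

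Next I would recall that Proposition~\ref{Proto Tensor Functor} furnishes a genuine morphism of internal double groupoids, namely $\morph{\phi & f\rtimes l}{g\rtimes l & l}$ of Figure~\ref{Fig:Functoriality of the tensor product}, whose normalisation is the morphism of internal crossed squares $\morph{f\otimes g & f}{g & l}$. Applying Proposition~\ref{prop:every morphism of internal crossed squares has an underlying morphism of internal crossed modules between the diagonals} to this morphism of double groupoids, the induced morphism between the two diagonal crossed modules has upper-left component $\rho$ equal to the upper-left component $f\otimes g$ of the normalised crossed-square morphism, and lower-right component $\delta=l$. That proposition asserts precisely that such $(\rho,\delta)$ commutes with $\lambda$, $\lambda'$ and preserves the diagonal actions $\xi$, $\xi'$; hence $(f\otimes g,l)$ is a morphism of internal crossed modules, settling the first claim. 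For the bifunctoriality over a fixed base $L$ (so $l=1_L$ and $L=L'$), I would invoke the uniqueness clause of Proposition~\ref{Proto Tensor Functor}: the identity morphism of the crossed square of Figure~\ref{Fig:CrSqWithTensor} has upper-left component $1_{M\otimes N}$ and satisfies the defining property, forcing $1_M\otimes 1_N=1_{M\otimes N}$; likewise, for composable pairs the composite of the two crossed-square morphisms is again a crossed-square morphism whose upper-left component is the composite of the $f\otimes g$'s, so uniqueness gives $(f'\comp f)\otimes(g'\comp g)=(f'\otimes g')\comp(f\otimes g)$. Combined with the first claim, this exhibits $\otimes$ as a bifunctor $\XMod_L(\A)\times\XMod_L(\A)\to\XMod_L(\A)$.

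The only point requiring care---and the nearest thing to an obstacle in an otherwise formal argument---is the bookkeeping that the component $\rho$ produced abstractly by Proposition~\ref{prop:every morphism of internal crossed squares has an underlying morphism of internal crossed modules between the diagonals} genuinely coincides with the map $f\otimes g$ defined through the universal property in Proposition~\ref{Proto Tensor Functor}. Since both are the upper-left component of the normalisation of one and the same morphism of double groupoids, they agree; once this identification is made explicit, every remaining verification is purely diagrammatic.
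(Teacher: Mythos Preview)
Your proof is correct and follows the same approach as the paper: the first claim is obtained by applying Proposition~\ref{prop:every morphism of internal crossed squares has an underlying morphism of internal crossed modules between the diagonals} to the morphism of double groupoids constructed in Proposition~\ref{Proto Tensor Functor}, and the bifunctor statement is the case $l=1_L$. Your treatment is in fact more explicit than the paper's, which omits the verification of the identity and composition axioms that you derive from the uniqueness clause of Proposition~\ref{Proto Tensor Functor}.
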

\begin{proof}
The first result applies Proposition~\ref{prop:every morphism of internal crossed squares has an underlying morphism of internal crossed modules between the diagonals} to the morphism $\morph{f\otimes g& f}{g& l}$. The second part is the particular case where $l=1_L$.
\end{proof}

The tensor product operation is obviously commutative, up to isomorphism, by construction; but it is not associative---see~\cite{Ell91} for an argument in the case of Lie algebras; see also Section~\ref{Sec:Examples}.

\begin{example}
\label{ex:tensor product with trivial crossed module}
Consider the two crossed modules $(N\xrightarrow{\nu}L,\xi^L_N)$ and $(0\xrightarrow{0}L,\tau^L_0)$. Let us compute their non-abelian tensor product. The induced internal groupoids are given in the diagram
\[
\xymatrixcolsep{3pc}
\xymatrixrowsep{3pc}
\xymatrix{
N \ar@{{ |>}->}[r]^-{k_N} & N\rtimes L \ar@<1ex>[r]^-{d_N} \ar@<-1ex>[r]_-{c_N} & L \ar[r]|-{1_L} \ar[l]|-{e_N} & L \ar@<1ex>[l]^-{1_L} \ar@<-1ex>[l]_-{1_L} & 0 \ar@{{ |>}->}[l]_-{0}
}
\]
The double groupoid in Figure~\ref{Fig:pushout in RG^2(A)} has $M\rtimes L=L$, which is easily seen to imply $0\otimes N\cong 0$.
\end{example}

The tensor product may be viewed (or defined) as an initial object in a certain category of crossed squares.

\begin{proposition}
\label{prop:universal property of non-abelian tensor product}
Consider an internal crossed square as on the left
\[
\vcenter{\xymatrix{
P \ar[r]^-{p_M} \ar[d]_-{p_N} & M \ar[d]^-{\mu}\\
N \ar[r]_-{\nu} & L
}}
\qquad\qquad\qquad
\vcenter{\xymatrix{
M\otimes N \ar@/^1pc/[rrd]^-{\pi^{M\otimes N}_M} \ar@/_1pc/[rdd]_-{\pi^{M\otimes N}_N} \ar@{.>}[rd]|-{\phi}\\
& P \ar[r] ^-{p_M}\ar[d]_-{p_N} & M \ar[d]^-{\mu}\\
& N \ar[r]_-{\nu} & L
}}
\]
Then there exists a unique $\phi$ such that the diagram on the right commutes, making $\morph{\phi & 1_M}{1_N& 1_L}$ a morphism of internal crossed squares.
\end{proposition}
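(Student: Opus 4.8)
The plan is to read this off as the universal property of the pushout that \emph{defines} $M\otimes N$. By Construction~\ref{Construction Tensor}, through the equivalence $\XSqr(\A)\simeq\Grpd^2(\A)$, the crossed square of Figure~\ref{Fig:CrSqWithTensor} with top-left corner $M\otimes N$ is precisely the pushout in $\XSqr(\A)$ of the span formed by the top-left, top-right and bottom-left crossed squares of Figure~\ref{Fig:pushout of crossed square in Prop 2.15}, with coprojections $\morph{0 & 1_M}{0 & 1_L}$ (out of the square whose only nonzero corners are $M$ and $L$, joined by $\mu$) and $\morph{0 & 0}{1_N & 1_L}$ (out of the square whose only nonzero corners are $N$ and $L$, joined by $\nu$). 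Thus the result will follow once I exhibit, for the given crossed square $P$ over $\mu$ and $\nu$, a cocone on this span with vertex $P$, and then match the components of the induced comparison morphism.

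First I would construct the two legs of the cocone. Define $a\colon (\text{top-right square})\to P$ to be $\morph{0 & 1_M}{0 & 1_L}$ and $b\colon (\text{bottom-left square})\to P$ to be $\morph{0 & 0}{1_N & 1_L}$. Because the domains of $a$ and $b$ have zero objects in three of their four corners, checking that $a$ and $b$ are morphisms of internal crossed squares reduces to the compatibility of $1_M$ and $1_L$ with $\mu$, and of $1_N$ and $1_L$ with $\nu$, which is immediate; all action-compatibility conditions hold trivially, since every internal action on a zero object is trivial. Moreover $a$ and $b$ restrict on the common apex of the span (the crossed square whose only nonzero corner is $L$) to the same morphism, namely $1_L$ on $L$ and zero elsewhere, so $(a,b)$ is a genuine cocone.

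Next I would invoke the universal property of the pushout to obtain a unique morphism of internal crossed squares from the tensor square of Figure~\ref{Fig:CrSqWithTensor} to $P$. Precomposing this comparison morphism with the coprojection $\morph{0 & 1_M}{0 & 1_L}$ recovers $a$; comparing the $M$- and $L$-components forces the comparison morphism to be $1_M$ on $M$ and $1_L$ on $L$. Likewise, precomposition with $\morph{0 & 0}{1_N & 1_L}$ and comparison with $b$ forces it to be $1_N$ on $N$. Hence the comparison morphism has the shape $\morph{\phi & 1_M}{1_N & 1_L}$ with $\phi\colon M\otimes N\to P$ its top-left component; commutativity of the triangle on the right, that is $p_M\comp\phi=\pi^{M\otimes N}_M$ and $p_N\comp\phi=\pi^{M\otimes N}_N$, is then automatic from $\morph{\phi & 1_M}{1_N & 1_L}$ being a morphism of crossed squares.

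For uniqueness, conversely any morphism of internal crossed squares of the form $\morph{\phi & 1_M}{1_N & 1_L}$ from the tensor square to $P$ yields, upon precomposition with the two coprojections, exactly the cocone $(a,b)$; by the uniqueness clause in the pushout property it must therefore coincide with the comparison morphism just constructed, whence $\phi$ is unique. I expect the only point requiring genuine care to be the identification, already supplied by Construction~\ref{Construction Tensor}, of the tensor square with the stated pushout, together with the verification that $a$ and $b$ are morphisms in $\XMod(\XMod(\A))$: the notion of morphism of internal crossed squares is opaque in general, but it is trivialised here by the degeneracy of the sources.
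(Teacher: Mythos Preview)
Your argument is correct and is essentially the same as the paper's: both use that the tensor crossed square is the pushout of the span in Figure~\ref{Fig:pushout of crossed square in Prop 2.15} and read off the universal property by exhibiting a cocone with vertex the given square $P$. The only difference is one of framing: the paper passes through the equivalence $\XSqr(\A)\simeq\Grpd^2(\A)$ and works with the concrete pushout $Q_{M\otimes N}$ of double groupoids (obtaining $\phi_0$ there, then normalising to $\phi$), whereas you stay in $\XSqr(\A)$ and invoke the transported pushout directly; this lets the paper sidestep the verification that your $a$ and $b$ are morphisms in $\XMod(\XMod(\A))$, since at the double-groupoid level the corresponding maps are visibly morphisms of diagrams, while you must argue (as you do) that degeneracy of the sources makes this trivial.
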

\begin{proof}
We first shift to the internal double groupoid setting and construct the diagram in Figure~\ref{Fig:universal property of non-abelian tensor product}. 
\begin{figure}

\resizebox{.5\textwidth}{!}{
\xymatrix@!0@=4.5em{
M\otimes N 
\ar@{{ |>}->}[rr]
\ar@{{ |>}->}[dd]
\ar@{.>}[rd]|-{\phi}
&&
(M\otimes N)\rtimes M
\ar@<3pt>[rr]
\ar@<-3pt>[rr]
\ar@{{ |>}->}[dd]|-{\hole}
\ar@{.>}[rd]|-{\phi\rtimes 1_M}
&&
M
\ar[ll]
\ar@{{ |>}->}[dd]|-{\hole}
\ar@{=}[rd]
\\
&
P
\ar@{{ |>}->}[rr]
\ar@{{ |>}->}[dd]
&&
P\rtimes M
\ar@<3pt>[rr]
\ar@<-3pt>[rr]
\ar@{{ |>}->}[dd]
&&
M
\ar[ll]
\ar@{{ |>}->}[dd]
\\
(M\otimes N)\rtimes N
\ar@<3pt>[dd]
\ar@<-3pt>[dd]
\ar@{{ |>}->}[rr]|(.50){\hole}
\ar@{.>}[rd]|-{\phi\rtimes 1_N}
&&
Q_{M\otimes N}
\ar@<3pt>[dd]|(.50){\hole}
\ar@<-3pt>[dd]|(.50){\hole}
\ar@<3pt>[rr]|(.50){\hole}
\ar@<-3pt>[rr]|(.50){\hole}
\ar@{.>}[rd]|-{\phi_0}
&&
M\rtimes L
\ar@<3pt>[dd]|(.50){\hole}
\ar@<-3pt>[dd]|(.50){\hole}
\ar[ll]|(.50){\hole}
\ar@{=}[rd]
\\
&
P\rtimes N
\ar@<3pt>[dd]
\ar@<-3pt>[dd]
\ar@{{ |>}->}[rr]
&&
P_0
\ar@<3pt>[dd]
\ar@<-3pt>[dd]
\ar@<3pt>[rr]
\ar@<-3pt>[rr]
&&
M\rtimes L
\ar@<3pt>[dd]
\ar@<-3pt>[dd]
\ar[ll]
\\
N
\ar[uu]
\ar@{{ |>}->}[rr]|(.50){\hole}
\ar@{=}[rd]
&&
N\rtimes L
\ar[uu]|(.50){\hole}
\ar@{=}[rd]
\ar@<3pt>[rr]|(.50){\hole}
\ar@<-3pt>[rr]|(.50){\hole}
&&
L
\ar[uu]|(.50){\hole}
\ar[ll]|(.50){\hole}
\ar@{=}[rd]
\\
&
N
\ar[uu]
\ar@{{ |>}->}[rr]
&&
N\rtimes L
\ar[uu]
\ar@<3pt>[rr]
\ar@<-3pt>[rr]
&&
L
\ar[uu]
\ar[ll]
}}

\caption{The universal property of the non-abelian tensor product}\label{Fig:universal property of non-abelian tensor product}
\end{figure}
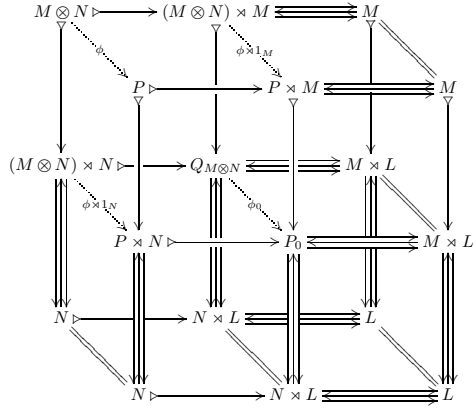
Here $\phi_0$ is induced by the fact that the double groupoid involving $Q_{M\otimes N}$ is defined as a pushout in $\Grpd^2(\A)$, whereas the maps $\phi\rtimes 1_M$ and $\phi\rtimes 1_N$ are the maps induced between the kernels and finally $\phi$ is given by the front square in the upper-left cube being a pullback. The fact that $\phi$ is the unique morphism making $\morph{\phi & 1_M}{1_N & 1_L}$ a~morphism of crossed squares comes from the fact that $\phi_0$ is the unique morphism such that $\morph{\phi_0 & 1_{M\rtimes L}}{1_{N\rtimes L} & 1_L}$ is a morphism of double groupoids.
\end{proof}

\section{Examples of the tensor product}\label{Sec:Examples}
In this section we consider three different types of examples of the tensor product: first we look at the case of two normal subobjects, viewed as crossed modules \eqref{SubsecNormalSubobjects}; then we explore the other end of the spectrum: pairs of abelian objects acting trivially upon one another \eqref{SubSecAbelian}; finally in \eqref{SubsecLie} we treat the non-abelian tensor product of Lie algebras.
 
We shall work in the context of an algebraically coherent~\cite{CGVdL15b} semi-abelian category $\A$. This means that the natural comparison morphism $\binom{1_X\flat \iota_Y}{1_X\flat \iota_Z}\colon X\flat Y+ X\flat Z \to X\flat (Y+Z)$ is a regular epimorphism, for each choice of $X$, $Y$, $Z\in \A$---recall the definition of $\flat$ given in~\eqref{Def Flat}. All \emph{locally algebraically cartesian closed} semi-abelian categories~\cite{Gray2012} are examples, since then by definition, the comparison morphisms $\binom{1_X\flat \iota_Y}{1_X\flat \iota_Z}$ are isomorphisms. We find groups, Lie algebras, crossed modules, cocommutative Hopf algebras. Next, all \emph{Orzech categories of interest}~\cite{Orz72} are algebraically coherent. 

All algebraically coherent semi-abelian categories satisfy \SH. More precisely, when\-ever $M$, $N\lhd L$ in an algebraically coherent semi-abelian category, the so-called \emph{Three Subobjects Lemma} $[M,N,L]=[M,[N,L]]\vee [N,[M,L]]$ implies that  $[M,N,L]\leq [M,N]$. Further convenient properties of algebraically coherent categories will be recalled below.

\subsection{The Smith-Pedicchio commutator}
We start with something which is not quite an example, but very close to being one. Pedicchio's categorical approach to the Smith commutator of equivalence relations (see~\cite{Ped95b,Smi76,BB04}) involves a double equivalence relation as in Figure~\ref{Figure Delta}. 
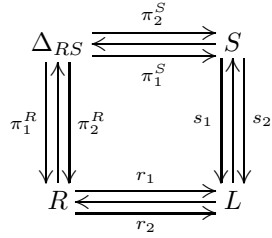
\begin{figure}
\[
\vcenter{\xymatrix@=2em{\Delta_{RS} \ar@<1ex>[rr]^-{\pi_2^S} \ar@<-1ex>[rr]_-{\pi_1^S}\ar@<1ex>[dd]^-{\pi_2^R}\ar@<-1ex>[dd]_-{\pi_1^R}&&S\ar[ll]\ar@<1ex>[dd]^-{s_2}\ar@<-1ex>[dd]_-{s_1}\\\\
R \ar[uu]\ar@<1ex>[rr]^-{r_1}\ar@<-1ex>[rr]_-{r_2}&& L\ar[ll]\ar[uu]}}
\]
\caption{The universal double equivalence relation $\Delta_{RS}$ over $R$ and $S$}\label{Figure Delta}
\end{figure}
Given equivalence relations $R$ and $S$ on an object $
L$, the double equivalence relation $\Delta_{RS}$ is initial amongst all double equivalence relations over $R$ and~$S$. (In the words of~\cite{Ped95b}, it is the smallest such.) Thus, it satisfies part, but only part, of the universal property depicted in Figure~\ref{Fig:universal property of non-abelian tensor product}: it is initial among equivalence relations rather than initial among double groupoids. 

As explained to us by Cyrille Simeu, it is not hard to see that the corresponding crossed square (where $M$ and $N\lhd L$ are the respective normalisations of $R$ and~$S$ and all arrows are normal monomorphisms) has an upper left corner which is precisely the so-called \emph{Ursini commutator} $[M,N]^{\mathcal{U}}_L$ of $M$ and $N$ in the sense of \cite{Mantovani:Ursini}. By the results of \cite{HVdL11}, in the present, algebraically coherent context, this commutator coincides with~$[M,N]$. So 
\[
\xymatrix{[M,N]  \ar@{{ |>}->}[r]  \ar@{{ |>}->}[d] & N \ar@{{ |>}->}[d]^-n\\
M \ar@{{ |>}->}[r]_-m & L}
\]
is the initial crossed square \emph{of normal monomorphisms} over $m$ and $n$.

\subsection{Normal subobjects.}\label{SubsecNormalSubobjects}
We may now ask ourselves what is the tensor product of two normal subobjects $M$, $N\lhd L$ in $\A$. From the above it is not hard to deduce that this tensor product must have $[M,N]$ as a quotient. Let us give an alternative and slightly more explicit explanation for this fact.

First of all, we know from Example~\ref{Example intersection} that the intersection of $M$ and $N$ is part of a crossed square, so we have a canonical map $h\colon M\otimes N\to M\wedge N$. Now in the computation of the tensor product (the construction in~\ref{Construction Tensor}, with in particular equation~\eqref{Q}), we have to take (a quotient of) a certain pushout in~$\A$, which happens to be the sum over $L$ of the respective denormalisations $R$ and $S$ of~$M$ and~$N$; the intersection of the kernels $K_{\overline{d_L}}$ and $K_{\overline{d_U}}$ of the first projections $\overline{d_L}$ and $\overline{d_U}$ in this double reflexive graph (also called the \emph{direction} of the square of first projections $d_N\comp \overline{d_L}=d_M\comp \overline{d_U}$) is the underlying object of the cosmash product $R\diamond_L S$ of $R$ and $S$ over~$L$, which (by Lemma~2.9 in~\cite{SVdL3}) in an algebraically coherent setting is nothing but the cosmash product $M\diamond N$ of $M$ and $N$. The tensor product $M\otimes N$ is a quotient of this object, and~$h$ composed with the quotient map is the canonical map $M\diamond N\to M\wedge N$. 

Thus we see that the image of $h\colon M\otimes N\to M\wedge N$ is $[M,N]\leq M\wedge N$. In particular, $h$ is far from being a regular epimorphism in general.

\subsection{Abelian objects acting trivially on one another.}\label{SubSecAbelian}
Recall that $\Nil_2(\A)$ is the full subcategory of $\A$ of the objects $A$ where $[A,A,A]$ is trivial~\cite{HVdL11,CGVdL15b}: this is the Birkhoff subcategory whose reflector $\nil_2\colon\A\to \Nil_2(\A)$ sends $A$ to ${A}/{[A,A,A]}$. When $\A$ is algebraically coherent, $[A,A,A]=[[A,A],A]\lhd A$. We are going to prove:

\begin{theorem}\label{Theorem Nil2}
When $\A$ is an algebraically coherent semi-abelian category, for any pair of abelian objects $M$ and $N$ acting trivially on one another we have $M\otimes N\cong M\diamond_2N$, where $M\diamond_2N$ is the cosmash product of $M$ and $N$ in the 2-nilpotent core $\Nil_2(\A)$ of~$\A$.
\end{theorem}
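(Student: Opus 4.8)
The plan is to evaluate the construction of Section~\ref{Section Non-Abelian Tensor} on the simplest realisation of the trivial pair of compatible actions, reduce the statement to a single commutator identity inside the coproduct $M+N$, and settle that identity with the Higgins calculus of Proposition~\ref{Higgins properties}, \SH\ and algebraic coherence.

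First I would realise the trivial pair of compatible actions by the coterminal crossed modules $\mu=0\colon M\to 0$ and $\nu=0\colon N\to 0$; these are genuine crossed modules precisely because $M$ and $N$ are abelian (Example~\ref{ex:crossed point associated to trivial crossed module}), and since the tensor product depends only on the underlying pair of compatible actions we may take $L=0$. With $L=0$ we have $M\rtimes L=M$ and $N\rtimes L=N$, so $Q=M+N$ in Figure~\ref{Fig:pushout in RG^2(A)}; as every structure map factors through $L=0$, the induced maps collapse to $d_U=c_U=\binom{1_M}{0}$ and $d_L=c_L=\binom{0}{1_N}$. Hence $K_{d_U}=K_{c_U}=M\flat N$ and $K_{d_L}=K_{c_L}\cong N\flat M$ are the kernels of the two coproduct projections~\eqref{Def Flat}, whereas the diagonal $\langle d_U,d_L\rangle$ is exactly $\Sigma_{M,N}\colon M+N\to M\times N$ from Definition~\ref{defi:cosmash products}, with kernel $M\diamond N$. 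Writing $X\coloneq M+N$ and $W\coloneq[M\flat N,M\flat N]\vee[N\flat M,N\flat M]$, normalising the reflected double groupoid of Figure~\ref{Fig:CrSqWithTensor} realises $M\otimes N$ as the kernel of the map induced by $\Sigma_{M,N}$ on $Q_{M\otimes N}=X/W$ (see~\eqref{Q}). Since $\Sigma_{M,N}$ sends $M\flat N$ into the abelian object $0\times N$ and $N\flat M$ into $M\times 0$, each generator of $W$ maps to a self-commutator of an abelian object; thus $W\leq M\diamond N$ and $M\otimes N\cong(M\diamond N)/W$.

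Next I would identify the target: as $\nil_2$ is a reflection it preserves coproducts, so the coproduct of $M$ and $N$ in $\Nil_2(\A)$ is $X/[X,X,X]$, and because $M\times N$ is abelian (hence already $2$-nilpotent) with $[X,X,X]\leq M\diamond N$, we get $M\diamond_2N\cong(M\diamond N)/[X,X,X]$. The theorem is therefore equivalent to the equality of normal subobjects of $X$
\[
W=[X,X,X].
\]
For the inclusion $W\leq[X,X,X]$ I would pass to the $2$-nilpotent quotient $X/[X,X,X]$: there the image of $[M,N]_X=M\diamond N$ is central, since $[[M,N],X]\leq[M,N,X]\leq[X,X,X]$ by Proposition~\ref{Higgins properties}, while $M$ and $N$ stay abelian. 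Recalling from the split extension $0\to M\diamond N\to M\flat N\to N\to0$ that $M\flat N=N\vee[M,N]_X$ and $N\flat M=M\vee[M,N]_X$, both images become abelian and their self-commutators vanish in $X/[X,X,X]$, which gives the inclusion.

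The hard part is the reverse inclusion $[X,X,X]\leq W$, which I would deduce from the claim that $X/W$ is $2$-nilpotent. In $X/W$ the normal subobjects $\overline{M\flat N}$ and $\overline{N\flat M}$ are abelian by construction of $W$, and their join is all of $X/W$ because $M\flat N\vee N\flat M=X$. The crux is thus the following lemma: in an algebraically coherent semi-abelian category with \SH, an object $Y$ covered by two abelian normal subobjects $U,V\lhd Y$ (so $U\vee V=Y$ and $[U,U]=[V,V]=0$) is $2$-nilpotent. I expect to prove it purely by commutator bookkeeping: \SH\ forces $[U,U,Y]=[V,V,Y]=0$ and the Three Subobjects Lemma forces $[U,V,Y]\leq[U,V]$, so expanding $[U\vee V,U\vee V]$ via Proposition~\ref{Higgins properties} collapses $[Y,Y]$ to $[U,V]\leq U\cap V$; a second application of the Three Subobjects Lemma writes $[Y,Y,Y]=[Y,[U,V]]$, and then $[U,V]\leq U$, $[U,V]\leq V$ together with $[U,U]=[V,V]=0$ force this to vanish. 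Feeding $Y=X/W$, $U=\overline{M\flat N}$, $V=\overline{N\flat M}$ into the lemma yields $X/W\in\Nil_2(\A)$, whence $[X,X,X]\leq W$ by the universal property of $\nil_2$. Combining the two inclusions gives $W=[X,X,X]$ and hence $M\otimes N\cong M\diamond_2N$; the only genuinely delicate points are this commutator lemma and the verification that $L=0$ may be used, both resting on algebraic coherence and \SH.
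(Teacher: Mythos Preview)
Your proposal is correct and follows the same overall reduction as the paper: take $L=0$, identify $M\otimes N$ with $(M\diamond N)/W$ where $W=[M\flat N,M\flat N]\vee[N\flat M,N\flat M]$, identify $M\diamond_2 N$ with $(M\diamond N)/[X,X,X]$ for $X=M+N$, and then prove $W=[X,X,X]$. The difference lies in how the two inclusions are verified. The paper establishes $[X,X,X]\leq W$ by a direct Higgins-commutator expansion: it first writes $M+N=(M\flat N)\vee M$ and $M\flat N=(N\diamond M)\vee N$, computes $[M+N,M+N]=[M\flat N,M\flat N]\vee[N,M]$, and then expands $[[M+N,M+N],M+N]$ term by term until everything falls into $W$; the reverse inclusion is handled by a similar expansion of $[N\flat M,N\flat M]$. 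You instead isolate a clean auxiliary lemma---an object covered by two abelian normal subobjects is $2$-nilpotent---and apply it to $X/W$ with $U=\overline{M\flat N}$ and $V=\overline{N\flat M}$. Your lemma is sound (the key steps $[U,U,Y]=0$ via \SH, $[U,V,Y]\leq[U,V]$ via the Three Subobjects Lemma, $[U,V]\leq U\cap V$ via normality, and the final vanishing of $[[U,V],U,V]\leq[U,U,V]$ all go through), and it gives a more modular and reusable argument than the paper's brute-force expansion; conversely, the paper's calculation is more self-contained and avoids formulating an extra statement. Both approaches rest on exactly the same ingredients: Proposition~\ref{Higgins properties}, \SH, and algebraic coherence through the Three Subobjects Lemma.
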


Via \cite{MacHenry} this allows us to recover the result from \cite{BL87} that when $M$ and~$N$ are groups, $M\otimes N=M\otimes_{\Z}N$. This also exhibits the \emph{bilinear product} of \cite{HVdL2} as a special case of the non-abelian tensor product.

\begin{proof}[of Theorem~\ref{Theorem Nil2}]
The construction of the tensor product tells us that we should consider the crossed modules $M\to 0$ and $N\to 0$, which correspond to the reflexive graphs $M$ and $N$ on the object $0$. Here we use that $M$ and $N$ are abelian as in Example~\ref{ex:crossed point associated to trivial crossed module}. Hence the tensor product is the normalisation of the internal groupoid obtained from the quotient of the sum $M+N$ by the join of commutators $J=[N\flat M,N\flat M]\vee [M\flat N, M\flat N]$ as in Figure~\ref{Fig:Abelian}. 
\begin{figure}
\resizebox{.3\textwidth}{!}{
\xymatrix@!0@=6em{M\otimes N \ar@{{ |>}->}[r] \ar@{{ |>}->}[d] & \frac{M\flat N}{J} \ar@<-1ex>[r] \ar@<1ex>[r] \ar@{{ |>}->}[d] & N \ar@{=}[d] \ar[l]\\
\frac{N\flat M}{J} \ar@<-1ex>[d] \ar@<1ex>[d] \ar@{{ |>}->}[r] & \frac{M+N}{J} \ar@<-1ex>[d] \ar@<1ex>[d] \ar@<-1ex>[r] \ar@<1ex>[r] & N \ar@<-1ex>[d] \ar@<1ex>[d] \ar[l] \\
M \ar@{=}[r] \ar[u] & M \ar@<-1ex>[r] \ar@<1ex>[r] \ar[u] & 0 \ar[l] \ar[u] }}
\caption{Tensor product of abelian objects}\label{Fig:Abelian}	
\end{figure}
Further note that 
\[
[M,N]=M\diamond N=(M\flat N)\wedge (N\flat M)\leq M+N.
\]
In fact, $M\otimes N\cong (M\diamond N)/J$, because indeed $J$ factors through both $M\flat N$ and $N\flat M$, as we shall see, and so it factors through their intersection $M\diamond N$ as well. Since, as $\nil_2(M+N)\cong \nil_2(M)+_2\nil_2(N)$, we have the $3\times 3$ diagram
\[
\resizebox{.6\textwidth}{!}{
\xymatrix@!0@R=5em@C=15em{[M+N,M+N,M+N] \ar@{{ |>}->}[r] \ar@{{ |>}->}[d] & [M+N,M+N,M+N] \ar@{-{ >>}}[r] \ar@{{ |>}->}[d] & 0 \ar@{{ |>}->}[d]\\
M\diamond N \ar@{{ |>}->}[r] \ar@{-{ >>}}[d] & M+N \ar@{-{ >>}}[r] \ar@{-{ >>}}[d] & M\times N \ar@{-{ >>}}[d]\\
M\diamond_2 N \ar@{{ |>}->}[r] & \nil_2(M)+_2\nil_2(N) \ar@{-{ >>}}[r] & M\times N,}}
\]
it now suffices to prove that $J$ coincides with $[M+N,M+N,M+N]$ in order to show that $M\otimes N\cong M\diamond_2N$. So let us compare the two. First of all, by algebraic coherence we can write $[M+N,M+N,M+N]=[[M+N,M+N],M+N]$. We use Proposition~\ref{Higgins properties} on the latter commutator. Since the kernel and the splitting in the split short exact sequence~\eqref{Def Flat} are jointly extremal-epimorphic, we have $M+N=(M\flat N)\vee M$; from the split short exact sequence
\[
\xymatrix@=3em{0 \ar[r]& N\diamond M\ar@{{ |>}->}[r] & M\flat N \ar@<.5ex>@{-{ >>}}[r]^{\binom{0}{1_N}\circ k_{M,N}} & N \ar[r] \ar@<.5ex>[l]^-{\eta^M_N} & 0}
\]
we deduce $M\flat N=(N\diamond M)\vee N$; whence
\begin{align*}
	[M+N,M+N]&=[M\flat N, M+N]\vee [M,M+N]\vee [M\flat N,M,M+N]\\
	&= [M\flat N, M\flat N]\vee [M\flat N, M]\vee[M\flat N, M\flat N, M]\\
	&= [M\flat N, M\flat N]\vee [M\flat N, M]\\
	&= [M\flat N, M\flat N]\vee [[N,M]\vee N,M]\\
	&= [M\flat N, M\flat N]\vee [[N,M],M]\vee [N,M]\vee [[N,M],N,M]\\
	&= [M\flat N, M\flat N]\vee [N,M].
\end{align*}
Note that
\begin{align*}
	[[N,M],M+N] &= [[N,M],M\flat N\vee N\flat M]\\
	&= [[N,M],M\flat N]\vee [[N,M],N\flat M]\vee [[N,M],M\flat N,N\flat M]\\
	&\leq [M\flat N,M\flat N]\vee [N\flat M,N\flat M]\vee [M\flat N,M\flat N,M+N]\\
	&= [M\flat N,M\flat N]\vee [N\flat M,N\flat M],
\end{align*}
so that
\begin{align*}
	&[[M+N,M+N],M+N]\\
	&=[[M\flat N, M\flat N]\vee [N, M],M+N]\\
	&=[[M\flat N, M\flat N], M+N]\vee [[N, M],M+N]\vee [[M\flat N, M\flat N], [N, M], M+N]\\
	&\leq[M\flat N, M\flat N]\vee [N\flat M,N\flat M] \vee [M\flat N, M\flat N, M+N]\\
	&=[M\flat N, M\flat N]\vee [N\flat M,N\flat M].
\end{align*}
Conversely, $[N\flat M,N\flat M] = [[N,M]\vee M, N\flat M]$ is
\begin{align*}
	 [[N,M], N\flat M]\vee [M, N\flat M]\vee [[N,M], M, N\flat M],
\end{align*}
whose terms are all contained in $[M+N,M+N,M+N]$, because 
\begin{align*}
	 [M, N\flat M] = [M, [N,M]\vee M]
	 = [M, [N,M]]\vee [M, M] \vee [M, [N,M],M]
\end{align*}
and $M$ is abelian. 
\end{proof}

\subsection{Lie algebras.}\label{SubsecLie}

The aim of this subsection is to show that the non-abelian tensor product of Lie algebras defined in~\cite{Ell91} coincides with the general definition of non-abelian tensor product when $\A=\LieAlg$, for any given commutative ring $R$. In order to do that we need to recall some definitions and results from~\cite{Ell91,CL98}.

From now on we will assume that $M$ and $N$ are two Lie algebras with crossed module structures on a common Lie algebra $L$, since in~\cite{dMVdL19} it is shown that this is the same as having two compatible actions of Lie algebras.

\begin{definition}[\cite{Ell91}]
\label{defi:non-abelian tensor product of Lie algebras}
Given two $R$-Lie algebras $M$ and $N$ acting on each other, their non-abelian tensor product $M\otimes_{\Lie} N$ is the Lie algebra generated by the symbols $m\otimes n$ with $m\in M$ and $n\in N$, subject to the relations
\begin{itemize}
\item $(\lambda m)\otimes n=\lambda(m\otimes n)=m\otimes (\lambda n)$,
\item $(m+m')\otimes n=m\otimes n+m'\otimes n$ and $m\otimes (n+n')=m\otimes n+m\otimes n'$,
\item $[m,m']\otimes n=m\otimes ({}^{m'}n)-m'\otimes ({}^{m}n)$ and $m\otimes [n,n']=({}^{n'}m)\otimes n-({}^{n}m)\otimes n'$,
\item $[m\otimes n,m'\otimes n']=-({}^{n}m)\otimes ({}^{m'}n')$,
\end{itemize}
for all $\lambda\in R$, $m$, $m'\in M$ and $n$, $n'\in N$.
\end{definition}

\begin{definition}[\cite{Ell91}]
\label{defi:Lie pairing}
Given two $R$-Lie algebras $M$ and $N$ acting on each other, and a third Lie algebra $P$, we say that a bilinear function $h\colon M\times N\to P$ is a \emph{Lie pairing} if
\begin{enumerate}
\item $h([m,m'],n)=h(m,{}^{m'}n)-h(m',{}^{m}n)$,
\item $h(m,[n,n'])=h({}^{n'}m,n)-h({}^{n}m,n')$,
\item $h({}^{n}m,{}^{m'}n')=-[h(m,n),h(m',n')]$,
\end{enumerate}
for all $m$, $m'\in M$ and $n$, $n'\in N$. The Lie pairing $h$ is said to be \emph{universal} if for any other Lie pairing $h'\colon M\times N\to P'$ there exists a unique Lie homomorphism $\phi\colon P\to P'$ such that $\phi\comp h=h'$.
\end{definition}

\begin{proposition}[Proposition~1 in~\cite{Ell91}]
\label{prop:prop 1 in Ell91}
Given two $R$-Lie algebras $M$ and $N$ acting on each other, $h\colon M\times N \to M\otimes_{\Lie} N\colon (m,n) \mapsto m\otimes n$ is a universal Lie pairing. 

Hence the non-abelian tensor product $M\otimes_{\Lie} N$ of two Lie algebras acting on each other is characterised (up to isomorphism) as the codomain of their universal Lie pairing.
\end{proposition}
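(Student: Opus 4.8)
The plan is to treat this as a standard \emph{universal-property-of-a-presentation} argument: the four defining relations of $M\otimes_{\Lie}N$ in Definition~\ref{defi:non-abelian tensor product of Lie algebras} are designed to coincide, one by one, with the axioms for a Lie pairing in Definition~\ref{defi:Lie pairing}, so that the canonical map $(m,n)\mapsto m\otimes n$ is the initial such pairing. I would first check that $h$ is itself a Lie pairing, then establish its universality by the usual descent-through-a-quotient argument, and finally deduce the characterisation up to isomorphism from the formal uniqueness of universal objects.

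To see that $h$ is a Lie pairing I would simply read off the relations. The additivity relations $(m+m')\otimes n=m\otimes n+m'\otimes n$ and $m\otimes(n+n')=m\otimes n+m\otimes n'$, together with $(\lambda m)\otimes n=\lambda(m\otimes n)=m\otimes(\lambda n)$, say exactly that $h$ is $R$-bilinear. The relation $[m,m']\otimes n=m\otimes({}^{m'}n)-m'\otimes({}^{m}n)$ is axiom~(1), the relation $m\otimes[n,n']=({}^{n'}m)\otimes n-({}^{n}m)\otimes n'$ is axiom~(2), and the bracket relation $[m\otimes n,m'\otimes n']=-({}^{n}m)\otimes({}^{m'}n')$ rearranges to axiom~(3). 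This step is therefore essentially tautological once the presentation is written down correctly.

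For universality I would realise $M\otimes_{\Lie}N$ explicitly as the quotient of the free Lie algebra on the set of symbols $m\otimes n$ (one for each pair $(m,n)\in M\times N$) by the ideal $I$ generated by the left-minus-right versions of all the defining relations. Given any Lie pairing $h'\colon M\times N\to P'$, the assignment $m\otimes n\mapsto h'(m,n)$ extends uniquely to a Lie homomorphism from the free Lie algebra into $P'$; the key point is that this homomorphism annihilates $I$, since the generators of $I$ are sent to the differences of the two sides of the pairing axioms, which vanish because $h'$ is a pairing (bilinearity disposes of the additivity and scalar generators, while axioms~(1)--(3) dispose of the remaining ones). Hence it factors through a Lie homomorphism $\phi\colon M\otimes_{\Lie}N\to P'$ with $\phi\comp h=h'$. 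Uniqueness is immediate, as the symbols $m\otimes n$ generate $M\otimes_{\Lie}N$ and $\phi$ is forced on them by the equation $\phi\comp h=h'$.

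The only genuinely substantive point---and the step I would check most carefully---is the interplay between the bracket relation and axiom~(3). This relation expresses the bracket of two generators once more as a single generator, so it is exactly what ensures that the free extension of $h'$ sends this particular generator of $I$ to zero, and hence that $\phi$ respects brackets rather than being merely bilinear. Once universality is in hand, the final sentence is purely formal: any two universal Lie pairings over the same $M$ and $N$ have canonically isomorphic codomains, obtained by comparing their universal factorisations, so $M\otimes_{\Lie}N$ is characterised up to isomorphism as the codomain of the universal Lie pairing.
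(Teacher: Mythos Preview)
The paper does not supply its own proof of this proposition: it is stated as Proposition~1 in~\cite{Ell91} and is used as a black box, so there is nothing in the paper to compare your argument against. Your argument is correct and is the expected one---the defining relations of $M\otimes_{\Lie}N$ are chosen so that each corresponds to one of the Lie pairing axioms, and universality then follows from the free-Lie-algebra-modulo-relations description, exactly as you outline.
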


\begin{definition}[\cite{Ell84,CL98}]
\label{defi:explicit definition of crossed square of Lie algebras}
A \emph{crossed square} in $\LieAlg$ is a commutative square of Lie algebras
\[
\xymatrix{
P \ar[r]^-{p_M} \ar[d]_-{p_N} & M \ar[d]^-{\mu}\\
N \ar[r]_-{\nu} & L
}
\]
endowed with Lie actions of $L$ on $P$, $M$ and $N$ (and hence Lie actions of $M$ on $N$ and~$P$ via $\mu$, and of $N$ on $M$ and $P$ via $\nu$) and a function $h\colon M\times N\to P$ such that
\begin{enumerate}[{\rm (X.1)}]
\setcounter{enumi}{-1}
\item $h$ is bilinear and satisfies
\[
\text{$h([m,m'],n)={}^{m}h(m',n)-{}^{m'}h(m,n)$, \quad $h(m,[n,n'])={}^{n}h(m,n')-{}^{n'}h(m,n)$;}
\]
 \item $p_M$ and $p_N$ are $L$-equivariant, and
 \[
 (M\xrightarrow{\mu}L,\xi_M),\qquad {(N\xrightarrow{\nu}L,\xi_N)}, \qquad(P\xrightarrow{\mu\circ p_M=\nu\circ p_N}L,\xi_P)
 \]
 are crossed modules;
 \item $p_M(h(m,n))=-{}^{n}m$ and $p_N(h(m,n))={}^{m}n$;
 \item $h(p_M(p),n)=-{}^{n}p$ and $h(m,p_N(p))={}^{m}p$;
 \item ${}^lh(m,n)=h({}^lm,n)+h(m,{}^ln)$;
\end{enumerate}
for all $l\in L$, $m$, $m'\in M$, $n$, $n'\in N$ and $p\in P$.
\end{definition}

\begin{lemma}[Theorem~30 in~\cite{CL98}]
\label{lemma:Lie crossed squares are the same as internal crossed squares in LieAlg}
Lie crossed squares, as just defined, coincide with internal crossed squares in the category $\LieAlg$.
\end{lemma}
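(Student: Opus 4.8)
The plan is to reduce the statement to the equivalence $\XSqr(\LieAlg)\simeq\Grpd^2(\LieAlg)$ which is built into the very definition of an internal crossed square (Definition~\ref{defi:implicit defi of crossed square}) through double normalisation. Once this is granted, it suffices to set up an equivalence between the explicit Lie crossed squares of Definition~\ref{defi:explicit definition of crossed square of Lie algebras} and internal double groupoids in $\LieAlg$ that is compatible with the underlying commutative squares; the lemma then follows by composition. This is exactly the content of Theorem~30 in~\cite{CL98}, and the argument I propose mirrors the group-theoretic one recalled in Section~\ref{Section:crossed squares}, where a crossed square of groups is identified with a $\cat^2$-group and hence with an object of $\Grpd^2(\Grp)$.

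For the forward direction I would start from an internal double groupoid as in Figure~\ref{Figure:Normalisation} and normalise it in both directions, obtaining the commutative square with corners $P$, $M=K_{d_R}$, $N=K_{d_D}$ and $L=D$, together with the $L$-actions induced by conjugation in the ambient Lie algebra $A$, exactly as in Subsection~\ref{rmk:definitions of diagonal action}. The crucial extra datum, the pairing $h\colon M\times N\to P$, I would define by sending $(m,n)$ to the Lie bracket $[e_U\comp k_{d_R}(m),e_L\comp k_{d_D}(n)]$ computed in $A$; that this lands in $P$ follows from commutator estimates of the type carried out in the proof of Proposition~\ref{prop:the diagonal is a crossed module}. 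Checking the axioms \X0--\X4 then amounts to translating Lie-algebra identities in $A$: \X0 is bilinearity together with the Jacobi identity (for Lie algebras the induced action is the adjoint action, so ${}^{m'}n=[m',n]$), \X1 is the fact that the horizontal and vertical normalisations of a groupoid are crossed modules (Janelidze's theorem, used twice) together with $L$-equivariance of the projections, while \X2--\X4 encode the relations between $h$, the projections $p_M$, $p_N$ and the conjugation actions, all of which follow from the groupoid identities linking the domain and codomain maps.

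For the converse I would reconstruct the double groupoid from the explicit data. The crossed modules $\mu$, $\nu$ and $\mu\comp p_M=\nu\comp p_N$ produce the three outer groupoids via $\XMod(\LieAlg)\simeq\Grpd(\LieAlg)$, and the remaining task is to build the top-left Lie algebra $A$ together with its two groupoid structures. Here the pairing $h$ supplies precisely the missing \lq\lq Peiffer lifting\rq\rq: I would present $A$ as a suitable iterated semidirect product of $P$, $M$, $N$ and $L$ whose bracket is twisted by $h$, and then verify that \X0--\X4 are exactly what is needed for the Jacobi identity on $A$ to hold and for the two induced reflexive graphs on $A$ to be groupoids---equivalently, for the relevant Higgins commutators of kernels to vanish, via Lemma~\ref{lemma: mult struc <=> HUQ=0}. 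Finally I would confirm that the two constructions are mutually inverse up to isomorphism and functorial, so that they assemble into an equivalence of categories fixing the underlying square.

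I expect the main obstacle to be the converse direction, and specifically the identification of the twisted bracket on the reconstructed object $A$: one must show that the component-wise axioms \X0--\X4, stated as relations among $h$, the brackets and the actions, are together equivalent to the single requirement that the twisted bracket on $A$ satisfies the Jacobi identity and that the two resulting reflexive graphs are internal groupoids. Matching the diagrammatic, commutator-theoretic conditions of an internal crossed module of crossed modules against the element-wise Lie identities of Definition~\ref{defi:explicit definition of crossed square of Lie algebras} is the delicate point; everything else is a routine, if lengthy, computation with semidirect products and the Jacobi identity.
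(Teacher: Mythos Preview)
The paper does not supply a proof of this lemma: it is stated as a citation of Theorem~30 in~\cite{CL98}, with no argument given. So there is nothing in the paper to compare your proposal against beyond the bare reference.

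That said, your outline is a faithful sketch of how the cited result is established. The forward direction---normalising a double groupoid and reading off the pairing as a Lie bracket in the ambient object---is exactly the mechanism the present paper later exploits in the proof of Proposition~\ref{Tensor for Lie algebras}, where $h(m,n)=[j_M(m),j_N(n)]$ is computed in $Q_{M\otimes N}$. For the converse, the reconstruction of the top-left object as an iterated semidirect product with bracket twisted by $h$ is precisely what Casas and Ladra do; your identification of the main obstacle (that \X0--\X4 together encode the Jacobi identity on the reconstructed object and the vanishing of the relevant kernel commutators) is correct. One small refinement: rather than building $A$ directly as a four-fold semidirect product, it is cleaner to first build the two crossed modules $(P\to M)$ and $(N\to L)$ into groupoids, and then construct the remaining groupoid structure on $(P\rtimes M)\rtimes(N\rtimes L)$; this keeps the verification of the Jacobi identity organised by the two reflexive-graph directions and makes the role of axiom \X3 (the Peiffer-type condition linking $h$ to the actions on $P$) more transparent.
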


\begin{lemma}[\cite{Khm99}]
\label{lemma:the non-abelian tensor product of Lie algebras gives us a crossed square}
For a pair of crossed modules $(M\xrightarrow{\mu}L,\xi_M)$ and $(N\xrightarrow{\nu}L,\xi_N)$, the square
\[
\xymatrix{
M\otimes_{\Lie}N \ar[r]^-{\rho_M} \ar[d]_-{\rho_N} & M \ar[d]^-{\mu}\\
N \ar[r]_-{\nu} & L
}
\]
in $\LieAlg$, with $\rho_M$ and $\rho_N$ defined via $\rho_M(m\otimes n)=-{}^{n}m$, $\rho_N(m\otimes n)={}^{m}n$
endowed with 
\begin{itemize}
 \item the actions $\xi_M$ and $\xi_N$,
 \item the action of $L$ on $M\otimes_{\Lie}N$ given by ${}^{l}(m\otimes n)\coloneq({}^{l}m)\otimes n + m\otimes ({}^{l}n)$,
\item the map $h\colon M\times N\to M\otimes N$ defined in Proposition~\ref{prop:prop 1 in Ell91},
\end{itemize}
is a crossed square (in the sense of Definition~\ref{defi:explicit definition of crossed square of Lie algebras}).
\end{lemma}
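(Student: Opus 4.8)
The plan is to verify directly the five axioms \X0--\X4 of Definition~\ref{defi:explicit definition of crossed square of Lie algebras}, working throughout on the generators $m\otimes n$ of $M\otimes_{\Lie}N$ and extending by bilinearity where needed. As a preliminary step I would record that the proposed data are well defined: the assignment ${}^{l}(m\otimes n)=({}^{l}m)\otimes n+m\otimes({}^{l}n)$ respects the defining relations of Definition~\ref{defi:non-abelian tensor product of Lie algebras}, so that it is a genuine $L$-action by derivations, and the maps given on generators by $\rho_M(m\otimes n)=-{}^{n}m$ and $\rho_N(m\otimes n)={}^{m}n$ are well-defined Lie homomorphisms. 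Commutativity of the square is then immediate, since both $\mu\comp\rho_M$ and $\nu\comp\rho_N$ send $m\otimes n$ to $[\mu(m),\nu(n)]$, using that the actions of $M$ on $N$ and of $N$ on $M$ are induced via $\mu$ and $\nu$ together with the precrossed identities $\nu({}^{m}n)=[\mu(m),\nu(n)]$ and $\mu({}^{n}m)=[\nu(n),\mu(m)]$.

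Axioms \X2 and \X4 hold by construction: \X2 is literally the definition of $\rho_M$ and $\rho_N$ evaluated on $h(m,n)=m\otimes n$, and \X4 is the definition of the $L$-action on the tensor. For \X0 and \X3 I would feed in the defining relations of $M\otimes_{\Lie}N$, which are exactly the Lie-pairing conditions of Definition~\ref{defi:Lie pairing} satisfied by $h$ (Proposition~\ref{prop:prop 1 in Ell91}); combining these with \X4 and the Peiffer identities ${}^{\mu(m)}m'=[m,m']$ and ${}^{\nu(n)}n'=[n,n']$ turns each required equality into an identity between the same two tensors. This is the Lie analogue of the group-level observation in Remark~\ref{rmk:h of a crossed square is a crossed pairing} that, once the actions are induced from $L$, the crossed-pairing relations and \X0 are interchangeable.

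The substantial content is \X1, the assertion that $\mu$, $\nu$ and the diagonal $\lambda=\mu\comp\rho_M=\nu\comp\rho_N\colon M\otimes_{\Lie}N\to L$ are crossed modules and that $\rho_M$, $\rho_N$ are $L$-equivariant. Equivariance of $\rho_M$ reduces, after expanding ${}^{l}(m\otimes n)$, to the Lie-action axiom ${}^{[l,l']}m={}^{l}({}^{l'}m)-{}^{l'}({}^{l}m)$ with $l'=\nu(n)$, and likewise for $\rho_N$; the precrossed identity for $\lambda$ then follows formally from equivariance of $\rho_M$ together with the precrossed identity for $\mu$. The one genuinely laborious step is the Peiffer condition ${}^{\lambda(p)}q=[p,q]$ for the diagonal, which on generators reads ${}^{[\mu(m),\nu(n)]}(m'\otimes n')=-({}^{n}m)\otimes({}^{m'}n')$. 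I expect this to be the main obstacle. The computation chains together the definition of the $L$-action, the Lie-action axiom, the two Peiffer identities and the derivation property of the induced actions in order to evaluate ${}^{[\mu(m),\nu(n)]}m'=-[{}^{n}m,m']$ and ${}^{[\mu(m),\nu(n)]}n'=[{}^{m}n,n']$; then the two remaining tensor relations (bracket in the first and in the second variable) collapse $-[{}^{n}m,m']\otimes n'+m'\otimes[{}^{m}n,n']$ down to $-({}^{n}m)\otimes({}^{m'}n')$. Care is needed because intermediate terms such as ${}^{\nu({}^{m}n)}m'$ and ${}^{\mu({}^{n}m)}n'$ reintroduce exactly the expressions just computed, so the cancellations must be organised so as not to loop. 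This Peiffer verification is the Lie-theoretic shadow of the abstract Proposition~\ref{prop:the diagonal is a crossed module}, but since we are matching the explicit Definition~\ref{defi:explicit definition of crossed square of Lie algebras} we carry it out by hand. Assembling \X0--\X4 then completes the proof.
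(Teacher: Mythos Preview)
The paper does not give its own proof of this lemma: it is quoted from \cite{Khm99} and used as a black box. Your direct verification of \X0--\X4 is therefore not comparable to anything in the paper, but it is the natural route and the outline is sound.

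Two remarks on execution. First, when you say you will check identities ``on the generators and extend by bilinearity'', make explicit why $R$-linearity is enough: the fourth defining relation $[m\otimes n,m'\otimes n']=-({}^{n}m)\otimes({}^{m'}n')$ shows that brackets of generators are again (signed) generators, hence $M\otimes_{\Lie}N$ is $R$-spanned by the elementary tensors. Without this observation, ``extending by bilinearity'' would not cover arbitrary $p\in M\otimes_{\Lie}N$ in \X3 or in the Peiffer check. Second, your sketch of the Peiffer computation is correct but slightly understated: after expanding ${}^{[\mu(m),\nu(n)]}(m'\otimes n')$ via the derivation rule and the two identities ${}^{[\mu(m),\nu(n)]}m'=-[{}^{n}m,m']$, ${}^{[\mu(m),\nu(n)]}n'=[{}^{m}n,n']$, the resulting expression contains four terms, and the three unwanted ones cancel only after one further pass that reintroduces and evaluates ${}^{{}^{n}m}n'=-[{}^{m}n,n']$ and ${}^{{}^{m}n}m'=-[{}^{n}m,m']$. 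You flag this looping hazard correctly; just be sure to record these two auxiliary identities explicitly so the cancellation is transparent.
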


\begin{proposition}\label{Tensor for Lie algebras}
When $\A=\LieAlg$, the non-abelian tensor product $M\otimes N$ as in Definition~\ref{defi:abstract non-abelian tensor product} coincides with the tensor product of Lie algebras $M\otimes_{\Lie} N$ from Definition~\ref{defi:non-abelian tensor product of Lie algebras}.
\end{proposition}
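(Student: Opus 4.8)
The plan is to show that both the intrinsic tensor product $M\otimes N$ of Definition~\ref{defi:abstract non-abelian tensor product} and the Lie tensor product $M\otimes_{\Lie}N$ of Definition~\ref{defi:non-abelian tensor product of Lie algebras} arise as top-left corners of internal crossed squares over $\mu$ and $\nu$, and then to exploit the two universal properties at our disposal---the initiality of $M\otimes N$ among such crossed squares (Proposition~\ref{prop:universal property of non-abelian tensor product}) and the universality of the Lie pairing with codomain $M\otimes_{\Lie}N$ (Proposition~\ref{prop:prop 1 in Ell91})---to produce a pair of mutually inverse comparison morphisms. That $M\otimes_{\Lie}N$ underlies an internal crossed square over $\mu$ and $\nu$ is exactly Lemma~\ref{lemma:the non-abelian tensor product of Lie algebras gives us a crossed square} together with Lemma~\ref{lemma:Lie crossed squares are the same as internal crossed squares in LieAlg}; by the same two lemmas, the intrinsic crossed square built in Construction~\ref{Construction Tensor} is, over $\LieAlg$, an explicit Lie crossed square in the sense of Definition~\ref{defi:explicit definition of crossed square of Lie algebras}, and in particular it comes equipped with a pairing $h_{\otimes}\colon M\times N\to M\otimes N$.

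First I would produce $\phi\colon M\otimes N\to M\otimes_{\Lie}N$ directly from Proposition~\ref{prop:universal property of non-abelian tensor product}, applied to the crossed square of Lemma~\ref{lemma:the non-abelian tensor product of Lie algebras gives us a crossed square}: this is the unique morphism with $\morph{\phi & 1_M}{1_N & 1_L}$ a morphism of internal crossed squares, and, being such, it carries $h_{\otimes}$ to the universal Lie pairing $h$, that is $\phi\comp h_{\otimes}=h$. For the reverse map I would first check that $h_{\otimes}$ is itself a Lie pairing (Definition~\ref{defi:Lie pairing}); granting this, Proposition~\ref{prop:prop 1 in Ell91} yields a unique Lie homomorphism $\psi\colon M\otimes_{\Lie}N\to M\otimes N$ with $\psi\comp h=h_{\otimes}$. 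One then verifies that $\psi$ is a morphism of crossed squares fixing $M$, $N$ and $L$: on the generators $m\otimes n$ the identities $\pi^{M\otimes N}_M\comp\psi=\rho_M$ and $\pi^{M\otimes N}_N\comp\psi=\rho_N$ reduce to axiom \X2 for the two crossed squares, since both sides send $m\otimes n$ to $-{}^{n}m$, respectively ${}^{m}n$; and $L$-equivariance of $\psi$ reduces, via \X4 and the defining formula ${}^{l}(m\otimes n)={}^{l}m\otimes n+m\otimes{}^{l}n$, to the equivariance of $h_{\otimes}$ expressed by \X4 for the intrinsic crossed square.

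The two composites are then identified using the uniqueness clauses of the respective universal properties. The endomorphism $\psi\comp\phi$ of $M\otimes N$ is a morphism of internal crossed squares that is the identity on $M$, $N$ and $L$, so by the uniqueness in Proposition~\ref{prop:universal property of non-abelian tensor product} (taken with target $M\otimes N$ itself) it equals $1_{M\otimes N}$; dually, $\phi\comp\psi$ satisfies $(\phi\comp\psi)\comp h=\phi\comp h_{\otimes}=h$, so by the uniqueness of the factorisation through the universal Lie pairing (Proposition~\ref{prop:prop 1 in Ell91}) it equals $1_{M\otimes_{\Lie}N}$. Hence $\phi$ is an isomorphism of $L$-crossed modules, which is the claim. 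The one genuinely non-formal step---and the step I expect to be the main obstacle---is verifying that the pairing $h_{\otimes}$ of the intrinsic crossed square is a Lie pairing. Conditions~(1) and~(2) of Definition~\ref{defi:Lie pairing} should follow from axioms \X0 and \X4 exactly as in the group case treated in Remark~\ref{rmk:h of a crossed square is a crossed pairing}, once one uses that the actions of $M$ and $N$ are induced from $L$ and that the Peiffer identity of $\mu\colon M\to L$ gives ${}^{m}m'=[m,m']$; condition~(3), the bracket identity $h_{\otimes}({}^{n}m,{}^{m'}n')=-[h_{\otimes}(m,n),h_{\otimes}(m',n')]$, is the one that genuinely uses the internal crossed-module structure, being obtained from the Peiffer identity of the diagonal crossed module $P\xrightarrow{\lambda}L$ of Proposition~\ref{prop:the diagonal is a crossed module} combined with \X2, \X3 and \X4.
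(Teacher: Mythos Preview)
Your overall strategy coincides with the paper's: build comparison maps $\phi\colon M\otimes N\to M\otimes_{\Lie}N$ and $\psi\colon M\otimes_{\Lie}N\to M\otimes N$ from the two available universal properties (Proposition~\ref{prop:universal property of non-abelian tensor product} and Proposition~\ref{prop:prop 1 in Ell91}), then argue they are mutually inverse. The substantive difference lies in how the pairing $h_{\otimes}\colon M\times N\to M\otimes N$ is produced and shown to be a Lie pairing. The paper defines it \emph{explicitly} by the formula $h(m,n)\coloneq[j_M(m),j_N(n)]$ inside $Q_{M\otimes N}$, checks it lands in $M\otimes N$ by evaluating under $\overline{d_U}$ and $\overline{d_L}$, and then verifies the three Lie pairing identities by direct use of the Jacobi identity in $Q_{M\otimes N}$. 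You instead invoke Lemma~\ref{lemma:Lie crossed squares are the same as internal crossed squares in LieAlg} to extract $h_{\otimes}$ as part of the Lie crossed square structure, and then deduce the Lie pairing identities from the axioms \X0--\X4 together with the Peiffer identity of the diagonal $L$-crossed module; your sketch for condition~(3) is correct (substitute $p=h_{\otimes}(m,n)$, $p'=h_{\otimes}(m',n')$, use \X2, \X3 and then Peiffer for $P\to L$).

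Each route has its merits. The paper's computation is self-contained and makes the pairing concrete, at the cost of a small ad hoc calculation. Your argument is cleaner conceptually and reuses existing machinery, but leans on the cited equivalence from~\cite{CL98} being functorial enough that a morphism of internal crossed squares automatically respects the $h$-maps (needed for $\phi\comp h_{\otimes}=h$); this is true, but worth stating. Your handling of $\psi\phi=1_{M\otimes N}$---by first verifying that $\psi$ is a morphism of crossed squares and then invoking the uniqueness clause of Proposition~\ref{prop:universal property of non-abelian tensor product}---is arguably more transparent than the paper's, which tacitly relies on the image of $h$ determining Lie algebra maps out of $M\otimes N$.
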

\begin{proof}
The first step is to construct a Lie pairing from $M\times N$ to $M\otimes N$. We consider Figure~\ref{Fig:CrSqWithTensor} and denote with $j_M$ and $j_N$ the diagonal inclusions of $M$ and $N$ in $Q_{M\otimes N}$. We are going to define a function $h$ from $M\times N$ to $Q_{M\otimes N}$, and show that it factors through $M\otimes N$ as $h\colon M\times N\to M\otimes N$. Then we prove that it is a Lie pairing.

Since we are in $\LieAlg$ we can define $h$ directly on the elements by imposing $h(m,n)\coloneq[j_M(m),j_N(n)]$. To prove that it factors through $M\otimes N$ it suffices that $\overline{d_U}\comp h=0=\overline{d_L}\comp h$, the rest being trivial since $M\otimes N$ is the pullback of the kernels $K_{\overline{d_U}}$ and $K_{\overline{d_L}}$. This is done through the equalities
\begin{align*}
\overline{d_U} h(m,n)=\overline{d_U}([j_M(m),j_N(n))
=[\overline{d_U}j_M(m),\overline{d_U}j_N(n)]
=[\overline{d_U}j_M(m),0]=0
\end{align*}
and
\begin{align*}
\overline{d_L} h(m,n)&=\overline{d_L}([j_M(m),j_N(n)])
=[\overline{d_L}j_M(m),\overline{d_L}j_N(n)]
=[0,\overline{d_L}j_N(n)]=0.
\end{align*}
Thus we obtain $h\colon M\times N\to M\otimes N$. Let us prove that it is a Lie pairing. For 1.\ we have the chain of equalities
\begin{align*}
h([m,m'],n)&=[j_M([m,m']),j_N(n)]=[[j_M(m),j_M(m')],j_N(n)]\\
&=-[[j_M(m'),j_N(n)],j_M(m)]-[[j_N(n),j_M(m)],j_M(m')]\\
&=[j_M(m),[j_M(m'),j_N(n)]]-[j_M(m'),[j_M(m),j_N(n)]]\\
&=[j_M(m),j_N({}^{m'}n)]-[j_M(m'),j_N({}^{m}n)]
=h(m,{}^{m'}n)-h(m',{}^{m}n)
\end{align*}
and a similar one shows 2.; for 3.\ we have
\begin{align*}
h({}^{n}m,{}^{m'}n')&=[j_M({}^{n}m),j_N({}^{m'}n')]
=[[j_N(n),j_M(m)],[j_M(m'),j_N(n')]]\\
&=-[[j_M(m),j_N(n)],[j_M(m'),j_N(n')]]
=-[h(m,n),h(m',n')].
\end{align*}

By Proposition~\ref{prop:prop 1 in Ell91} we may take a universal Lie pairing~$\Tilde{h}$. This provides us with a unique morphism $\psi$ such that the triangle
\begin{equation}
\label{diag:triangle with Lie pairing}
\vcenter{
\xymatrix@!0@=3em{
& M\times N \ar[ld]_-{h} \ar[rd]^-{\Tilde{h}} \\
M\otimes N \ar@<-.5ex>@{.>}[rr]_-{\phi} && M\otimes_{\Lie} N \ar@<-.5ex>@{.>}[ll]_-{\psi}.
}
}
\end{equation}
commutes.  We next show that there is a unique $\phi$ such that $\phi\comp h=\Tilde{h}$. This then implies that $\phi$ and $\psi$ are each other's inverse, so that $M\otimes N\cong M\otimes_{\Lie}N$.

We use Lemma~\ref{lemma:the non-abelian tensor product of Lie algebras gives us a crossed square} which tells us that the non-abelian tensor product $M\otimes_{\Lie}N$ induces a crossed square of Lie algebras
\[
\xymatrix{
M\otimes_{\Lie}N \ar[r] \ar[d] & M \ar[d]\\
N \ar[r] & L
}
\]
in the sense of Definition~\ref{defi:explicit definition of crossed square of Lie algebras}. By Lemma~\ref{lemma:Lie crossed squares are the same as internal crossed squares in LieAlg} we know that Definition~\ref{defi:implicit defi of crossed square} in~$\LieAlg$ coincides with Definition~\ref{defi:explicit definition of crossed square of Lie algebras} and hence we can use the universal property of the tensor product ${M\otimes N}$---Proposition~\ref{prop:universal property of non-abelian tensor product}---which yields the needed unique morphism $\phi\colon {M\otimes N\to M\otimes_{\Lie}N}$ such that~\eqref{diag:triangle with Lie pairing} commutes.
\end{proof}

Hence from now on we may write $\rho_M$ and $\rho_N$ as $\pi^{M\otimes N}_M$ and $\pi^{M\otimes N}_N$, respectively.

\section{Internal crossed squares through the non-abelian tensor product}
\label{section:towards crossed squares through the non-abelian tensor product}

The aim of this section is to generalise the explicit description of crossed squares of groups (given in Definition~\ref{defi:explicit defi of crossed square of groups}) and Lie algebras (given in Definition~\ref{defi:explicit definition of crossed square of Lie algebras}) to the semi-abelian case, without passing through the double groupoid formalism. In order to do so, we use the construction of the non-abelian tensor product, first in the categories $\Grp$ and $\LieAlg$, and then in a general $\A$ (which is semi-abelian with \SH).
We call the object we obtain a \emph{weak crossed square}, and we prove that weak crossed squares are the same as crossed squares in $\Grp$ or $\LieAlg$. We then show that in the semi-abelian context, each double groupoid gives rise to a weak crossed square. 
The converse is still an open question: the aim would be to find suitable conditions on the surrounding category under which we have an equivalence. Under such conditions we have an explicit description of what is a crossed square.

The idea behind this internalisation is given by a bijection introduced in~\cite{BL87} (see their Definition~2.2 and following): the authors say that, given a pair of compatible group actions, to each crossed pairing $h\colon M\times N\to P$ corresponds a group homomorphism $h^*\colon M\otimes N\to P$ defined by $h^*(m\otimes n)=h(m,n)$. From now on we will write $h$ for both these maps, since there is no risk of confusion.

Using this hint as the basis of our reasoning we give the following definition.

\begin{definition}
\label{defi:weak crossed squares via the non-abelian tensor product}
Let $\A$ be a semi-abelian category that satisfies \SH. An \emph{(internal) weak crossed square in $\A$} is given by a commutative square
\[
\xymatrix{
P \ar@{.>}[rd]|-{\lambda} \ar[r]^-{p_M} \ar[d]_-{p_N} & M \ar[d]^-{\mu}\\
N \ar[r]_-{\nu} & L
}
\]
in $\A$, together with internal actions
\begin{align*}
\xi^L_M\colon L\flat M\to M && \xi^L_N\colon L\flat N\to N && \xi^L_P\colon L\flat P\to P
\end{align*}
and a morphism $h\colon M\otimes N\to P$ such that the following axioms hold:
\begin{enumerate}[{\rm (W.1)}]
\item the maps $p_M$ and $p_N$ are equivariant with respect to the $L$-actions: the squares
 \begin{align*}
 \xymatrix{
 L\flat P \ar[r]^-{\xi^L_P} \ar[d]_-{1_L\flat p_M} & P \ar[d]^-{p_M}\\
 L\flat M \ar[r]_-{\xi^L_M} & M
 }
 &&
 \xymatrix{
 L\flat P \ar[r]^-{\xi^L_P} \ar[d]_-{1_L\flat p_N} & P \ar[d]^-{p_N}\\
 L\flat N \ar[r]_-{\xi^L_N} & N
 }
 \end{align*}
 commute; furthermore, $(M\xrightarrow{\mu}L,\xi^L_M)$, $(N\xrightarrow{\nu}L,\xi^L_N)$ and ${(P\xrightarrow{\lambda}L,\xi^L_P)}$ are $L$-crossed modules;
 \item the diagram
 $
 \xymatrixcolsep{3pc}
 \xymatrixrowsep{2pc}
 \vcenter{\xymatrix{
 & M\otimes N \ar[rd]^-{\pi_M^{M\otimes N}} \ar[ld]_-{\pi_N^{M\otimes N}} \ar[d]_-{h}\\
 N & P \ar[r]_-{p_M} \ar[l]^-{p_N} & M
 }}
 $
 commutes;
 \item the diagram
 $
 \xymatrixcolsep{3pc}
 \xymatrixrowsep{2pc}
 \vcenter{\xymatrix{
 P\otimes N \ar[r]^-{p_M\otimes 1_N} \ar[rd]_-{\pi_P^{P\otimes N}} & M\otimes N \ar[d]_-{h} & M\otimes P \ar[l]_-{1_M\otimes p_N} \ar[ld]^-{\pi_P^{M\otimes P}}\\
 & P
 }}
 $
 commutes;
 \item the map $h$ is equivariant with respect to the action $\xi^L_{M\otimes N}\colon L\flat (M\otimes N)\to M\otimes N$ (induced as in Remark~\ref{rmk:definitions of diagonal action}); that is, the square 
 \[
 \xymatrix{
 L\flat(M\otimes N) \ar[d]_-{1_L\flat h} \ar[r]^-{\xi^L_{M\otimes N}} & M\otimes N \ar[d]^-{h}\\
 L\flat P \ar[r]_-{\xi^L_P} & P
 }
 \]
 commutes.
\end{enumerate}
A \emph{morphism of weak crossed squares}
\[
\xymatrixcolsep{1pc}
\xymatrixrowsep{1pc}
\xymatrix{
P \ar[rr]^-{p_M} \ar[dd]_-{p_N} && M \ar[dd]^-{\mu} &&&& P' \ar[rr]^-{p'_{M'}} \ar[dd]_-{p'_{N'}} && M' \ar[dd]^-{\mu'}\\
&&& \ar[rr]^-{\morph{p & f}{g & l}} && \\
N \ar[rr]_-{\nu} && L &&&& N' \ar[rr]_-{\nu'} && L'
}
\]
is given by a quadruple of morphisms 
\[
p\colon P\to P' \qquad f\colon M\to M'\qquad g\colon N\to N' \qquad l\colon L\to L'
\]
such that the obvious cube commutes and the $h$-maps are respected; that is, the square
\[
\xymatrix{
M\otimes N \ar[r]^-{f\otimes g} \ar[d]_-{h} & M'\otimes N' \ar[d]^-{h'}\\
P \ar[r]_-{p} & P'
}
\]
commutes as well.
\end{definition}

\begin{remark}
\label{rmk:defi of other actions from definition of crossed square}
From the three $L$-actions $\xi^L_M$, $\xi^L_N$ and $\xi^L_P$ we construct the actions $\xi^M_P$, $\xi^N_P$, $\xi^M_N$ and $\xi^N_M$ through the diagrams
\begin{align*}
\xymatrix{
M\flat P \ar[rd]_-{\xi^M_P} \ar[rr]^-{\mu\flat 1_P} && L\flat P \ar[ld]^-{\xi^L_P}\\
& P
}
&&
\xymatrix{
N\flat P \ar[rd]_-{\xi^N_P} \ar[rr]^-{\nu\flat 1_P} && L\flat P \ar[ld]^-{\xi^L_P}\\
& P
}
\\
\xymatrix{
M\flat N \ar[rd]_-{\xi^M_N} \ar[rr]^-{\mu\flat 1_N} &&  L\flat N \ar[ld]^-{\xi^L_N}\\
& N
}
&&
\xymatrix{
N\flat M \ar[rd]_-{\xi^N_M} \ar[rr]^-{\nu\flat 1_M} && L\flat M \ar[ld]^-{\xi^L_M}\\
& M
}
\end{align*}
Condition \W1 implies that also $(P\xrightarrow{p_M}M,\xi^M_P)$ and $(P\xrightarrow{p_N}N,\xi^N_P)$ are crossed modules. This is an application of the following lemma.
\end{remark}

\begin{lemma}
Let $\A$ be a semi-abelian category with \SH. Consider a triangle
\[
\xymatrix@!0@!=1em{
P \ar[rr]^-{p} \ar[rd]_-{\lambda} && M\ar[dl]^-{\mu} \\
& L
}
\]
with internal crossed module structures $(M\xrightarrow{\mu}L,\xi^L_M)$ and $(P\xrightarrow{\lambda}L,\xi^L_P)$, and the induced action $\xi^M_P\coloneq\xi^L_P\comp (\mu\flat 1_P)$. If $p$ is equivariant with respect to the $L$-actions, i.e., the square
\[
\xymatrix{
L\flat P \ar[r]^-{\xi^L_P} \ar[d]_-{1_L\flat p} & P \ar[d]^-{p}\\
L\flat M \ar[r]_-{\xi^L_M} & M
}
\]
commutes, then also $(P\xrightarrow{p}M,\xi^M_P)$ is an internal crossed module.
\end{lemma}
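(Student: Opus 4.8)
The plan is to verify directly the two defining conditions of an internal crossed module for $(P\xrightarrow{p}M,\xi^M_P)$---the precrossed module (equivariance) condition and the Peiffer condition---by reducing each of them to the corresponding property of one of the two given $L$-crossed modules. The only structural remark needed at the outset is that $\xi^M_P=\xi^L_P\comp(\mu\flat 1_P)$ is genuinely an internal action of $M$ on $P$, i.e.\ an algebra for the monad $M\flat(-)$ and not merely a morphism: it is the restriction of the $L$-action $\xi^L_P$ along $\mu$, which on the side of points corresponds to pulling back the split extension $P\rtimes_{\xi^L_P}L\to L$ along $\mu\colon M\to L$. Throughout I shall use the bifunctoriality of $\flat$ in the form $(f'\flat g')\comp(f\flat g)=(f'\comp f)\flat(g'\comp g)$.

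First I would establish the precrossed module condition $p\comp\xi^M_P=\chi_M\comp(1_M\flat p)$. Starting from $p\comp\xi^M_P=p\comp\xi^L_P\comp(\mu\flat 1_P)$, the hypothesis that $p$ is $L$-equivariant gives $p\comp\xi^L_P=\xi^L_M\comp(1_L\flat p)$, so that $p\comp\xi^M_P=\xi^L_M\comp(1_L\flat p)\comp(\mu\flat 1_P)$. By bifunctoriality this composite equals $\xi^L_M\comp(\mu\flat p)=\xi^L_M\comp(\mu\flat 1_M)\comp(1_M\flat p)$. Finally, the Peiffer condition for the crossed module $(M\xrightarrow{\mu}L,\xi^L_M)$ is exactly $\xi^L_M\comp(\mu\flat 1_M)=\chi_M$, whence $p\comp\xi^M_P=\chi_M\comp(1_M\flat p)$, as required.

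Next I would check the Peiffer condition for $(P\xrightarrow{p}M,\xi^M_P)$, namely $\xi^M_P\comp(p\flat 1_P)=\chi_P$. Here the computation is even shorter: $\xi^M_P\comp(p\flat 1_P)=\xi^L_P\comp(\mu\flat 1_P)\comp(p\flat 1_P)=\xi^L_P\comp((\mu\comp p)\flat 1_P)=\xi^L_P\comp(\lambda\flat 1_P)$, using bifunctoriality together with the factorisation $\lambda=\mu\comp p$. But $\xi^L_P\comp(\lambda\flat 1_P)=\chi_P$ is precisely the Peiffer condition for the crossed module $(P\xrightarrow{\lambda}L,\xi^L_P)$, which closes the argument.

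I do not expect any serious obstacle: the statement is essentially a bookkeeping exercise, and the only thing requiring a moment's care is keeping track of which of the two given crossed module structures supplies each fact. Notice in particular that only the Peiffer conditions of the ambient $L$-crossed modules $M$ and $P$ are used---the equivariance conditions of $\mu$ and $\lambda$ over $L$ play no role---so the sole substantive input beyond pure functoriality is the $L$-equivariance of $p$ assumed in the statement. The one point I would be careful to spell out, since it is used implicitly, is the restriction-of-actions remark made at the start, guaranteeing that $\xi^M_P$ is indeed an internal action.
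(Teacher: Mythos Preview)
Your proof is correct and is essentially identical to the paper's: both arguments verify the precrossed module and Peiffer conditions for $(P\xrightarrow{p}M,\xi^M_P)$ via the same chains of equalities, reducing them respectively to the Peiffer conditions of $(M\xrightarrow{\mu}L,\xi^L_M)$ and $(P\xrightarrow{\lambda}L,\xi^L_P)$ through bifunctoriality of $\flat$ and the $L$-equivariance of $p$. Your additional remark that $\xi^M_P$ is a genuine internal action (as the pullback of $\xi^L_P$ along $\mu$) is a welcome point of care that the paper leaves implicit.
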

\begin{proof}
We need to show the commutativity of the squares
\[
\xymatrix{
P\flat P \ar[r]^-{\chi_P} \ar[d]_-{p\flat 1_P} & P \ar@{=}[d]\\
M\flat P \ar[r]_-{\xi^M_P} & P
}
\qquad\qquad
\xymatrix{M\flat P \ar[r]^-{\xi^M_P} \ar[d]_-{1_M\flat p} & P \ar[d]^-{p}\\
M\flat M \ar[r]_-{\chi_M} & M.}
\]
For the left one we have the chain of equalities
\[
\xi^M_P\comp(p\flat 1_P)=\xi^L_P\comp (\mu\flat 1_P)\comp(p\flat 1_P)=\xi^L_P\comp (\lambda\flat 1_P)=\chi_P,
\]
and we have
\begin{align*}
p\comp\xi^M_P=p\comp\xi^L_P\comp (\mu\flat 1_P)=\xi^L_M \comp(1_L\flat p)\comp (\mu\flat 1_P)
=\xi^L_M \comp(\mu\flat 1_M)\comp(1_L\flat p)=\chi_M\comp(1_L\flat p)
\end{align*}
for the right one.
\end{proof}

\begin{proposition}
\label{prop:explicit definition of crossed squares in grp coincides with the explicit definition of crossed square in A}
If $\A=\Grp$, then weak crossed squares are the same as internal crossed squares, that is the group version of Definition~\ref{defi:weak crossed squares via the non-abelian tensor product} is equivalent to Definition~\ref{defi:explicit defi of crossed square of groups}.
\end{proposition}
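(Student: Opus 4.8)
The plan is to pivot on the bijection recalled just before Definition~\ref{defi:weak crossed squares via the non-abelian tensor product}: for a fixed pair of coterminal crossed modules, I would argue that set maps $h\colon M\times N\to P$ which are crossed pairings (Definition~\ref{defi:crossed pairing}) correspond bijectively to group homomorphisms $h^*\colon M\otimes N\to P$ via $h^*(m\otimes n)=h(m,n)$. The point is that the relations defining $M\otimes N$ in Definition~\ref{defi:non-abelian tensor product of groups} are exactly the crossed-pairing relations, so a function $h$ respects them precisely when it factors through the quotient as a homomorphism. Thus, starting from a crossed square in the sense of Definition~\ref{defi:explicit defi of crossed square of groups}, Remark~\ref{rmk:h of a crossed square is a crossed pairing} shows (using \X4) that its function $h$ is a crossed pairing, hence yields $h^*\colon M\otimes N\to P$; conversely, the homomorphism $h$ of a weak crossed square restricts along $(m,n)\mapsto m\otimes n$ to a crossed pairing. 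I would then check that, under this dictionary, the two axiom systems match.

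Next I would translate the four remaining conditions one by one. Conditions \W1 and \X1 are literally identical. Using the element-wise formulas $\pi^{M\otimes N}_M(m\otimes n)=m\,{}^{n}m^{-1}$ and $\pi^{M\otimes N}_N(m\otimes n)={}^{m}n\,n^{-1}$ of Proposition~\ref{prop:prop2.15 in BL87}, condition \W2 reads $p_M(h(m,n))=m\,{}^{n}m^{-1}$ and $p_N(h(m,n))={}^{m}n\,n^{-1}$, which is \X2. For \W3 I would first note that $(P\xrightarrow{\lambda}L,\xi^L_P)$ is a crossed module by \W1, so the auxiliary tensor products $P\otimes N$ and $M\otimes P$ are defined, with $\pi^{P\otimes N}_P(p\otimes n)=p\,{}^{n}p^{-1}$ and $\pi^{M\otimes P}_P(m\otimes p)={}^{m}p\,p^{-1}$ again by Proposition~\ref{prop:prop2.15 in BL87}; since $(p_M\otimes 1_N)(p\otimes n)=p_M(p)\otimes n$ and $(1_M\otimes p_N)(m\otimes p)=m\otimes p_N(p)$ by Proposition~\ref{Proto Tensor Functor}, condition \W3 becomes $h(p_M(p),n)=p\,{}^{n}p^{-1}$ and $h(m,p_N(p))={}^{m}p\,p^{-1}$, that is \X3. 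Finally, the $L$-action on $M\otimes N$ is the diagonal one, given on generators by ${}^{l}(m\otimes n)={}^{l}m\otimes {}^{l}n$, so \W4 says ${}^{l}h(m,n)=h({}^{l}m,{}^{l}n)$, which is \X4. The one classical axiom with no explicit weak counterpart is \X0, but modulo \X4 it coincides, via Remark~\ref{rmk:h of a crossed square is a crossed pairing}, with the assertion that $h$ is a crossed pairing, i.e. with the well-definedness of $h$ as a homomorphism on $M\otimes N$; it is therefore absorbed into the standing hypothesis of Definition~\ref{defi:weak crossed squares via the non-abelian tensor product}.

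It then remains to match morphisms and to confirm that the two assignments are mutually inverse. A quadruple $(p,f,g,l)$ is compatible with the functions $h$, $h'$ exactly when $p(h(m,n))=h'(f(m),g(n))$ for all $m$, $n$; and since $(f\otimes g)(m\otimes n)=f(m)\otimes g(n)$ by Proposition~\ref{Proto Tensor Functor}, this is equivalent to the commutativity $p\comp h=h'\comp(f\otimes g)$ of homomorphisms demanded in Definition~\ref{defi:weak crossed squares via the non-abelian tensor product}. The passages $h\mapsto h^*$ and $h^*\mapsto\bigl((m,n)\mapsto h^*(m\otimes n)\bigr)$ are inverse to one another by the uniqueness in the universal property of $M\otimes N$, which yields the claimed identification. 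The hard part will be the bookkeeping in \W3: I expect the main care to be needed in verifying that the projections out of the auxiliary tensor products $P\otimes N$ and $M\otimes P$ do carry the expected element-wise formulas and that $p_M\otimes 1_N$ and $1_M\otimes p_N$ act as written on generators, together with the slightly delicate logical point that \X0 is not an extra weak axiom but coincides, modulo \X4, with the very well-definedness of $h$ on $M\otimes N$; the remaining translations are direct.
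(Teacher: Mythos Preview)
Your proposal is correct and follows essentially the same route as the paper's proof: both pivot on the bijection between crossed pairings $M\times N\to P$ and group homomorphisms $M\otimes N\to P$, then translate the axioms one by one using the explicit element-wise formulas for the projections $\pi^{M\otimes N}_M$, $\pi^{M\otimes N}_N$ and the diagonal action ${}^l(m\otimes n)={}^lm\otimes{}^ln$, with \X0 absorbed (via \X4 and Remark~\ref{rmk:h of a crossed square is a crossed pairing}) into the very existence of the homomorphism $h$. Your treatment is slightly more thorough in that you explicitly match morphisms and record the mutual inverseness of $h\mapsto h^*$ and its converse, which the paper leaves implicit.
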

\begin{proof}
As explained in~\cite{BL87}, given a crossed pairing $h\colon M\times N\to P$ (see Remark~\ref{rmk:h of a crossed square is a crossed pairing}), we can decompose it as 
\[
\xymatrix@!0@!=1em{
M\times N \ar[rd]_-{h} \ar[rr]^-{-\otimes -} && M\otimes N \ar[ld]^-{h^*}\\
& P
}
\]
where the horizontal map, which sends $(m,n)$ to $m\otimes n$, is called the \emph{universal crossed pairing}, whereas $h^*$ is a morphism of groups. Vice versa, we can associate a crossed pairing $h^*\comp(-\otimes -)$ to every morphism of groups $h^*\colon M\otimes N\to P$. This means that giving a crossed pairing amounts to giving a morphism out of the non-abelian tensor product. For the sake of simplicity, we are going to denote both of them as $h$.

Notice that \W1 is precisely the internal reformulation of \X1, which makes them equivalent. Let us prove that \X2 $\iff$ \W2. The only non-trivial step is given by the explicit description
\[
\pi_M^{M\otimes N}(m\otimes n)=m{}^{n}m^{-1},\qquad\qquad
\pi_N^{M\otimes N}(m\otimes n)={}^{m}nn^{-1}
\]
for the projection maps: for further details see Proposition~2.3.b in~\cite{BL87}. Using these equations together with $p_M(h(m\otimes n))=p_M(h(m,n))$ and $
p_N(h(m\otimes n))=p_N(h(m,n))$ we obtain the desired equivalence.

Similarly, in order to show \X3 $\iff$ \W3, we use the equations
\begin{align*}
\begin{cases}
\pi_P^{P\otimes N}(p\otimes n)=p{}^{n}p^{-1},\\ \pi_P^{M\otimes P}(m\otimes p)={}^{m}pp^{-1},
\end{cases}
&&
\begin{cases}
h(p_M(p)\otimes n)=h(p_M(p),n),\\
h(m\otimes p_N(p))=h(m,p_N(p)).
\end{cases}
\end{align*}
We have already explained that, whenever \X4 holds, \X0 is equivalent to the requirement that $h\colon M\times N\to P$ is a crossed pairing, and that this is in turn equivalent to having a morphism $h\colon M\otimes N\to P$.

Finally, to show that \X4 $\iff$ \W4, we first take the action $\xi^L_{M\otimes N}$ as defined in Remark~\ref{rmk:definitions of diagonal action}: in the particular case of groups it can be described through the equation 
\[
{}^l(m\otimes n)=({}^lm)\otimes({}^ln).
\]
(For more details about this action see Proposition~2.3.a in~\cite{BL87}). Then the equations
\[
{}^lh(m\otimes n)={}^lh(m,n),
\qquad
h({}^lm\otimes{}^ln)=h({}^lm,{}^ln)
\]
prove our claim.
\end{proof}

\begin{remark}
\label{rmk:the morphism of crossed squares to the tensor product one and the map induced from the crossed pairing are the same}
Consider a crossed square of groups as in Definition~\ref{defi:explicit defi of crossed square of groups}: according to Proposition~\ref{prop:universal property of non-abelian tensor product} we have a unique morphism $\phi\colon M\otimes N\to P$ such that $\morph{\phi & 1_M}{1_N & 1_L}$ is a morphism of crossed squares. In particular this map $\phi$ is the same as the map ${h\colon M\otimes N\to P}$ induced by the crossed pairing $h\colon M\times N\to P$. To see this, it suffices to show that $\morph{h & 1_M}{1_N & 1_L}$ is again a morphism of crossed squares: following the description of morphisms given in Definition~\ref{defi:weak crossed squares via the non-abelian tensor product}, this amounts to proving that $h$ makes the outer cube in Figure~\ref{Fig:universal property of non-abelian tensor product} commute as well as the diagram
\[
\xymatrix{
M\otimes N \ar@{=}[r] \ar@{=}[d] & M\otimes N \ar[d]^-{h}\\
M\otimes N \ar[r]_-{h} & P.
}
\]
The latter is trivial, and the former is given by condition \W2.
\end{remark}

Using the same reasoning as in the last remark we can deduce that, if there is a way to show the equivalence between the notion of weak crossed square and the one of internal crossed square, then the morphism $h\colon M\otimes N\to P$ has to be the one given by Proposition~\ref{prop:universal property of non-abelian tensor product}.

\begin{proposition}
\label{prop:explicit definition of crossed squares in LieAlg coincides with the explicit definition of crossed square in A}
In $\A=\LieAlg$, weak crossed squares are the same as internal crossed squares, so that the Lie algebra version of Definition~\ref{defi:weak crossed squares via the non-abelian tensor product} coincides with Definition~\ref{defi:explicit definition of crossed square of Lie algebras}.
\end{proposition}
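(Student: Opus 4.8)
The plan is to follow the blueprint of Proposition~\ref{prop:explicit definition of crossed squares in grp coincides with the explicit definition of crossed square in A}, replacing the dictionary between crossed pairings and morphisms out of the tensor product by the corresponding dictionary for Lie pairings. Since Lemma~\ref{lemma:Lie crossed squares are the same as internal crossed squares in LieAlg} identifies Lie crossed squares (Definition~\ref{defi:explicit definition of crossed square of Lie algebras}) with internal crossed squares, it suffices to show that the Lie-algebra version of Definition~\ref{defi:weak crossed squares via the non-abelian tensor product} is equivalent to Definition~\ref{defi:explicit definition of crossed square of Lie algebras}. First I would recall, via Proposition~\ref{prop:prop 1 in Ell91} together with the isomorphism $M\otimes N\cong M\otimes_{\Lie}N$ from Proposition~\ref{Tensor for Lie algebras}, that a Lie homomorphism $h\colon M\otimes N\to P$ is the same datum as a Lie pairing $h\colon M\times N\to P$ in the sense of Definition~\ref{defi:Lie pairing}, the two being related by $h(m,n)=h(m\otimes n)$. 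Under this correspondence the three Lie-pairing conditions of Definition~\ref{defi:Lie pairing}---which encode precisely the defining relations of $M\otimes_{\Lie}N$ in Definition~\ref{defi:non-abelian tensor product of Lie algebras}---translate, in the presence of \X4 and the fact that the $M$- and $N$-actions are induced from $L$, into the bilinearity and bracket identities of \X0. Thus specifying the map $h$ in a weak crossed square is the same as specifying an \X0-compatible pairing.

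Then I would check the remaining axioms one at a time, exactly as in the group case but with the element-wise formulas furnished by Lemma~\ref{lemma:the non-abelian tensor product of Lie algebras gives us a crossed square}. Condition \W1 is the internal reformulation of \X1 (equivariance of $p_M$, $p_N$ together with the three crossed-module structures), and the implication that also yields the crossed modules $(P\to M)$ and $(P\to N)$ is the lemma proved just above Proposition~\ref{prop:explicit definition of crossed squares in grp coincides with the explicit definition of crossed square in A}. For \W2 $\iff$ \X2 I would use the explicit projections $\pi_M^{M\otimes N}(m\otimes n)=-{}^{n}m$ and $\pi_N^{M\otimes N}(m\otimes n)={}^{m}n$ (these are the maps $\rho_M$, $\rho_N$ of Lemma~\ref{lemma:the non-abelian tensor product of Lie algebras gives us a crossed square}, identified with the $\pi$'s after Proposition~\ref{Tensor for Lie algebras}), comparing them with $p_M(h(m,n))=-{}^{n}m$ and $p_N(h(m,n))={}^{m}n$. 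For \W3 $\iff$ \X3 I would use the analogous formulas $\pi_P^{P\otimes N}(p\otimes n)=-{}^{n}p$ and $\pi_P^{M\otimes P}(m\otimes p)={}^{m}p$. Finally, \W4 $\iff$ \X4 follows from the explicit description ${}^{l}(m\otimes n)={}^{l}m\otimes n+m\otimes{}^{l}n$ of the action $\xi^L_{M\otimes N}$, again recorded in Lemma~\ref{lemma:the non-abelian tensor product of Lie algebras gives us a crossed square}, evaluated on both sides of the equivariance square.

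I expect the main obstacle to be the first step: verifying that the Lie-pairing relations are genuinely equivalent to \X0. Unlike the other axioms, this equivalence is not a direct substitution of an explicit formula but requires combining \X0 with \X4 and the hypothesis that the actions of $M$ on $N$ and of $N$ on $M$ factor through $\mu$ and $\nu$; the signs introduced by the Lie bracket make the bookkeeping more delicate than in the group case. Once this is settled, the remaining equivalences are routine element-wise verifications, and the conclusion that weak crossed squares coincide with internal crossed squares then follows from Lemma~\ref{lemma:Lie crossed squares are the same as internal crossed squares in LieAlg}.
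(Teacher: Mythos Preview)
Your proposal is correct and follows essentially the same approach as the paper: an axiom-by-axiom comparison between \X0--\X4 and \W1--\W4, using Proposition~\ref{prop:prop 1 in Ell91} and Proposition~\ref{Tensor for Lie algebras} to pass between Lie pairings and morphisms out of the tensor product, and the explicit formulas for $\pi^{M\otimes N}_M$, $\pi^{M\otimes N}_N$ and ${}^l(m\otimes n)$ recorded in Lemma~\ref{lemma:the non-abelian tensor product of Lie algebras gives us a crossed square}. If anything, you are more careful than the paper at the one delicate point---the equivalence of \X0 with the Lie-pairing axioms, which (as you say) requires \X4 and the fact that the $M$- and $N$-actions are induced via $\mu$ and $\nu$; the paper simply asserts this identification, so your flagging it as the main obstacle is well placed rather than a gap.
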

\begin{proof}
Let us compare condition \X0--\X4 as in Definition~\ref{defi:explicit definition of crossed square of Lie algebras} with condition \W1--\W4 as in Definition~\ref{defi:weak crossed squares via the non-abelian tensor product}.

As follows from Proposition~\ref{prop:prop 1 in Ell91}, having a function $h\colon M\times N\to P$ such that \X0 holds (that is a Lie pairing) is the same as having a morphism $h^*\colon M\otimes N\to P$ (from now on denoted again with $h$).

Notice that \W1 is precisely the internal reformulation of \X1, and hence they are clearly equivalent. The equivalence \X2 $\iff$ \W2 is given by the equivalence between the systems
\begin{align*}
\begin{cases}
\pi_M^{M\otimes N}(m\otimes n)=p_M(h(m\otimes n)),\\ \pi_N^{M\otimes N}(m\otimes n)=p_N(h(m\otimes n)), 
\end{cases}
&&
\begin{cases}
-{}^{n}m=p_M(h(m,n)),\\ {}^{m}n=p_N(h(m,n)), 
\end{cases}
\end{align*}
which in turn is obtained via the explicit description of the maps $\pi_M^{M\otimes N}$ and $\pi_M^{M\otimes N}$ in the Lie algebra case.
Similarly, in order to show \X3 $\iff$ \W3, we use the equivalence between the systems
\begin{align*}
\begin{cases}
\pi_P^{P\otimes N}(p\otimes n)=h((p_N\otimes 1_N)(p\otimes n)),\\ \pi_P^{M\otimes P}(m\otimes p)=h((1_M\otimes p_M)(m\otimes p)),
\end{cases}
&&
\begin{cases}
h(p_M(p),n)=-{}^{n}p,\\
h(m,p_N(p))={}^{m}p.
\end{cases}
\end{align*}
Finally, to show that \X4 $\iff$ \W4, we first consider the action $\xi^L_{M\otimes N}$ as defined in Remark~\ref{rmk:definitions of diagonal action}: in the particular case of Lie algebras it can be described through the equation 
\[
{}^l(m\otimes n)=({}^lm)\otimes n+m\otimes({}^ln).
\]
Then we use the equivalent equalities
\begin{align*}
& & h({}^{l}(m\otimes n))&={}^{l}(h(m\otimes n))\\
& \Leftrightarrow & h({}^lm\otimes n+m\otimes{}^ln)&={}^{l}(h(m\otimes n))\\
& \Leftrightarrow & h({}^lm\otimes n)+h(m\otimes{}^ln)&={}^{l}(h(m\otimes n))\\
& \Leftrightarrow & h({}^lm,n)+h(m,{}^ln)&={}^{l}(h(m,n))
\end{align*}
to finish the proof.
\end{proof}

We return to the context of a semi-abelian category $\A$ that satisfies \SH.

\begin{proposition}
\label{prop:implicit defi implies explicit defi}
An internal crossed square is always a weak crossed square, that is Definition~\ref{defi:implicit defi of crossed square} implies Definition~\ref{defi:weak crossed squares via the non-abelian tensor product}.
\end{proposition}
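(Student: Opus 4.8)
The plan is to present the given internal crossed square, via $\XSqr(\A)\simeq\Grpd^2(\A)$, as an internal double groupoid and to normalise it as in Figure~\ref{Figure:Normalisation}, obtaining the commutative square with corners $P$, $M$, $N$, $L$ and maps $p_M$, $p_N$, $\mu$, $\nu$, equipped with the conjugation-induced $L$-actions on $M$, $N$ and $P$; the action on $P$ is the diagonal action $\xi^L_P$ of Subsection~\ref{rmk:definitions of diagonal action}. I denote this crossed square by $\mathcal{C}$. As the comparison map I take $h\coloneq\phi\colon M\otimes N\to P$ provided by Proposition~\ref{prop:universal property of non-abelian tensor product}, so that $\morph{h & 1_M}{1_N & 1_L}$ is a morphism of internal crossed squares from the tensor product square to $\mathcal{C}$. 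With this choice \W2 and \W4 are immediate: \W2 is precisely the compatibility of $\morph{h & 1_M}{1_N & 1_L}$ with the projections $\pi^{M\otimes N}_M$, $\pi^{M\otimes N}_N$ and the maps $p_M$, $p_N$ (the top and left faces of the defining cube), while \W4 says that the induced morphism of diagonal crossed modules $(M\otimes N\to L,\xi^L_{M\otimes N})\to(P\to L,\xi^L_P)$ from Proposition~\ref{prop:every morphism of internal crossed squares has an underlying morphism of internal crossed modules between the diagonals} is $L$-equivariant, which it is.

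For \W1, the crossed modules $(M\xrightarrow{\mu}L)$ and $(N\xrightarrow{\nu}L)$ are the normalisations of the right and bottom internal groupoids, and $(P\xrightarrow{\lambda}L,\xi^L_P)$ with $\lambda=\mu\comp p_M=\nu\comp p_N$ is a crossed module by Proposition~\ref{prop:the diagonal is a crossed module}. The remaining point, that $p_M$ and $p_N$ are $L$-equivariant, I would obtain by unwinding the double groupoid: writing $p_M$ (respectively $p_N$) as the restriction of a codomain map of one of the internal groupoids in Figure~\ref{Figure:Normalisation}, and using the explicit description of $\xi^L_P$ recalled in Subsection~\ref{rmk:definitions of diagonal action}, equivariance reduces to the fact that these codomain maps are morphisms of internal groupoids, hence intertwine the relevant conjugation actions along $c_D$ and $c_R$; as $c_D\comp e_D=1_D$ and $c_R\comp e_R=1_D$, the correction terms cancel. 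The induced crossed modules $(P\xrightarrow{p_M}M)$ and $(P\xrightarrow{p_N}N)$ of Remark~\ref{rmk:defi of other actions from definition of crossed square} then also follow.

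The heart of the matter is \W3, asking that $h\comp(p_M\otimes 1_N)=\pi^{P\otimes N}_P$ and, symmetrically, $h\comp(1_M\otimes p_N)=\pi^{M\otimes P}_P$; I treat the first. The idea is to compare $\mathcal{C}$ with the canonical internal crossed square $\mathcal{E}$ attached to the crossed module $(P\xrightarrow{p_N}N)$, namely the identity internal crossed module $(\,(P\xrightarrow{p_N}N)\xrightarrow{1}(P\xrightarrow{p_N}N),\chi\,)$ in $\XMod(\A)$. Its normalisation is the square with corners $P$, $P$, $N$, $N$ and maps $1_P$, $p_N$, $1_N$, $p_N$, and its own comparison map $h_{\mathcal{E}}\colon P\otimes N\to P$ from Proposition~\ref{prop:universal property of non-abelian tensor product} satisfies $h_{\mathcal{E}}=\pi^{P\otimes N}_P$, because $\mathcal{E}$ has $1_P$ as its top map, so that the defining compatibility of $h_{\mathcal{E}}$ with the projection reads $h_{\mathcal{E}}=\pi^{P\otimes N}_P$. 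The canonical map from an identity crossed module to a given one supplies a morphism of internal crossed squares $\morph{1_P & p_M}{1_N & \nu}\colon\mathcal{E}\to\mathcal{C}$ (with $f\otimes g=p_M\otimes 1_N$). Feeding this into the naturality of the comparison map then yields $h_{\mathcal{E}}=h\comp(p_M\otimes 1_N)$, and hence $\pi^{P\otimes N}_P=h\comp(p_M\otimes 1_N)$, which is \W3.

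The main obstacle is thus the naturality statement just used: for a morphism of internal crossed squares $\morph{p & f}{g & l}\colon\mathcal{X}\to\mathcal{Y}$ one needs $p\comp h_{\mathcal{X}}=h_{\mathcal{Y}}\comp(f\otimes g)$. I would prove this by exhibiting both $p\comp h_{\mathcal{X}}$ and $h_{\mathcal{Y}}\comp(f\otimes g)$ as the upper-left component of a morphism of internal crossed squares out of the tensor product square of $\mathcal{X}$ into $\mathcal{Y}$ whose other three components are $f$, $g$, $l$ --- the first via $\morph{p & f}{g & l}\comp\morph{h_{\mathcal{X}} & 1}{1 & 1}$, the second via $\morph{h_{\mathcal{Y}} & 1}{1 & 1}$ composed with the functoriality morphism of Proposition~\ref{Proto Tensor Functor} --- and then invoking uniqueness. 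The uniqueness is where the real content sits: since the double groupoid underlying a tensor product is, by Construction~\ref{Construction Tensor}, a pushout in $\Grpd^2(\A)$, a morphism out of it is completely determined by its restrictions to the two legs of the defining span, and these are fixed by the boundary data $(f,g,l)$. This is the same principle behind Remark~\ref{rmk:the morphism of crossed squares to the tensor product one and the map induced from the crossed pairing are the same}, now applied with non-identity boundary; making the restriction-to-the-legs argument precise is the one step that requires genuine care, the rest being bookkeeping with the conjugation actions carried by the double groupoid.
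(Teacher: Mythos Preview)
Your overall plan---normalise the double groupoid, take $h$ from Proposition~\ref{prop:universal property of non-abelian tensor product}, and check \W1--\W4---is exactly the paper's, and your treatment of \W1, \W2, \W4 is fine. In particular, obtaining \W4 directly from Proposition~\ref{prop:every morphism of internal crossed squares has an underlying morphism of internal crossed modules between the diagonals} is cleaner than the paper's explicit verification via the lifted-point diagram.

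There is, however, a genuine gap in your \W3 argument: a base mismatch. Your auxiliary square $\mathcal{E}$, the identity crossed module on $(P\xrightarrow{p_N}N)$, is a crossed square over the base $N$, not over $L$. Hence its universal comparison map $h_{\mathcal{E}}$ has domain $P\otimes_{N}N$, and the map ``$p_M\otimes 1_N$'' produced by your naturality step (with $l=\nu$) is the base-changing map $P\otimes_{N}N\to M\otimes_{L}N$ of Proposition~\ref{Proto Tensor Functor}. What \W3 asks for is an equality of maps out of $P\otimes_{L}N$, where $P$ carries its $L$-crossed module structure $\lambda\colon P\to L$ and $p_M\otimes 1_N\colon P\otimes_{L}N\to M\otimes_{L}N$ is the bifunctor of Corollary~\ref{cor:the non-abelian tensor product is a bifunctor} with $l=1_L$. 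Your argument proves the wrong statement: it identifies $\pi^{P\otimes_N N}_{P}$ with $h$ composed with the base-changing $p_M\otimes 1_N$, and there is no reason for the comparison $P\otimes_{N}N\to P\otimes_{L}N$ to be epimorphic, so you cannot cancel.

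The paper's route avoids this by never leaving the base $L$. It applies the universal property not to an auxiliary square like $\mathcal{E}$, but to the tensor square of $M$ and $P$ over $L$ itself: both $\pi^{M\otimes P}_{P}$ and $h\comp(1_M\otimes p_N)$ are shown to be the top-left component of a morphism of crossed squares from the $M\otimes_{L}P$ tensor square into $\mathcal{C}$ with the same boundary $(1_M,p_N,1_L)$, and uniqueness (your pushout argument, which the paper uses implicitly) gives the equality. The fix for your proof is to drop $\mathcal{E}$ and argue directly with the $P\otimes_{L}N$ (or $M\otimes_{L}P$) tensor square over $L$; your naturality lemma, specialised to $l=1_L$, then does the job.
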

\begin{proof}
Consider a normalisation of an internal double groupoid as in Figure~\ref{FigNormalisation}. 
\begin{figure}

\resizebox{.35\textwidth}{!}
{\xymatrix@!0@=6em{
P \ar@{{ |>}->}[r]^-{k_{d_T}} \ar@{.>}[rd]|-{k} \ar@{{ |>}->}[d]_-{k_{d_W}} & P\rtimes M \ar@<1ex>[r]^-{d_T} \ar@<-1ex>[r]_-{c_T} \ar@{{ |>}->}[d]^-{k_{d_L}} & M \ar[l]|-{e_T} \ar@{{ |>}->}[d]^-{k_{d_R}}\\
P\rtimes N \ar@<1ex>[d]^-{d_W} \ar@<-1ex>[d]_-{c_W} \ar@{{ |>}->}[r]_-{k_{d_U}} & Q_P \ar@<1ex>[d]^-{d_L} \ar@<-1ex>[d]_-{c_L} \ar@<1ex>[r]^-{d_U} \ar@<-1ex>[r]_-{c_U} \ar@<3pt>@{.>}[dr]^-{d} \ar@<-3pt>@{.>}[dr]_-{c} & M\rtimes L \ar[l]|-{e_U} \ar@<1ex>[d]^-{d_R} \ar@<-1ex>[d]_-{c_R}\\
N \ar[u]|-{e_W} \ar@{{ |>}->}[r]_-{k_{d_D}} & N\rtimes L \ar[u]|-{e_L} \ar@<1ex>[r]^-{d_D} \ar@<-1ex>[r]_-{c_D} & L \ar[u]|-{e_R} \ar[l]|-{e_D} \ar@{.>}[lu]|-{e}
}}
\caption{An internal double groupoid and its normalisation}\label{FigNormalisation}
\end{figure}
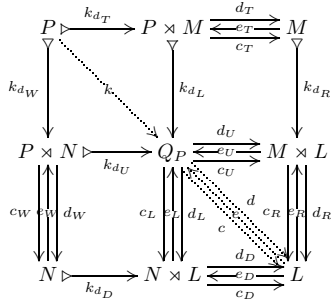
Let us start by fixing the basic ingredients. We define the maps $p_M\coloneq c_T\comp k_{d_T}$, $p_N\coloneq c_W\comp k_{d_W}$ and $\lambda\coloneq c\comp k$. The actions $\xi^L_M$ and $\xi^L_N$ are already given, whereas $\xi^L_P$ and $\xi^L_{M\otimes N}$ are constructed as in~\eqref{diag:definition of diagonal groupoid} and $h\colon M\otimes N\to P$ is given by Proposition~\ref{prop:universal property of non-abelian tensor product}. Now we are ready to show the properties of these objects.

For \W1, we already know by hypothesis that $(M\xrightarrow{\mu}L,\xi^L_M)$ and $(N\xrightarrow{\nu}L,\xi^L_N)$ are crossed modules. The fact that also $(P\xrightarrow{\lambda}L,\xi^L_P)$ is so, is given by Proposition~\ref{prop:the diagonal is a crossed module}. It remains to be shown that $p_M\colon P\to M$ is equivariant with respect to these actions; then for $p_N$ the reasoning is entirely similar. Consider the diagram
\[
\resizebox{.3\textwidth}{!}
{\xymatrix@!0@=6em{
P \ar@/_2pc/[dd]_-{p_M} \ar@{{ |>}->}[d]_-{l} \ar[rd]|-{k} \ar@{{ |>}->}[r]^-{k_{\Hat{d}}} & \widehat{Q_P} \ar[d]_-{l\rtimes 1_L} \ar@<.5ex>[r]^-{\Hat{d}} & L \ar@{=}[d] \ar@<.5ex>[l]^-{\Hat{e}}\\
K_d \ar@{.>}[d]_-{\phi} \ar@{{ |>}->}[r]^-{k_d} & Q_P \ar[d]_-{d_U} \ar@<.5ex>[r]^-{d} & L \ar@{=}[d] \ar@<.5ex>[l]^-{e}\\
M \ar@{{ |>}->}[r]_-{k_{d_R}} & M\rtimes L \ar@<.5ex>[r]^-{d_R} & L \ar@<.5ex>[l]^-{e_R}
}}
\]
where the two top squares are the ones defining the action $\xi^L_P$, whereas the dotted map is induced by the fact that $M$ is the kernel of $d_R$. In order to show that $(p_M,1_L)$ is a morphism of split extensions from the top row to the bottom one (and hence an equivariant map), it suffices that $p_M=\phi\comp l$, since each square commutes: this is done using the chain of equalities $k_{d_R}\comp \phi\comp l=d_U\comp k_d\comp l=d_U\comp k=k_{d_R}\comp p_M$ and the fact that $k_{d_R}$ is a monomorphism.

Condition \W2 is already given by definition of the map $h$. In order to show \W3 it suffices that 
\[
\resizebox{.5\textwidth}{!}
{\xymatrix@!0@=2.5em{
M\otimes P \ar[rr]^-{\pi^{M\otimes P}_M} \ar[dd]_-{\pi^{M\otimes P}_P} && M \ar[dd]^-{\mu} &&&&& P \ar[rr]^-{p_M} \ar[dd]_-{p_N} && M \ar[dd]^-{\mu}\\
&&& \ar@<.5ex>[rrr]^-{\morph{\pi^{M\otimes P}_P & 1_M}{p_N& 1_L}} \ar@<-.5ex>[rrr]_-{\morph{h\circ(1_M\otimes p_N) & 1_M}{p_N &1_L}} &&& \\
P \ar[rr]_-{\lambda} && L &&&&& N \ar[rr]_-{\nu} && L
}}
\]
are both morphisms of crossed squares, so that the claim follows from Proposition~\ref{prop:universal property of non-abelian tensor product}: the universal property of $M\otimes P$ (and similarly for $P\otimes N$). The map $\pi^{M\otimes P}_P$ clearly satisfies the universal property depicted in Proposition~\ref{prop:universal property of non-abelian tensor product} and therefore it induces the morphism of crossed squares on the top. The second one is obtained as the composition
\[
\morph{h\circ(1_M\otimes p_N)& 1_M}{p_N& 1_L}=\morph{h& 1_M}{1_P &1_L}\comp \morph{1_M\otimes p_N & 1_M}{p_N & 1_L}.
\]
The first one is a morphism of crossed squares by definition of $1\otimes p_N$, whereas the second one is so by definition of $h$ (and by Remark~\ref{rmk:the morphism of crossed squares to the tensor product one and the map induced from the crossed pairing are the same}).

It remains to be shown that \W4 holds and to do so, consider Figure~\ref{Fig:(iv')}.
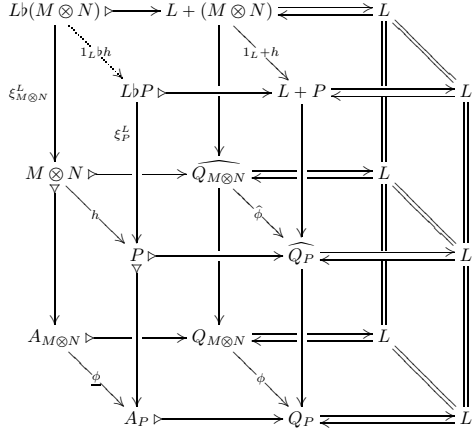
\begin{figure}

\resizebox{.5\textwidth}{!}{
\xymatrix@!0@=4.5em{
L\flat(M\otimes N) 
\ar@{{ |>}->}[rr]
\ar[dd]_-{\xi^L_{M\otimes N}}
\ar@{.>}[rd]|-{1_L\flat h}
&&
L+(M\otimes N)
\ar@<2pt>[rr]
\ar[dd]|-{\hole}
\ar[rd]|-{1_L+h}
&&
L
\ar@<2pt>[ll]
\ar@{=}[dd]|-{\hole}
\ar@{=}[rd]
\\
&
L\flat P
\ar@{{ |>}->}[rr]
\ar[dd]_(0.25){\xi^L_P}
&&
L+P
\ar@<2pt>[rr]
\ar[dd]
&&
L
\ar@<2pt>[ll]
\ar@{=}[dd]
\\
M\otimes N
\ar@{{ |>}->}[dd]
\ar@{{ |>}->}[rr]|(.50){\hole}
\ar[rd]|-{h}
&&
\widehat{Q_{M\otimes N}}
\ar[dd]|(.50){\hole}
\ar@<2pt>[rr]|(.50){\hole}
\ar[rd]|-{\widehat{\phi}}
&&
L
\ar@{=}[dd]|(.50){\hole}
\ar@<2pt>[ll]|(.50){\hole}
\ar@{=}[rd]
\\
&
P
\ar@{{ |>}->}[dd]
\ar@{{ |>}->}[rr]
&&
\widehat{Q_P}
\ar[dd]
\ar@<2pt>[rr]
&&
L
\ar@{=}[dd]
\ar@<2pt>[ll]
\\
A_{M\otimes N}
\ar@{{ |>}->}[rr]|(.50){\hole}
\ar[rd]|-{\underline{\phi}}
&&
Q_{M\otimes N}
\ar[rd]|-{\phi}
\ar@<2pt>[rr]|(.50){\hole}
&&
L
\ar@<2pt>[ll]|(.50){\hole}
\ar@{=}[rd]
\\
&
A_P
\ar@{{ |>}->}[rr]
&&
Q_P
\ar@<2pt>[rr]
&&
L
\ar@<2pt>[ll]
}}

\caption{Checking condition \W4}\label{Fig:(iv')}
\end{figure}
We want to prove that the top square in the left face commutes. Notice that by definition of $\phi$ and $\underline{\phi}$ we already know that the squares on the bottom face commute, and similarly, by definition of $h$ the bottom square on the left face commutes. The two lower cubes then commute by construction of $\widehat{Q_{M\otimes N}}$, $\widehat{Q_P}$ and $\Hat{\phi}$ (see Lemma~2.6 in~\cite{CGVdL15a}). This means that $(h,1_L)$ is a morphism between the lifted points and therefore $h$ is equivariant.
\end{proof}

\subsection{When is a weak crossed square a crossed square?}

\begin{figure}

\resizebox{.5 \textwidth}{!}{
\xymatrix@!0@=4.5em{
M\otimes N 
\ar@{{ |>}->}[rr]
\ar@{{ |>}->}[dd]
\ar@{-{ >>}}[rd]|-{h}
&&
(M\otimes N)\rtimes M
\ar@<3pt>[rr]
\ar@<-3pt>[rr]
\ar@{{ |>}->}[dd]|-{\hole}
\ar@{-{ >>}}[rd]|-{h\rtimes 1_M}
&&
M
\ar[ll]
\ar@{{ |>}->}[dd]|-{\hole}
\ar@{=}[rd]
\\
&
P
\ar@{{ |>}->}[rr]
\ar@{{ |>}->}[dd]
&&
P\rtimes M
\ar@<3pt>[rr]
\ar@<-3pt>[rr]
\ar@{{ |>}.>}[dd]^(.3){\alpha}
&&
M
\ar[ll]
\ar@{{ |>}->}[dd]
\\
(M\otimes N)\rtimes N
\ar@<3pt>[dd]
\ar@<-3pt>[dd]
\ar@{{ |>}->}[rr]|(.50){\hole}
\ar@{-{ >>}}[rd]|-{h\rtimes 1_N}
&&
Q_{M\otimes N}
\ar@<3pt>[dd]|(.50){\hole}
\ar@<-3pt>[dd]|(.50){\hole}
\ar@<3pt>[rr]|(.50){\hole}
\ar@<-3pt>[rr]|(.50){\hole}
\ar@{.>}[rd]|-{\Tilde{h}}
&&
M\rtimes L
\ar@<3pt>[dd]|(.50){\hole}
\ar@<-3pt>[dd]|(.50){\hole}
\ar[ll]|(.50){\hole}
\ar@{=}[rd]
\\
&
P\rtimes N
\ar@<3pt>[dd]
\ar@<-3pt>[dd]
\ar@{{ |>}.>}[rr]_(.3){\beta}
&&
Q'
\ar@{.>}@<3pt>[dd]
\ar@{.>}@<-3pt>[dd]
\ar@{.>}@<3pt>[rr]
\ar@{.>}@<-3pt>[rr]
&&
M\rtimes L
\ar@<3pt>[dd]
\ar@<-3pt>[dd]
\ar@{.>}[ll]
\\
N
\ar[uu]
\ar@{{ |>}->}[rr]|(.50){\hole}
\ar@{=}[rd]
&&
N\rtimes L
\ar[uu]|(.50){\hole}
\ar@{=}[rd]
\ar@<3pt>[rr]|(.50){\hole}
\ar@<-3pt>[rr]|(.50){\hole}
&&
L
\ar[uu]|(.50){\hole}
\ar[ll]|(.50){\hole}
\ar@{=}[rd]
\\
&
N
\ar[uu]
\ar@{{ |>}->}[rr]
&&
N\rtimes L
\ar@{.>}[uu]
\ar@<3pt>[rr]
\ar@<-3pt>[rr]
&&
L
\ar[uu]
\ar[ll]
}}

\caption{The crossed square induced by a regular epic $h$}\label{Fig:CrossedSquareByh}
\end{figure}
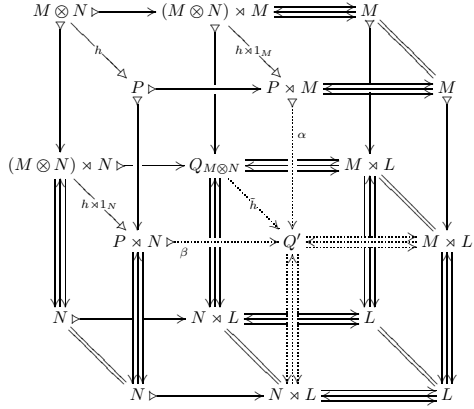

It remains an open question whether the converse of Proposition~\ref{prop:implicit defi implies explicit defi} holds; a stronger condition on the base category $\A$ might be necessary for this to be the case. 

We have a partially positive answer in the situation where $h$ happens to be a regular epimorphism: such a weak crossed square is always a crossed square, as soon as in the induced diagram of Figure~\ref{Fig:CrossedSquareByh}, the kernel of~$h$ is normal in $Q_{M\otimes N}$. 

Note, however, that examples of crossed squares exist where the induced $h$ is \emph{not} a regular epimorphism---see for instance Subsection~\ref{SubsecNormalSubobjects}. For this reason, what follows here can only ever be a partial answer to the question. 

As it turns out, a double groupoid as in Figure~\ref{FigNormalisation} gives rise to a regular epimorphic~$h$ (whose kernel is necessarily normal in $Q_{M\otimes N}$) if and only if the morphisms $e_L$ and $e_U$ are jointly extremal-epimorphic. Indeed, the latter condition holds if and only if the morphism $\Tilde{h}$ in Figure~\ref{Fig:CrossedSquareByh} is a regular epimorphism. We may then use the idea contained in the following remark.

\begin{remark}
\label{rmk:if h is regular epimorphism then tilde(h)=coker(ker(h))}
Suppose for the moment that the front face in Figure~\ref{Fig:CrossedSquareByh} is already an internal crossed square. Then both squares in the diagram
\begin{equation}
\label{diag:the outer rectangle is a pushout}
\vcenter{
\xymatrix{
M\otimes N \pullbackcorner \ar[r] \ar@{-{ >>}}[d]_-{h} & (M\otimes N)\rtimes M \pullbackcorner \ar@{-{ >>}}[d]|-{h\rtimes 1_M} \ar[r] & Q_{M\otimes N} \ar@{-{ >>}}[d]^-{\Tilde{h}}\\
P \ar[r] & P\rtimes M \ar[r]_-{\alpha} & Q'
}
}
\end{equation}
are pullbacks (by item 1.\ of Lemma~4.2.4 in~\cite{BB04}) and hence the outer rectangle is so. By item 2.\ of Lemma~4.2.4 in~\cite{BB04}, this implies that $K_h\cong K_{\Tilde{h}}$, but since~$\Tilde{h}$ is a regular epimorphism if and only if so is $h$ (by applying the Short $5$-Lemma twice), it is the cokernel of its kernel: this means that~$Q'$ can be described as the cokernel of the inclusion of $K_h$ into $Q_{M\otimes N}$. Furthermore, this inclusion is normal.
\end{remark}

Conversely, when $h$ is a regular epimorphism and the kernel of~$h$ is normal in $Q_{M\otimes N}$, we can construct the object $Q'$ and the dotted arrows in Figure~\ref{Fig:CrossedSquareByh} so that the double reflexive graph in the front face is an internal double groupoid:

\begin{theorem}
\label{prop:other implication in the regular epimorphism case}
In a semi-abelian category that satisfies \SH, a weak crossed square where $h$ is a regular epimorphism is also an internal crossed square---that is, Definition~\ref{defi:weak crossed squares via the non-abelian tensor product} implies Definition~\ref{defi:implicit defi of crossed square} in that case---as soon as in the induced diagram of Figure~\ref{Fig:CrossedSquareByh}, the kernel of $h$ is normal in $Q_{M\otimes N}$.
\end{theorem}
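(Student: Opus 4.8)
The plan is to exhibit the given weak crossed square as the normalisation of an internal double groupoid; by Definition~\ref{defi:implicit defi of crossed square} and the equivalence $\XSqr(\A)\simeq\Grpd^2(\A)$, this is exactly what must be produced. The double groupoid in question is the front face of Figure~\ref{Fig:CrossedSquareByh}, obtained by pushing the double groupoid of Construction~\ref{Construction Tensor} (the back face, with central object $Q_{M\otimes N}$) forward along a regular epimorphism $\widetilde{h}\colon Q_{M\otimes N}\to Q'$. Since $K_h$ is normal in $Q_{M\otimes N}$ by hypothesis, I set $Q'\coloneq Q_{M\otimes N}/K_h$ and let $\widetilde{h}$ be the quotient map, a regular epimorphism with kernel $K_h$; this choice is forced by Remark~\ref{rmk:if h is regular epimorphism then tilde(h)=coker(ker(h))} read in reverse.

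First I would check that the two reflexive-graph structures on $Q_{M\otimes N}$ descend along $\widetilde{h}$. Recall from Figure~\ref{Fig:CrSqWithTensor} that $M\otimes N=K_{\overline{d_U}}\cap K_{\overline{d_L}}$ sits inside $Q_{M\otimes N}$ and that $K_h\leq M\otimes N$; hence the domain maps $\overline{d_U}$ and $\overline{d_L}$ already vanish on $K_h$. For the codomain maps I use condition \W{2}: on $M\otimes N$ the maps $\overline{c_U}$ and $\overline{c_L}$ restrict to the projections $\pi^{M\otimes N}_M=p_M\comp h$ and $\pi^{M\otimes N}_N=p_N\comp h$, both of which vanish on $K_h=\ker h$. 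Thus $K_h$ lies in the kernels of all four structure maps, and normality of $K_h$ lets each of them factor through $Q'$, producing a double reflexive graph with centre $Q'$ whose bottom and right edges are the unchanged groupoids $N\rtimes L\rightrightarrows L$ and $M\rtimes L\rightrightarrows L$.

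Next I would verify that this double reflexive graph is a double groupoid. As $\widetilde{h}$ is a regular epimorphism with $K_h$ contained in each of $K_{\overline{d_U}}$, $K_{\overline{c_U}}$, $K_{\overline{d_L}}$, $K_{\overline{c_L}}$, the kernels of the descended maps are precisely the $\widetilde{h}$-images of the kernels upstairs; item~2 of Proposition~\ref{Higgins properties} then gives $[K_{d_U'},K_{c_U'}]=\widetilde{h}[K_{\overline{d_U}},K_{\overline{c_U}}]=0$, and likewise for the lower graph, because the back face is a groupoid. By Lemma~\ref{lemma: mult struc <=> HUQ=0} both descended reflexive graphs are groupoids, and since the bottom and right edges already are, Proposition~\ref{prop:the previous construction is an instance of the reflector} shows that no further reflection is needed: the front face is a genuine double groupoid and $\widetilde{h}$ is a morphism of double groupoids.

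Finally I would normalise and identify the result with the given square. The top-left corner of the normalisation is $K_{\langle d_U',d_L'\rangle}$; since $\langle d_U',d_L'\rangle\comp\widetilde{h}=\langle\overline{d_U},\overline{d_L}\rangle$ has kernel $M\otimes N$ while $\widetilde{h}$ is a regular epimorphism with $K_h\leq M\otimes N$, this kernel is $\widetilde{h}(M\otimes N)=(M\otimes N)/K_h\cong P$. Writing $j$ for the inclusion $M\otimes N\hookrightarrow Q_{M\otimes N}$ and using that the induced monomorphism $P\hookrightarrow Q'$ satisfies $(P\hookrightarrow Q')\comp h=\widetilde{h}\comp j$, the induced projection to $M$ obeys $\iota_M\comp(\mathrm{proj})\comp h=\overline{c_U}\comp j=\iota_M\comp p_M\comp h$, so cancelling the monomorphism $\iota_M$ and the epimorphism $h$ identifies it with $p_M$; symmetrically for $p_N$. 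Hence the normalisation is exactly the given square with its projections. The remaining, and most delicate, point is to confirm that the crossed-square structure induced by this double groupoid—the diagonal $L$-action on $P$ of Proposition~\ref{prop:the diagonal is a crossed module} together with the induced pairing—coincides with the data $\xi^L_P$ and $h$ of the weak crossed square. For this I would use that $\widetilde{h}$ is an equivariant morphism of double groupoids, condition \W{4} (equivariance of $h$), and Proposition~\ref{prop:every morphism of internal crossed squares has an underlying morphism of internal crossed modules between the diagonals}, so that the induced action and pairing are transported along the epimorphism $h$ onto the prescribed ones. Matching this full structure, rather than merely the underlying square, is where the real work lies; once it is in place, Definition~\ref{defi:implicit defi of crossed square} is satisfied and the weak crossed square is an internal crossed square.
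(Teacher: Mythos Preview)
Your overall strategy coincides with the paper's: form $Q'\coloneq Q_{M\otimes N}/K_h$, descend the double reflexive graph along the regular epimorphism~$\widetilde{h}$, check that the four edges are groupoids via Proposition~\ref{Higgins properties}, and then normalise. Your identification of the corner as $(M\otimes N)/K_h\cong P$ and of the projections as $p_M$, $p_N$ (using \W2) is correct. One small point: you do not need Proposition~\ref{prop:the previous construction is an instance of the reflector} here; once the four edges are groupoids and the nine commutativity squares descend along the epimorphism~$\widetilde{h}$, Proposition~4.3 already tells you the front face is a double groupoid.

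Where you diverge from the paper is the step you yourself flag as ``where the real work lies''. You propose to match the diagonal $L$-action and the pairing via \W4 and Proposition~\ref{prop:every morphism of internal crossed squares has an underlying morphism of internal crossed modules between the diagonals}; this is plausible (since $1_L\flat h$ is a regular epimorphism, equivariance of $h$ for two $L$-actions forces them to agree), but it addresses \emph{derived} data rather than the primitive data of an object of $\XMod^2(\A)$, namely the $M$- and $N$-actions on~$P$. The paper instead attacks this head-on: it builds explicit comparison maps $\alpha\colon P\rtimes_{\xi^M_P}M\to Q'$ and $\beta\colon P\rtimes_{\xi^N_P}N\to Q'$ out of the coequaliser description of the semidirect product, checks all the squares in Figure~\ref{Fig:CrossedSquareByh}, and then proves $\alpha=k_{d'_L}$ and $\beta=k_{d'_U}$. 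It is precisely here---not only in the formation of $Q'$---that the normality of $K_h$ in $Q_{M\otimes N}$ is used a second time: it makes the outer rectangle~\eqref{diag:the outer rectangle is a pushout} a pullback (Lemma~4.2.4 of~\cite{BB04}), whence the right-hand square is a pullback and $\alpha$ is a normal monomorphism. Your sketch does not obviously supply this second use of the hypothesis, so as written the argument has a gap at the point you already identified as delicate; to close it you would either have to carry your equivariance idea through to the $M$- and $N$-actions, or reproduce the $\alpha,\beta$ construction of the paper.
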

\begin{proof}
By using the idea in the previous remark we define $Q'$ as the cokernel of $\gamma\comp k_h$, where $\gamma$ is the composition depicted in the first row of~\eqref{diag:the outer rectangle is a pushout}. In particular we obtain that 
\begin{equation}
\label{diag:Q' comes from a pushout}
\vcenter{
\xymatrix{
M\otimes N \ar[r]^-{\gamma} \ar@{-{ >>}}[d]_-{h} & Q_{M\otimes N} \ar@{-{ >>}}[d]^-{\Tilde{h}}\\
P \ar[r]_-{\gamma'} & Q' \pushoutcorner
}
}
\end{equation}
is a pushout. Since $Q'$ is the cokernel of $\gamma\comp k_h$, from $d_U\comp\gamma=0=d_L\comp\gamma$ we find unique morphisms $d'_U\colon Q'\to M\rtimes L$, $d'_L\colon Q'\to N\rtimes L$ such that $d'_U\comp \Tilde{h}=d_U$ and $d'_L\comp \Tilde{h}=d_L$. Similarly, by using the universal property of the pushout~\eqref{diag:Q' comes from a pushout} we obtain unique morphisms $c'_U\colon Q'\to M\rtimes L$, $c'_L\colon Q'\to N\rtimes L$ such that $c'_U\comp \Tilde{h}=c_U$ and $c'_L\comp \Tilde{h}=c_L$. Then we define $e'_U\coloneq\Tilde{h}\comp e_U$ and $e'_L\coloneq\Tilde{h}\comp e_L$. With these data we already have that $(Q',M\rtimes L,d'_U,c'_U,e'_U)$ and $(Q',N\rtimes L,d'_L,c'_L,e'_L)$ are reflexive graphs. Since they are quotients of groupoids, they are groupoids as well. In particular, the square of groupoids involving them is a double groupoid: this can be shown by proving the commutativity of each of the nine squares by using the fact that $\Tilde{h}$ is a regular epimorphism.

We still need to construct morphisms $\alpha\colon P\rtimes M\to Q'$ and $\beta\colon P\rtimes N\to Q' $ making Figure~\ref{Fig:CrossedSquareByh} commute, and show that $\alpha=k_{d'_L}$ and $\beta=k_{d'_U}$. We are going to construct $\alpha$ only, since a symmetric strategy works for $\beta$.
Let us first of all notice that the square
\begin{equation}
\label{diag:a commutative square for the construction of alpha}
\vcenter{
\xymatrixcolsep{3pc}
\xymatrix{
M+(M\otimes N) \ar[r]^-{\binom{e_U\comp k^L_M}{\gamma}} \ar[d]_-{1+h} & Q_{M\otimes N} \ar[d]^-{\Tilde{h}}\\
M+P \ar[r]_-{\binom{e'_U\comp k^L_M}{\gamma'}} & Q'
}
}
\end{equation}
is commutative by definition of $e_U'$ and the commutativity of~\eqref{diag:Q' comes from a pushout}. Also the triangle 
\begin{equation}
\label{diag:a commutative triangle for the construction of alpha}
\vcenter{
\xymatrix{
M+(M\otimes N) \ar[rd]_-{\binom{e_U\comp k^L_M}{\gamma}} \ar[rr]^-{\sigma_{\xi^M_{M\otimes N}}} && (M\otimes N)\rtimes M \ar[ld]^-{k_{d_L}}\\
& Q_{M\otimes N}
}
}
\end{equation}
commutes, since
\[
\vcenter{
\xymatrix{
M \ar[d]_-{k^L_M} \ar[r]^-{e_T} & (M\otimes N)\rtimes M \ar[d]^-{k_{d_L}}\\
M\rtimes L \ar[r]_-{e_U} & Q_{M\otimes N}
}
}
\quad\text{and}\quad
\vcenter{
\xymatrix{
M\otimes N \ar[rd]_-{\gamma} \ar[rr]^-{k_{d_T}} && (M\otimes N)\rtimes M \ar[dl]^-{k_{d_L}}\\
& Q_{M\otimes N}
}
}
\]
do. Now we can use the definition of the semidirect product $P\rtimes M$ as a coequaliser to obtain the dotted arrow $\alpha$ in the commutative diagram of solid arrows
\[
\resizebox{.8\textwidth}{!}
{\xymatrix@!0@C=8em@R=4em{
M\flat(M\otimes N)
 \ar@{-{ >>}}[dd]_{1_M\flat h}
 \ar@<.5ex>[rr]^-{k_{M,M\otimes N}}
 \ar@<-.5ex>[rr]_-{i_{M\otimes N}\circ\xi^M_{M\otimes N}}
&&
M+(M\otimes N)
 \ar@{-{ >>}}[dd]_-{1_M+h}
 \ar[rr]^-{\sigma_{\xi^M_{M\otimes N}}}
 \ar[rd]_-{\binom{e_U\circ k^L_M}{\gamma}}
&&
(M\otimes N)\rtimes M
 \ar[dd]^-{h\rtimes 1_M}
 \ar[ld]^-{k_{d_L}}
\\
&&&
Q_{M\otimes N}
\ar[dd]^(.3){\Tilde{h}}
\\
M\flat P
 \ar@<.5ex>[rr]^-{k_{M,P}}
 \ar@<-.5ex>[rr]_-{i_P\circ\xi^M_P}
&&
M+P
 \ar[rr]^(.3){\sigma_{\xi^M_P}}|-{\hole}
 \ar[rd]_-{\binom{e'_U\circ k^L_M}{\gamma'}}
&&
P\rtimes M.
 \ar@{.>}[ld]^-{\alpha}
\\
&&&
Q'
}}
\]
In particular we need to show that $\binom{e'_U\circ k^L_M}{\gamma'}$ coequalises $k_{M,P}$ and $i_P\comp \xi^M_P$: this is done by precomposing with the regular epimorphism $1_M\flat h$ and by using the commutativity of \eqref{diag:a commutative square for the construction of alpha} and \eqref{diag:a commutative triangle for the construction of alpha}.
In a similar way we build $\beta\colon P\rtimes N\to Q'$. Let us now show that every square in Figure~\ref{Fig:CrossedSquareByh} involving $\alpha$ and $\beta$ commutes. Indeed, we already know that the square
 \[
 \xymatrix{
 (M\otimes N)\rtimes M \ar[r]^-{k_{d_L}} \ar[d]_-{h\rtimes 1} & Q_{M\otimes N} \ar[d]^-{\Tilde{h}}\\
 P\rtimes M \ar[r]_-{\alpha} & Q'
 }
 \]
 commutes by construction and similarly for the one involving $\beta$; the square 
 \[
 \xymatrix{
 P \ar[rd]|-{\gamma'} \ar[r]^-{k^M_P} \ar[d]_-{k^N_P} & P\rtimes M \ar[d]^-{\alpha}\\
 P\rtimes N \ar[r]_-{\beta} & Q'
 }
 \]
 commutes by construction of $\alpha$ and $\beta$; finally we need to show that the two right-pointing squares and the left-pointing square in
 \[
 \xymatrixrowsep{3pc}
 \xymatrixcolsep{3pc}
 \xymatrix{
 P\rtimes M \ar[d]_-{\alpha} \ar@<1ex>[r]^-{d^M_P} \ar@<-1ex>[r]_-{c^M_P} & M \ar[l]|-{e^M_P} \ar[d]^-{k^L_M}\\
 Q' \ar@<1ex>[r]^-{d'_U} \ar@<-1ex>[r]_-{c'_U} & M\rtimes L \ar[l]|-{e'_U}
 }
 \]
 commute. For the left-pointing one we have the chain of equalities
 \begin{align*}
 \alpha\comp e^M_P=\alpha\comp\sigma_{\xi^M_P}\comp i_M=\binom{e'_U\circ k^L_M}{\gamma'}\comp i_M=e'_U\comp k^L_M,
 \end{align*}
 whereas for the right-pointing ones we need to compose with the regular epimorphism~$\sigma_{\xi^M_P}$ to obtain
 \begin{align*}
 d'_U\comp\alpha\comp\sigma_{\xi^M_P}&=\binom{d'_U\circ e'_U\comp k^L_M}{d'_U\circ\gamma'}=\binom{k^L_M}{0}=k^L_M\binom{1_M}{0}
 =k^L_M\comp d^M_P\comp\sigma_{\xi^M_P},\\
 c'_U\comp\alpha\comp\sigma_{\xi^M_P}&=\binom{c'_U\circ e'_U\circ k^L_M}{c'_U\circ\gamma'}=\binom{k^L_M}{k^L_M\circ p_M}=k^L_M\comp \binom{1_M}{p_M}
 =k^L_M\comp c^M_P\comp\sigma_{\xi^M_P}.
 \end{align*}
 Finally we can repeat this argument for the corresponding squares involving $\beta$.
 
It remains to be shown that $\alpha=k_{d'_L}$ (and similarly that $\beta=k_{d'_U}$): to do this, we first show that $d'_L$ is the cokernel of $\alpha$ and then that $\alpha$ is a normal monomorphism, which implies the claim. The first step is easily done by checking the universal property of the cokernel through the diagram
\[
\xymatrix{
(M\otimes N)\rtimes M \ar[r]^-{k_{d_L}} \ar[d]_-{h\rtimes 1_M} & Q_{M\otimes N} \ar[r]^-{d_L} \ar[d]_-{\Tilde{h}} & N\rtimes L \ar@{=}[d]\\
P\rtimes M \ar[r]_-{\alpha} & Q' \ar[r]_-{d'_L} & N\rtimes L
}
\]
and the universal property of the cokernel $d_L$.

If the kernel of $h$ is normal in~$Q_{M\otimes N}$, then~\eqref{diag:Q' comes from a pushout}---which is precisely the outer rectangle in~\eqref{diag:the outer rectangle is a pushout}---is a pullback by~\cite[Lemma~4.2.4]{BB04}. For the same reason, also the left hand side square in~\eqref{diag:the outer rectangle is a pushout} is a pullback. Since $h\rtimes 1_M$ is a regular epimorphism, Proposition~4.1.4 in~\cite{BB04} now implies that the right hand side square in~\eqref{diag:the outer rectangle is a pushout} is a pullback. Since $\A$ is protomodular, pullbacks reflect monomorphisms; since $k_{d_L}$ is a monomorphism, so is~$\alpha$. Furthermore, $\alpha$ is normal as a direct image of the normal monomorphism $k_{d_L}$, which implies our claim that $\alpha$ is the kernel of $d'_L$. 
\end{proof}

\section*{Acknowledgements}
We would like to thank Alan Cigoli, Marino Gran, Manfred Hartl, Sandra Mantovani, Giuseppe Metere, Andrea Montoli and Cyrille S.\ Simeu for many fruitful discussions on the subject of the article. Special thanks to Diana Rodelo for a precious remark concerning the proof of Theorem~\ref{prop:other implication in the regular epimorphism case}, and to the referee for careful reading and providing us with numerous suggestions which helped improve the paper. The first author thanks the Université catholique de Louvain for its kind hospitality during his stays in Louvain-la-Neuve. The second author is grateful to the Università degli Studi di Milano and the Università degli Studi di Palermo for their kind hospitality during his stays in Milan and in Palermo.


\providecommand{\bysame}{\leavevmode\hbox to3em{\hrulefill}\thinspace}
\providecommand{\MR}[1]{}
\providecommand{\MRhref}[2]{%
  \href{http://www.ams.org/mathscinet-getitem?mr=#1}{#2}
}
\providecommand{\href}[2]{#2}

\end{document}